\def\RSthmtxt{theorem~}\newref{thm}{name = \RSthmtxt}}
\def\RSlemtxt{lemma~}\newref{lem}{name = \RSlemtxt}}
\numberwithin{equation}{section}
\numberwithin{figure}{section}
\numberwithin{table}{section}
\theoremstyle{plain}
\newtheorem{thm}{\protect\theoremname}[section]
  \theoremstyle{remark}
  \newtheorem{rem}[thm]{\protect\remarkname}
  \theoremstyle{definition}
  \newtheorem{defn}[thm]{\protect\definitionname}
  \theoremstyle{plain}
  \newtheorem{lem}[thm]{\protect\lemmaname}
  \theoremstyle{plain}
  \newtheorem{cor}[thm]{\protect\corollaryname}
  \theoremstyle{remark}
  \newtheorem{claim}[thm]{\protect\claimname}
  \theoremstyle{definition}
  \newtheorem{example}[thm]{\protect\examplename}
  \theoremstyle{remark}
  \newtheorem*{acknowledgement*}{\protect\acknowledgementname}
\providecommand{\MR}[1]{}
\setlist[enumerate]{itemsep=5pt,topsep=3pt}
\setlist[enumerate,1]{label=\textup{(}\arabic*\textup{)},ref=\arabic*}
\setlist[enumerate,2]{label=\textup{(}\alph*\textup{)},ref=\theenumi \alph*}
  \providecommand{\acknowledgementname}{Acknowledgement}
  \providecommand{\claimname}{Claim}
  \providecommand{\corollaryname}{Corollary}
  \providecommand{\definitionname}{Definition}
  \providecommand{\examplename}{Example}
  \providecommand{\lemmaname}{Lemma}
  \providecommand{\remarkname}{Remark}
\providecommand{\theoremname}{Theorem}
\begin{document}

\title{Unbounded operators in Hilbert space, duality rules, characteristic
projections, and their applications}

\author{Palle Jorgensen, Erin Pearse and Feng Tian}

\address{(Palle E.T. Jorgensen) Department of Mathematics, The University
of Iowa, Iowa City, IA 52242-1419, U.S.A. }

\email{palle-jorgensen@uiowa.edu}

\urladdr{http://www.math.uiowa.edu/\textasciitilde{}jorgen/}

\address{(Erin P.J. Pearse) Department of Mathematics, California Polytechnic
State University, San Luis Obispo, CA 93407-0403, U.S.A.}

\email{epearse@calpoly.edu}

\urladdr{http://www.calpoly.edu/\textasciitilde{}epearse/}

\address{(Feng Tian) Department of Mathematics, Hampton University, Hampton,
VA 23668, U.S.A.}

\email{feng.tian@hamptonu.edu}

\subjclass[2000]{Primary 47L60, 46N30, 65R10, 58J65, 81S25.}

\keywords{Quantum mechanics, unbounded operator, closable operator, selfadjoint
extensions, spectral theory, reproducing kernel Hilbert space, discrete
analysis, graph Laplacians, distribution of point-masses, Green's
functions. }
\begin{abstract}
Our main theorem is in the generality of the axioms of Hilbert space,
and the theory of unbounded operators. Consider two Hilbert spaces
such that their intersection contains a fixed vector space $\mathscr{D}$.
We make precise an operator theoretic linking between such two Hilbert
spaces when it is assumed that $\mathscr{D}$ is dense in one of the
two; but generally not in the other. No relative boundedness is assumed.
Nonetheless, under natural assumptions (motivated by potential theory),
we prove a theorem where a comparison between the two Hilbert spaces
is made via a specific selfadjoint semibounded operator. Applications
include physical Hamiltonians, both continuous and discrete (infinite
network models), and operator theory of reflection positivity.
\end{abstract}

\maketitle
\tableofcontents{}

\section{\label{sec:Intro}Introduction}

Quantum-mechanical observables, such as Hamiltonians, momentum operators
etc, when realized in quantized physical systems take the form of
selfadjoint operators. The case of positive measurements dictate semibounded
and selfadjoint realization. For this to work, two requirements must
be addressed: (i) choice of appropriate Hilbert space(s); and (ii)
choice of selfadjoint extension. However from the context from physics,
the candidates for observables may only be formally selfadjoint, also
called Hermitian. Hence the second question (ii). Even if the initial
Hermitian operator might have a lower bound, lower bounds for its
selfadjoint extensions is not automatic. There are choices. They dictate
the physics; and conversely. Now, there are families of selfadjoint
extensions which preserve the initial lower bound. This is the extension
theory of Friedrichs and Krein; see e.g., \cite{MR1618628,MR614775}.
Examples include free particles on an interval, particles in a number
of potential fields including delta-like potentials, the one-dimensional
Calogero problem, the Aharonov\textendash Bohm problem (see e.g.,
\cite{MR1353099,MR3282337,MR3391891,MR3474104}), and the relativistic
Coulomb problem; and precise solutions to quantization problems must
flesh out the physical selfadjoint operators their spectral resolutions. 

The setting for our main theorem (\secref{mt}) is a given pair: two
fixed Hilbert spaces, such that their intersection contains a fixed
vector space $\mathscr{D}$. In many applications, when feasible,
it is of interest to make a precise linking between such two Hilbert
spaces when it is assumed that $\mathscr{D}$ is dense in one of the
two; but generally not in the other. It is easy if the two Hilbert
spaces are given as $L^{2}\left(\mu_{i}\right)$ spaces; then the
natural means of comparison is of course via relative absolute continuity
for the two measures; and then the Radon-Nikodym derivative serves
the purpose, \secref{nd}.

Rather, the setting for our main result below is the axioms of Hilbert
space, and the theory of unbounded operators. In this generality,
we will prove theorems where a comparison between the two is made
instead with a specific selfadjoint semibounded operator, as opposed
to a Radon-Nikodym derivative. Of course the conclusions in $L^{2}$
spaces will arise as special cases.

Our motivation comes from any one of a host of diverse applications
where the initial pairs of Hilbert spaces are not given as $L^{2}$
spaces, rather they may be Dirichlet spaces, Sobolev spaces, reproducing
kernel Hilbert spaces (RKHSs), perhaps relative RKHSs; or energy-Hilbert
spaces derived from infinite networks of prescribed resistors; or
they may arise from a host of non-commutative analysis settings, e.g.,
from von Neumann algebras, Voiculescu's free probability theory \cite{MR3535474,MR3262618},
and more.

A particular, but important, special case where the comparison of
two Hilbert spaces arises is in the theory of reflection positivity
in physics. There again, the two Hilbert spaces are linked by a common
subspace, dense in the first. The setting of reflection positivity,
see e.g., \cite{MR1767902,MR887102}, lies at the crossroads of the
theory of representations of Lie groups, on the one hand, and constructive
quantum field theory on the other; here \textquotedblleft reflection
positivity\textquotedblright{} links quantum fields with associated
stochastic (Euclidean) processes. In physics, it comes from the desire
to unify quantum mechanics and relativity, two of the dominating physical
theories in the last century.

In the mathematical physics community, it is believed that Euclidean
quantum fields are easier to understand than relativistic quantum
fields. A subsequent transition from the Euclidean theory to quantum
field theory is then provided by reflection positivity, moving from
real to imaginary time, and linking operator theory on one side to
that of the other. An important tool in the correspondence between
the Euclidean side, and the side of quantum fields is a functorial
correspondence between properties of operators on one side with their
counterparts on the other. A benefit of the study of reflection is
that it allows one to take advantage of associated Gaussian measures
on suitable spaces of distributions; hence the reflection positive
Osterwalder-Schrader path spaces and associated Markov processes;
see \cite{MR887102}. Other applications to mathematical physics include
\cite{MR3076374,MR3370354,MR3395870}, and to Gaussian processes with
singular covariance density \cite{MR3231624,MR2793121}.

Our paper is organized as follows. \secref{set} spells out the setting,
and establishes notation. Let $T$ be an operator between two Hilbert
spaces. In \secref{cp}, we study the projection onto the closure
of graph$\left(T\right)$. We show among other things that, if $T$
is closed, then the corresponding block matrix has vanishing Schur-complements.
We further give a decomposition for general $T$ into a closable and
a singular part. \secref{mt} continues the study of general operators
between two Hilbert spaces; \thmref{H12} is a structure theorem which
applies to this general context. Diverse applications are given in
the remaining 4 sections, starting with Noncommutative Radon-Nikodym
derivatives in \secref{nd}, and ending with applications to discrete
analysis, graph Laplacians on infinite network-graphs with assigned
conductance functions.

\section{\label{sec:set}The setting}

In this section we recall general facts about unbounded operators,
and at the same time we introduce notation to be used later.

Our setting is a fixed separable infinite-dimensional Hilbert space.
The inner product in $\mathscr{H}$ is denoted $\left\langle \cdot,\cdot\right\rangle $,
and we are assuming that $\left\langle \cdot,\cdot\right\rangle $
is linear in the second variable. If there is more than one Hilbert
space, say $\mathscr{H}_{i}$, $i=1,2$, involved, we shall use subscript
notation in the respective inner products, so $\left\langle \cdot,\cdot\right\rangle _{i}$
is the inner product in $\mathscr{H}_{i}$. 

Let $\mathscr{H}_{1}$ and $\mathscr{H}_{2}$ be complex Hilbert spaces.
If $\mathscr{H}_{1}\xrightarrow{\;T\;}\mathscr{H}_{2}$ represents
a linear operator from $\mathscr{H}_{1}$ into $\mathscr{H}_{2}$,
we shall denote 
\begin{equation}
dom\left(T\right)=\left\{ \varphi\in\mathscr{H}_{1}\mid\mbox{\ensuremath{T\varphi} is well-defined}\right\} ,\label{eq:in1}
\end{equation}
the domain of $T$, and 
\begin{equation}
ran\left(T\right)=\left\{ T\varphi\mid\varphi\in dom\left(T\right)\right\} ,\label{eq:in2}
\end{equation}
the range of $T$. The closure of $ran\left(T\right)$ will be denoted
$\overline{ran\left(T\right)}$, and it is called the \emph{closed
range.} 
\begin{rem}
When $dom\left(T\right)$ is dense in $\mathscr{H}_{1}$ (as we standardly
assume), then we write $T:\mathscr{H}_{1}\rightarrow\mathscr{H}_{2}$
or $\mathscr{H}_{1}\xrightarrow{\;T\;}\mathscr{H}_{2}$ with the tacit
understanding that $T$ is only defined for $\varphi\in dom\left(T\right)$. 
\end{rem}
\begin{defn}
Let $T:\mathscr{H}_{1}\rightarrow\mathscr{H}_{2}$ be a densely defined
operator, and consider the subspace $dom\left(T^{*}\right)\subset\mathscr{H}_{2}$
defined as follows:
\begin{align}
dom(T^{*})= & \Big\{ h_{2}\in\mathscr{H}_{2}\mid\mbox{\ensuremath{\exists C=C_{h_{2}}<\infty,} s.t. \ensuremath{\left|\left\langle h_{2},T\varphi\right\rangle _{2}\right|\leq C\left\Vert \varphi\right\Vert _{1}}}\nonumber \\
 & \quad\mbox{holds for \ensuremath{\forall\varphi\in dom\left(T\right)}}\Big\}.\label{eq:in3}
\end{align}
Then by Riesz' theorem, there is a unique $\eta\in\mathscr{H}_{1}$
for which
\begin{equation}
\left\langle \eta,\varphi\right\rangle _{1}=\left\langle h_{2},T\varphi\right\rangle _{2},\quad h_{2}\in dom(T^{*}),\;\varphi\in dom\left(T\right),\label{eq:4}
\end{equation}
and we define the adjoint operator by $T^{*}h_{2}=\eta$.

It is clear that $T^{*}$ is an operator from $\mathscr{H}_{2}$ into
$\mathscr{H}_{1}$:
\[
\xymatrix{\mathscr{H}_{1}\ar@/^{1pc}/[rr]^{T} &  & \mathscr{H}_{2}\ar@/^{1pc}/[ll]^{T^{*}}}
\]
\end{defn}
\Needspace{3\baselineskip}
\begin{defn}
The direct sum space $\mathscr{H}_{1}\oplus\mathscr{H}_{2}$ is a
Hilbert space under the natural inner product
\begin{equation}
\left\langle \begin{bmatrix}\varphi_{1}\\
\varphi_{2}
\end{bmatrix},\begin{bmatrix}\psi_{1}\\
\psi_{2}
\end{bmatrix}\right\rangle :=\left\langle \varphi_{1},\psi_{1}\right\rangle _{\mathscr{H}_{1}}+\left\langle \varphi_{2},\psi_{2}\right\rangle _{\mathscr{H}_{2}},
\end{equation}
and the \emph{graph} of $T$ is 
\begin{equation}
G_{T}:=\left\{ \begin{bmatrix}\varphi\\
T\varphi
\end{bmatrix}\mid\varphi\in dom\left(T\right)\right\} \subset\text{\ensuremath{\mathscr{H}}}_{1}\oplus\text{\ensuremath{\mathscr{H}}}_{2}.\label{eq:in6}
\end{equation}
\end{defn}
\begin{defn}
Let $T:\mathscr{H}_{1}\rightarrow\mathscr{H}_{2}$ be a linear operator. 
\begin{enumerate}
\item \label{enu:g1}$T$ is \emph{closed} iff the graph $G_{T}$ in (\ref{eq:in6})
is closed in $\text{\ensuremath{\mathscr{H}}}_{1}\oplus\text{\ensuremath{\mathscr{H}}}_{2}$.
\item \label{enu:g2} $T$ is \emph{closable} iff $\overline{\ensuremath{G_{T}}}$
is the graph of an operator. 
\item If (\ref{enu:g2}) holds, the operator corresponding to $\overline{G_{T}}$,
denoted $\overline{T}$, is called the \emph{closure}, i.e., 
\begin{equation}
\overline{G_{T}}=G_{\overline{T}}.\label{eq:in7}
\end{equation}
\end{enumerate}
\end{defn}
\begin{rem}
It follows from (\ref{eq:in6}) that $T$ is closable iff $dom(T^{*})$
is dense in $\mathscr{H}_{2}$, see \thmref{in1}. It is not hard
to construct examples of operators $\mathscr{H}_{1}\xrightarrow{\;T\;}\mathscr{H}_{2}$
with dense domain in $\mathscr{H}_{1}$ which are not closable \cite{MR0493419}.
For systematic accounts of closable operators and their applications,
see \cite{MR0052042,MR600620}.
\end{rem}
\begin{defn}
Let $V$ be the unitary operator on $\mathscr{H}\times\mathscr{H}$,
given by 
\[
V\begin{bmatrix}\varphi\\
\psi
\end{bmatrix}=\begin{bmatrix}-\psi\\
\varphi
\end{bmatrix}.
\]
Note that $V^{2}=-I$, so that any subspace is invariant under $V^{2}$. 
\end{defn}
The following two results may be found in \cite{MR0493419} or \cite{MR1157815};
see also \cite{MR2953553}.
\begin{lem}
\label{lem:gT}If $dom\left(T\right)$ is dense, then $G_{T^{*}}=\left(VG_{T}\right)^{\perp}$.
\end{lem}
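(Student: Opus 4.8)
The plan is to prove the set equality by translating membership in each side into a single inner-product identity, and then invoking density of $dom(T)$ at the one place where it is genuinely needed. First I would pin down the ambient space: since $T$ maps $\mathscr{H}_{1}\to\mathscr{H}_{2}$, here $V$ is read as the unitary $\mathscr{H}_{1}\oplus\mathscr{H}_{2}\to\mathscr{H}_{2}\oplus\mathscr{H}_{1}$ sending $(\varphi,\psi)\mapsto(-\psi,\varphi)$, so that
\[
VG_{T}=\bigl\{(-T\varphi,\varphi)\mid\varphi\in dom(T)\bigr\}\subset\mathscr{H}_{2}\oplus\mathscr{H}_{1},
\]
which is the same direct-sum space in which $G_{T^{*}}$ lives; this makes the asserted identity type-correct.

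Next I would compute the orthogonal complement directly. A pair $(h_{2},h_{1})\in\mathscr{H}_{2}\oplus\mathscr{H}_{1}$ lies in $(VG_{T})^{\perp}$ precisely when $\langle h_{2},-T\varphi\rangle_{2}+\langle h_{1},\varphi\rangle_{1}=0$ for every $\varphi\in dom(T)$, that is, when
\begin{equation*}
\langle h_{1},\varphi\rangle_{1}=\langle h_{2},T\varphi\rangle_{2}\qquad\text{for all }\varphi\in dom(T).\tag{$\star$}
\end{equation*}
Then I would match $(\star)$ with membership in $G_{T^{*}}$. One inclusion is immediate: if $h_{2}\in dom(T^{*})$ and $h_{1}=T^{*}h_{2}$, then $(\star)$ is nothing but the defining relation \eqref{eq:4} of the adjoint, so $G_{T^{*}}\subseteq(VG_{T})^{\perp}$, using only the definition of $T^{*}$. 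For the reverse inclusion, suppose $(\star)$ holds. The Cauchy--Schwarz estimate
\[
|\langle h_{2},T\varphi\rangle_{2}|=|\langle h_{1},\varphi\rangle_{1}|\le\|h_{1}\|_{1}\,\|\varphi\|_{1}
\]
shows, via the defining condition \eqref{eq:in3}, that $h_{2}\in dom(T^{*})$ with constant $C_{h_{2}}=\|h_{1}\|_{1}$; comparing $(\star)$ with \eqref{eq:4} then gives $\langle T^{*}h_{2}-h_{1},\varphi\rangle_{1}=0$ for all $\varphi\in dom(T)$.

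The one substantive step --- and the only place density is used --- is the conclusion: since $dom(T)$ is dense in $\mathscr{H}_{1}$, the vector $T^{*}h_{2}-h_{1}$ is orthogonal to a dense subspace, hence $h_{1}=T^{*}h_{2}$ and $(h_{2},h_{1})\in G_{T^{*}}$. (Density is of course also implicit earlier, since it is what makes $\eta=T^{*}h_{2}$ in \eqref{eq:4} well-defined via Riesz.) I do not expect any genuine obstacle here; the only things to watch are the bookkeeping of which direct-sum space each graph sits in, and the sign in the definition of $V$, so that the minus sign lands on the correct component when $(VG_{T})^{\perp}$ is expanded.
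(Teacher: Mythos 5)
Your proof is correct and follows essentially the same route as the paper's: both unwind membership in $(VG_{T})^{\perp}$ to the identity $\langle h_{1},\varphi\rangle_{1}=\langle h_{2},T\varphi\rangle_{2}$ for all $\varphi\in dom(T)$ and match it against the definition of $T^{*}$. You merely make explicit the Cauchy--Schwarz/density step in the reverse inclusion (and the typing of $V$ between the two direct sums), which the paper compresses into a chain of equivalences.
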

\begin{proof}
Direct computation: 
\begin{align*}
\begin{bmatrix}\varphi\\
\psi
\end{bmatrix}\in G_{T^{*}} & \Longleftrightarrow\left\langle T\eta,\varphi\right\rangle =\left\langle \eta,\psi\right\rangle ,\;\forall\eta\in dom\left(T\right)\\
 & \Longleftrightarrow\left\langle \begin{bmatrix}\varphi\\
\psi
\end{bmatrix},\begin{bmatrix}-T\eta\\
\eta
\end{bmatrix}\right\rangle =0,\;\forall\eta\in dom\left(T\right)\\
 & \Longleftrightarrow\begin{bmatrix}\varphi\\
\psi
\end{bmatrix}\in\left(VG_{T}\right)^{\perp}.
\end{align*}
\end{proof}
\begin{thm}
\label{thm:in1}If $dom\left(T\right)$ is dense, then 
\begin{enumerate}
\item \label{enu:b1}$T^{*}$ is closed.
\item \label{enu:b2}$T$ is closable $\Longleftrightarrow$ $dom\left(T^{*}\right)$
is dense.
\item \label{enu:b3}$T$ is closable $\Longrightarrow$ $(\overline{T})^{*}=T^{*}$. 
\end{enumerate}
\end{thm}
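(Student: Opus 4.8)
The plan is to deduce all three items from Lemma~\ref{lem:gT}, the identity $G_{T^*}=(VG_T)^\perp$, together with the elementary properties of the operator $V$, namely $V^2=-I$, $V$ unitary (hence $V$ maps closed subspaces to closed subspaces and commutes with taking orthogonal complements in the sense $V(M^\perp)=(VM)^\perp$), and the fact that $\overline{G_T}=(G_T^{\perp})^{\perp}$ for any subspace.

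For \eqref{enu:b1}: a subspace of the form $M^\perp$ is always closed, so $G_{T^*}=(VG_T)^\perp$ is closed in $\mathscr{H}_2\oplus\mathscr{H}_1$; by the definition of closability of an operator this says exactly that $T^*$ is closed.

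For \eqref{enu:b2}: I would characterize non-density of $dom(T^*)$ via a nonzero vector $\psi\in\mathscr{H}_2$ orthogonal to $dom(T^*)$. Using Lemma~\ref{lem:gT}, $\begin{bmatrix}0\\ \psi\end{bmatrix}$ is orthogonal to $G_{T^*}=(VG_T)^\perp$ precisely when $\begin{bmatrix}0\\ \psi\end{bmatrix}\in\overline{VG_T}=V\overline{G_T}$; applying $V^{-1}=-V$ this says $\begin{bmatrix}-\psi\\ 0\end{bmatrix}\in\overline{G_T}$, i.e.\ $\overline{G_T}$ contains a nonzero vector of the form $\begin{bmatrix}\varphi\\ 0\end{bmatrix}$. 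But $\overline{G_T}$ is the graph of an operator (i.e.\ $T$ is closable) if and only if $\overline{G_T}$ contains no such nonzero vector: a linear subspace $G\subset\mathscr{H}_1\oplus\mathscr{H}_2$ is a graph iff $\begin{bmatrix}\varphi\\ 0\end{bmatrix}\in G$ forces $\varphi=0$. Chasing the equivalences gives: $T$ not closable $\iff$ $dom(T^*)$ not dense, which is \eqref{enu:b2}.

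For \eqref{enu:b3}: assume $T$ closable. Since $\overline{G_T}=G_{\overline T}$ and $\overline{\,\cdot\,}=(\cdot^\perp)^\perp$, apply Lemma~\ref{lem:gT} to $\overline T$ (legitimate because $dom(\overline T)\supset dom(T)$ is dense): $G_{(\overline T)^*}=(VG_{\overline T})^\perp=(V\overline{G_T})^\perp=(\overline{VG_T})^\perp=((VG_T)^{\perp\perp})^\perp=(VG_T)^\perp=G_{T^*}$, where I used that $V$ unitary commutes with closure and that $M^{\perp\perp\perp}=M^\perp$. Equality of graphs gives $(\overline T)^*=T^*$. I expect the main subtlety to be the bookkeeping in step \eqref{enu:b2}---correctly tracking the action of $V$ and $V^{-1}$ on pairs and translating ``$\overline{G_T}$ contains no nonzero purely-vertical vector'' into the graph condition---rather than any deep difficulty; everything else is a formal consequence of Lemma~\ref{lem:gT} and unitarity of $V$.
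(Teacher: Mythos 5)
Your overall route is the paper's: all three parts are extracted from Lemma \ref{lem:gT} together with unitarity of $V$ and $V^{2}=-I$. Part (\ref{enu:b1}) is exactly the paper's argument (though you wrote ``closability'' where you mean closedness of $T^{*}$), and your part (\ref{enu:b3}) is a clean direct graph computation, slightly more self-contained than the paper's chain $T^{*}=(T^{*})^{**}=(T^{**})^{*}=(\overline{T})^{*}$. For (\ref{enu:b2}) the paper instead derives $\overline{G_{T}}=(VG_{T^{*}})^{\perp}$ and invokes $T^{**}$ when $dom(T^{*})$ is dense, so your contrapositive argument is, if anything, more explicitly two-directional.

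However, step (\ref{enu:b2}) as written passes through a false statement. A linear subspace $G\subset\mathscr{H}_{1}\oplus\mathscr{H}_{2}$ is the graph of an operator iff $\begin{bmatrix}0\\ \psi\end{bmatrix}\in G$ forces $\psi=0$, i.e.\ $G$ contains no nonzero vector with vanishing \emph{first} component (this is what guarantees single-valuedness); your criterion ``$\begin{bmatrix}\varphi\\ 0\end{bmatrix}\in G$ forces $\varphi=0$'' is the condition for the \emph{inverse} relation to be a graph and is not equivalent (the subspace of all $\begin{bmatrix}0\\ \psi\end{bmatrix}$, $\psi\in\mathscr{H}_{2}$, satisfies your criterion but is not a graph). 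Correspondingly, the vector in $\mathscr{H}_{2}\oplus\mathscr{H}_{1}$ encoding ``$\psi\perp dom(T^{*})$'' is $\begin{bmatrix}\psi\\ 0\end{bmatrix}$ (pair it against $\begin{bmatrix}h\\ T^{*}h\end{bmatrix}\in G_{T^{*}}$), not $\begin{bmatrix}0\\ \psi\end{bmatrix}$; the latter would encode $\psi\perp ran(T^{*})$ and does not even lie in the right space. Applying $V^{-1}=-V$ to the correct vector gives $V^{-1}\begin{bmatrix}\psi\\ 0\end{bmatrix}=\begin{bmatrix}0\\ -\psi\end{bmatrix}\in\overline{G_{T}}$, a \emph{vertical} vector, which is exactly the obstruction to $\overline{G_{T}}$ being a graph. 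You have two slot-flips that happen to cancel, so the equivalence you land on is correct, but neither the intermediate vector nor the graph criterion is right as stated; with those two corrections the argument for (\ref{enu:b2}) is complete.
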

\begin{proof}
(\ref{enu:b1}) This is immediate from \lemref{gT}, since $U^{\perp}$
is closed for any $U$. 

For (\ref{enu:b2}), closability gives 
\begin{alignat*}{2}
G_{\overline{T}}=\overline{G_{T}}=\left(G_{T}^{\perp}\right)^{\perp}= & \left(V^{2}G_{T}^{\perp}\right)^{\perp} & \qquad & V^{2}=I\\
= & \left(V\left(VG_{T}\right)^{\perp}\right)^{\perp} &  & V\;\text{is unitary}\\
= & \left(VG_{T^{*}}\right)^{\perp} &  & \text{part }\left(\ref{enu:b1}\right).
\end{alignat*}
If $dom(T^{*})$ is dense, then (\ref{enu:b1}) applies again to give
$G_{\overline{T}}=G_{T^{**}}$. 

For (\ref{enu:b3}), we use (\ref{enu:b1}), then (\ref{enu:b2})
again: 
\[
T^{*}=\overline{T^{*}}=\left(T^{*}\right)^{**}=\left(T^{**}\right)^{*}=\left(\overline{T}\right)^{*}.
\]
\end{proof}
\begin{defn}
An operator $T:\mathscr{H}_{1}\rightarrow\mathscr{H}_{2}$ is \emph{bounded}
iff $dom\left(T\right)=\mathscr{H}_{1}$ and there is $C<\infty$
for which $\left\Vert T\varphi\right\Vert _{2}\leq C\left\Vert \varphi\right\Vert _{1}$,
$\forall\varphi\in\mathscr{H}_{1}$. In this case, the \emph{norm}
of $T$ is 
\begin{equation}
\left\Vert T\right\Vert :=\inf\left\{ C\mid\left\Vert T\varphi\right\Vert _{2}\leq C\left\Vert \varphi\right\Vert _{1},\;\forall\in\mathscr{H}_{1}\right\} ,
\end{equation}
and it satisfies 
\begin{equation}
\left\Vert T\right\Vert =\left\Vert T^{*}\right\Vert =\left\Vert T^{*}T\right\Vert ^{1/2}.
\end{equation}
Sometimes, we clarify the notation with a subscript, e.g., $\left\Vert T\right\Vert _{\mathscr{H}_{1}\rightarrow\mathscr{H}_{2}}$
and $\left\Vert T^{*}\right\Vert _{\mathscr{H}_{2}\rightarrow\mathscr{H}_{1}}$. 
\end{defn}
\begin{thm}[von Neumann \cite{MR1502991,MR1157815}]
\label{thm:vN}Let $\mathscr{H}_{i}$, $i=1,2$, be two Hilbert spaces,
and let $T$ be a closed operator from $\mathscr{H}_{1}$ into $\mathscr{H}_{2}$
having dense domain in $\mathscr{H}_{1}$; then $T^{*}T$ is selfadjoint
in $\mathscr{H}_{1}$, $TT^{*}$ is selfadjoint in $\mathscr{H}_{2}$,
both with dense domains; and there is a \uline{partial isometry}
$J$ from $\mathscr{H}_{1}$ into $\mathscr{H}_{2}$ such that 
\begin{equation}
T=J\left(T^{*}T\right)^{\frac{1}{2}}=\left(TT^{*}\right)^{\frac{1}{2}}J\label{eq:vn1}
\end{equation}
holds on $dom\left(T\right)$. (Equation (\ref{eq:vn1}) is called
the polar decomposition of $T$.)
\end{thm}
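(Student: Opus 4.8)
The plan is to run von Neumann's classical argument, whose engine is the bounded, positive, selfadjoint resolvent $(I+T^{*}T)^{-1}$. I organize it in four stages: (i) $T^{*}T$ and $TT^{*}$ are selfadjoint with dense domains; (ii) $|T|:=(T^{*}T)^{1/2}$ satisfies $dom(|T|)=dom(T)$ and $\||T|\varphi\|_{1}=\|T\varphi\|_{2}$ for $\varphi\in dom(T)$; (iii) a partial isometry $J$ is built on $\overline{ran(|T|)}$; (iv) one dualizes to obtain the second factorization.

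For (i), the place where closedness is used is an orthogonal graph splitting. Since $T$ is closed, $G_{T}$ is closed, and since $T$ is then closable, $dom(T^{*})$ is dense by \thmref{in1}; hence \lemref{gT}, read inside $\mathscr{H}_{1}\oplus\mathscr{H}_{2}$, yields the orthogonal decomposition
\[
\mathscr{H}_{1}\oplus\mathscr{H}_{2}=G_{T}\oplus\bigl\{(T^{*}h,-h):h\in dom(T^{*})\bigr\}.
\]
Decomposing $(f,0)$ for arbitrary $f\in\mathscr{H}_{1}$ against this splitting and matching the two coordinates forces $h=T\varphi$, so $\varphi\in dom(T^{*}T)$ and $f=(I+T^{*}T)\varphi$; together with the elementary bound $\|(I+T^{*}T)\varphi\|^{2}=\|\varphi\|^{2}+2\|T\varphi\|^{2}+\|T^{*}T\varphi\|^{2}\ge\|\varphi\|^{2}$, this shows $I+T^{*}T\colon dom(T^{*}T)\to\mathscr{H}_{1}$ is a bijection with contractive inverse $B:=(I+T^{*}T)^{-1}$. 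Because $T^{*}T$ is obviously symmetric, $B$ inherits symmetry, so $B$ is a bounded, everywhere-defined, injective, selfadjoint operator; being injective it has dense range, hence $dom(T^{*}T)=ran(B)$ is dense, and $T^{*}T=B^{-1}-I$ is selfadjoint (and $\ge 0$). Applying the same to the closed, densely defined operator $T^{*}$ gives that $TT^{*}=(T^{*})^{*}T^{*}$ is selfadjoint with dense domain in $\mathscr{H}_{2}$.

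For (ii), put $|T|:=(T^{*}T)^{1/2}$ by the spectral theorem; it is selfadjoint, $\ge 0$, closed, and $dom(T^{*}T)$ is a core for it. On $dom(T^{*}T)$ one has $\||T|\varphi\|^{2}=\langle\varphi,T^{*}T\varphi\rangle=\|T\varphi\|^{2}$. Moreover $dom(T^{*}T)$ is a core for $T$: if $\varphi\in dom(T)$ is orthogonal to $dom(T^{*}T)$ in the graph inner product of $T$, then $0=\langle\varphi,\psi\rangle+\langle T\varphi,T\psi\rangle=\langle\varphi,(I+T^{*}T)\psi\rangle$ for every $\psi\in dom(T^{*}T)$, and surjectivity of $I+T^{*}T$ from (i) forces $\varphi=0$. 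Since $T$ and $|T|$ are both closed and agree isometrically on the common core $dom(T^{*}T)$, a graph-limit argument gives $dom(T)=dom(|T|)$ and $\|T\varphi\|=\||T|\varphi\|$ on $dom(T)$, hence also $\ker T=\ker|T|$. Then for (iii) define $J(|T|\varphi):=T\varphi$ on $ran(|T|)$; this is well defined and isometric by the last identity, extends by continuity to an isometry on $\overline{ran(|T|)}$, and by $0$ on $\overline{ran(|T|)}^{\perp}=\ker|T|=\ker T$; the resulting $J$ is a partial isometry with initial space $(\ker T)^{\perp}=\overline{ran(T^{*})}$ and final space $\overline{ran(T)}$, and $T=J|T|$ on $dom(T)$ by construction. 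Finally (iv): since $J$ is bounded, $T^{*}=(J|T|)^{*}=|T|J^{*}$, so $TT^{*}=J|T|^{2}J^{*}=J(T^{*}T)J^{*}$; using that $J^{*}J$ is the projection onto $\overline{ran(|T|)}$, that $JJ^{*}$ is the projection onto $\overline{ran(T)}$, and that $|T|$ is unchanged under right-composition with the projection onto $\overline{ran(|T|)}$, one checks $(TT^{*})^{1/2}=J|T|J^{*}$ and hence $(TT^{*})^{1/2}J=J|T|J^{*}J=J|T|=T$ on $dom(T)$.

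The main obstacle is Stage (i): promoting $T^{*}T$ from merely symmetric and semibounded to genuinely selfadjoint. This is precisely where the hypothesis that $T$ is closed is indispensable — it makes $G_{T}$ closed, so the graph splitting above is an honest orthogonal direct sum, which is what delivers the surjectivity of $I+T^{*}T$ — and it is where von Neumann's device of reading selfadjointness of $T^{*}T$ off its bounded positive selfadjoint resolvent $(I+T^{*}T)^{-1}$ does the real work. Stage (ii)'s core assertion, again powered by that surjectivity, is the second delicate point; everything in Stages (iii)–(iv) is then routine bookkeeping with bounded operators.
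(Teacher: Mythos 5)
Your proof is correct: this is precisely von Neumann's classical argument, with the closedness of $G_{T}$ used to obtain the orthogonal splitting $\mathscr{H}_{1}\oplus\mathscr{H}_{2}=G_{T}\oplus\{(T^{*}h,-h):h\in dom(T^{*})\}$, surjectivity of $I+T^{*}T$, selfadjointness read off the bounded positive inverse, and the partial isometry built on $\overline{ran(|T|)}$. The paper itself offers no proof of this statement — it is quoted as a classical theorem with references to von Neumann's original papers — so there is nothing to compare against beyond noting that your argument is the standard one found in those sources, and all the delicate points (the core property of $dom(T^{*}T)$ for both $T$ and $|T|$, and the identification $\ker|T|=\ker T$) are handled correctly.
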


\section{\label{sec:cp}The characteristic projection}

While a given linear operator between a pair of Hilbert spaces, say
$T$, may in general have subtle features (dictated by the particular
application at hand), the closure of graph($T$) will be a closed
subspace of the direct sum-Hilbert space, and hence the orthogonal
projection onto this subspace will be a block matrix, i.e., this projection
is a $2\times2$ matrix with bounded operator-entries. Stone suggested
the name, the characteristic projection. It will be studied below.
Our result \thmref{cp2} is new. We further show (\corref{schur})
that every closed operators $T$ has vanishing Schur-complements for
its characteristic block-matrix.

The characteristic projection was introduced and studied by Marshall
Stone in \cite{MR0052042} as a means of understanding an operator
via its graph. For more background, see \cite{MR600620,MR2953553}. 

If $\mathscr{H}_{i}$, $i=1,2,3$ are Hilbert spaces with operators
$\mathscr{H}_{1}\xrightarrow{\;A\;}\mathscr{H}_{2}\xrightarrow{\;B\;}\mathscr{H}_{3}$,
then the domain of $BA$ is 
\[
dom\left(BA\right):=\left\{ \varphi\in dom\left(A\right)\mid A\varphi\in dom\left(B\right)\right\} ,
\]
and for $x\in dom\left(BA\right)$, we have $\left(BA\right)x=B\left(Ax\right)$.
In general, $dom\left(BA\right)$ may be $\left\{ 0\right\} $, even
if $A$ and $B$ are densely defined; see \exaref{nd2}.
\begin{defn}[Characteristic projection]
\label{def:cp1}For a densely defined linear operator $\mathscr{H}_{1}\xrightarrow{\;T\;}\mathscr{H}_{2}$,
the \emph{characteristic projection} $E=E_{T}$ of $T$ is the projection
in $\mathscr{H}_{1}\oplus\mathscr{H}_{2}$ onto $\overline{G_{T}}$,
where 
\begin{equation}
E=\begin{bmatrix}E_{11} & E_{12}\\
E_{21} & E_{22}
\end{bmatrix},
\end{equation}
and the components are bounded operators
\begin{equation}
E_{ij}:\mathscr{H}_{j}\longrightarrow\mathscr{H}_{i},\quad i,j=1,2.\label{eq:cp2}
\end{equation}
\end{defn}
\begin{rem}
Since $E$ is a projection, we have $E=E^{*}=E^{2}$, where $E=E^{*}$
implies
\begin{equation}
E_{11}=E_{11}^{*}\geq0,\;E_{12}=E_{21}^{*},\;E_{21}=E_{12}^{*},\;E_{22}=E_{22}^{*}\geq0,
\end{equation}
where the ordering refers to the natural order on selfadjoint operators,
and also $E=E^{2}$ implies 
\begin{equation}
E_{ij}=E_{i1}E_{1j}+E_{i2}E_{2j},\quad i,j=1,2.
\end{equation}
\end{rem}
\begin{lem}
If $U$ is any unitary operator on $\mathscr{H}$ and $\mathscr{K}\subset\mathscr{H}$
is a subspace, then the orthogonal projection $\left(U\mathscr{K}\right)^{\perp}$
is given by 
\begin{equation}
\text{proj}\left[\left(U\mathscr{K}\right)^{\perp}\right]=I-UPU^{*},\label{eq:cp5}
\end{equation}
where $P=P_{\mathscr{K}}$ is the projection to $\mathscr{K}$. 
\end{lem}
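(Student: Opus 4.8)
The plan is to recognize $UPU^{*}$ as the orthogonal projection onto $U\mathscr{K}$ and then invoke the elementary fact that subtracting an orthogonal projection from the identity yields the orthogonal projection onto the orthogonal complement of its range.

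First I would verify the two defining properties of an orthogonal projection for the operator $Q:=UPU^{*}$. Self-adjointness is immediate: $Q^{*}=\left(UPU^{*}\right)^{*}=UP^{*}U^{*}=UPU^{*}=Q$, using $P=P^{*}$ and $U^{**}=U$. Idempotency follows from unitarity of $U$ together with $P^{2}=P$: $Q^{2}=UPU^{*}UPU^{*}=UP\left(U^{*}U\right)PU^{*}=UP^{2}U^{*}=UPU^{*}=Q$. Next I would identify the range: since $U$ is a bijection of $\mathscr{H}$ and the range of $P$ is $\mathscr{K}$, the range of $Q$ is $U\mathscr{K}$, and this is a closed subspace because $\mathscr{K}$ is closed and $U$ is an isometric surjection. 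Hence $Q$ is precisely the orthogonal projection onto $U\mathscr{K}$.

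Finally, for any orthogonal projection $Q$ onto a closed subspace $\mathscr{M}$, the operator $I-Q$ is self-adjoint and satisfies $\left(I-Q\right)^{2}=I-2Q+Q^{2}=I-Q$, with range equal to $\ker Q=\mathscr{M}^{\perp}$; thus $I-Q$ is the orthogonal projection onto $\mathscr{M}^{\perp}$. Applying this with $\mathscr{M}=U\mathscr{K}$ gives $\text{proj}\left[\left(U\mathscr{K}\right)^{\perp}\right]=I-UPU^{*}$, which is (\ref{eq:cp5}). There is essentially no obstacle in this argument; the only hypothesis worth keeping in mind is that $\mathscr{K}$ — and therefore $U\mathscr{K}$ — must be a closed subspace, which is the standing convention ensuring that the relevant orthogonal projections exist.
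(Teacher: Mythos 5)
Your proof is correct and follows essentially the same route as the paper: a direct verification that the operator in question is a selfadjoint idempotent with the right range. If anything, your version is slightly more complete, since identifying $\mathrm{ran}\left(UPU^{*}\right)=U\mathscr{K}$ exactly pins down the range of $I-UPU^{*}$ as all of $\left(U\mathscr{K}\right)^{\perp}$, whereas the paper only checks the containment via orthogonality to $U\mathscr{K}$.
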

\begin{proof}
It is obvious that (\ref{eq:cp5}) is selfadjoint and easy to check
that it is idempotent. It is also easy to check that $\left\langle \left(I-UPU^{*}\right)\varphi,U\psi\right\rangle =0$
whenever $\psi\in\mathscr{K}$. 
\end{proof}
\begin{lem}
Let $E=E_{T}$ be the characteristic projection of a closable operator
$T$. In terms of the components (\ref{eq:cp2}), the characteristic
project of $\mathscr{H}_{2}\xrightarrow{\;T^{*}\;}\mathscr{H}_{1}$
in $\mathscr{H}_{2}\oplus\mathscr{H}_{1}$is given by 
\begin{equation}
E_{T^{*}}=\begin{bmatrix}I-E_{22} & E_{21}\\
E_{12} & I-E_{11}
\end{bmatrix}.\label{eq:cp6}
\end{equation}
\end{lem}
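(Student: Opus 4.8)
The plan is to read off the formula (\ref{eq:cp6}) directly from \lemref{gT} and the lemma just proved. First I would observe that closability of $T$ is precisely what makes the statement meaningful: by \thmref{in1} it gives that $dom\left(T^{*}\right)$ is dense, so $T^{*}\colon\mathscr{H}_{2}\to\mathscr{H}_{1}$ is densely defined and closed, and hence its characteristic projection $E_{T^{*}}$ (in the sense of \defref{cp1}) exists and is exactly the orthogonal projection of $\mathscr{H}_{2}\oplus\mathscr{H}_{1}$ onto the \emph{closed} subspace $G_{T^{*}}$. By \lemref{gT}, $G_{T^{*}}=\left(VG_{T}\right)^{\perp}=\left(V\overline{G_{T}}\right)^{\perp}$, the orthogonal complements being taken in $\mathscr{H}_{2}\oplus\mathscr{H}_{1}$, and $\overline{G_{T}}$ is the range of $E_{T}$.

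Next I would invoke the preceding lemma — whose proof goes through verbatim for a unitary $V$ between two (a priori different) Hilbert spaces — with $U=V$ and $P=E_{T}$. This yields
\[
E_{T^{*}}=\text{proj}\bigl[\left(V\overline{G_{T}}\right)^{\perp}\bigr]=I-VE_{T}V^{*},
\]
where $I$ denotes the identity of $\mathscr{H}_{2}\oplus\mathscr{H}_{1}$. It then only remains to compute the right-hand side in $2\times2$ block form. Reading off $V=\begin{bmatrix}0&-I\\I&0\end{bmatrix}$ and $V^{*}=V^{-1}=\begin{bmatrix}0&I\\-I&0\end{bmatrix}$ from the defining formula for $V$, and substituting $E_{T}=\begin{bmatrix}E_{11}&E_{12}\\E_{21}&E_{22}\end{bmatrix}$, a one-line matrix multiplication gives $VE_{T}V^{*}=\begin{bmatrix}E_{22}&-E_{21}\\-E_{12}&E_{11}\end{bmatrix}$, whence $E_{T^{*}}=I-VE_{T}V^{*}=\begin{bmatrix}I-E_{22}&E_{21}\\E_{12}&I-E_{11}\end{bmatrix}$, which is (\ref{eq:cp6}).

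I do not expect a genuine obstacle here; the only thing to be careful with is the bookkeeping caused by the two opposite orderings of the summands — $E_{T}$ acts on $\mathscr{H}_{1}\oplus\mathscr{H}_{2}$ while $E_{T^{*}}$ acts on $\mathscr{H}_{2}\oplus\mathscr{H}_{1}$ — so that the top-left $I$ in $E_{T^{*}}$ is $I_{\mathscr{H}_{2}}$, the bottom-right $I$ is $I_{\mathscr{H}_{1}}$, and the entries $E_{21}\colon\mathscr{H}_{1}\to\mathscr{H}_{2}$ and $E_{12}\colon\mathscr{H}_{2}\to\mathscr{H}_{1}$ land in the off-diagonal slots as displayed. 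As an independent check one can instead verify directly, using $E=E^{*}=E^{2}$ and the block relations recorded right after \defref{cp1}, that the claimed matrix is selfadjoint and idempotent with range $G_{T^{*}}$.
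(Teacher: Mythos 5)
Your proposal is correct and follows essentially the same route as the paper: the paper's proof likewise deduces $E_{T^{*}}=I-VE_{T}V^{*}$ from the identity $G_{T^{*}}=\left(VG_{T}\right)^{\perp}$ of the graph lemma together with the preceding projection formula, and then reads off the block entries. You merely make explicit two details the paper leaves tacit --- that $\left(VG_{T}\right)^{\perp}=\left(V\overline{G_{T}}\right)^{\perp}$ and that $V$ is here a unitary between $\mathscr{H}_{1}\oplus\mathscr{H}_{2}$ and $\mathscr{H}_{2}\oplus\mathscr{H}_{1}$ rather than on a single space --- which is sound bookkeeping, not a different argument.
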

\begin{proof}
Since $T$ is closable, we know $dom\left(T^{*}\right)$ is dense
(\thmref{in1}). Then (\ref{eq:cp6}) follows from the identity $G_{T^{*}}=\left(VG_{T}\right)^{\perp}$
of \lemref{gT}, which indicates that $E_{T^{*}}=I-VEV^{*}$. 
\end{proof}
\begin{rem}
Since the action of $T$ can be described in terms of (\ref{eq:cp2})
as the mapping 
\begin{equation}
\begin{bmatrix}E_{11}\varphi\\
E_{12}\psi
\end{bmatrix}\xrightarrow{\;T\;}\begin{bmatrix}E_{21}\varphi\\
E_{22}\psi
\end{bmatrix}\label{eq:cp7}
\end{equation}
it is clear that 
\begin{equation}
TE_{11}=E_{21}\quad\text{and}\quad TE_{12}=E_{22},\label{eq:cp8}
\end{equation}
for example, by putting $\varphi=0$ or $\psi=0$ in (\ref{eq:cp8});
cf. \figref{cp1}. Similarly, (\ref{eq:cp6}) yields
\begin{equation}
T^{*}\left(I-E_{22}\right)=E_{12}\quad\text{and}\quad T^{*}E_{21}=I-E_{11}.\label{eq:cp9}
\end{equation}
\end{rem}
\begin{figure}
\includegraphics[width=0.25\textwidth]{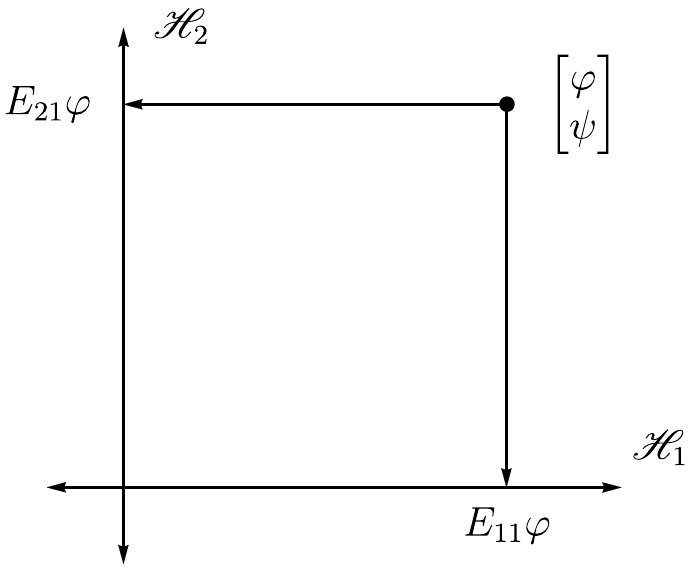}

\caption{\label{fig:cp1}A diagram indicating why $TE_{11}=E_{21}$; see (\ref{eq:cp7})
and (\ref{eq:cp8}).}

\end{figure}

\begin{thm}[{\cite[Thm. 4]{MR0052042}}]
The entries of $E=E_{T}$ are given in terms of $T$ by 
\begin{equation}
E=\begin{bmatrix}\left(I+T^{*}T\right)^{-1} & T^{*}\left(I+TT^{*}\right)^{-1}\\
T\left(I+T^{*}T\right)^{-1} & TT^{*}\left(I+TT^{*}\right)^{-1}
\end{bmatrix}.\label{eq:cp10}
\end{equation}
\end{thm}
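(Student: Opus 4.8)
The plan is to identify the characteristic projection $E_T$ using the two facts already established: that $E_T$ is the orthogonal projection onto $\overline{G_T}$, and that $G_{T^*}=(VG_T)^{\perp}$ from \lemref{gT}. The key observation is that when $T$ is closed, $\mathscr{H}_1\oplus\mathscr{H}_2$ splits as $G_T\oplus VG_{T^*}$; indeed $V$ is unitary with $V^2=-I$, so $VG_{T^*}=V(VG_T)^{\perp}=(V^2G_T)^{\perp}=G_T^{\perp}$. Thus every vector $\begin{bmatrix}h_1\\ h_2\end{bmatrix}$ decomposes uniquely as $\begin{bmatrix}\varphi\\ T\varphi\end{bmatrix}+\begin{bmatrix}-T^*\psi\\ \psi\end{bmatrix}$ for suitable $\varphi\in dom(T)$, $\psi\in dom(T^*)$. (If $T$ is only closable, apply this to $\overline T$ and use $(\overline T)^*=T^*$ and $T\subset\overline T$; the entries of $E_T=E_{\overline T}$ are then expressed via $\overline T$, but one checks $T(I+T^*T)^{-1}=\overline T(I+\overline T^{*}\overline T)^{-1}$ since the range of $(I+T^*T)^{-1}$ lies in $dom(T^*T)\subset dom(T)$ by von Neumann's theorem.)

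First I would set $h_2=0$ and solve for the first column of $E_T$. From the decomposition $\begin{bmatrix}h_1\\ 0\end{bmatrix}=\begin{bmatrix}\varphi - T^*\psi\\ T\varphi+\psi\end{bmatrix}$ we get $\psi=-T\varphi$, hence $h_1=\varphi+T^*T\varphi=(I+T^*T)\varphi$. By \thmref{vN}, $T^*T$ is selfadjoint on a dense domain in $\mathscr{H}_1$, so $I+T^*T$ is invertible with everywhere-defined bounded inverse (it is selfadjoint and $\geq I$). Therefore $\varphi=(I+T^*T)^{-1}h_1$, and the component of the decomposition lying in $G_T$ is $\begin{bmatrix}\varphi\\ T\varphi\end{bmatrix}=\begin{bmatrix}(I+T^*T)^{-1}h_1\\ T(I+T^*T)^{-1}h_1\end{bmatrix}$. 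This is precisely $E_T\begin{bmatrix}h_1\\ 0\end{bmatrix}$, giving $E_{11}=(I+T^*T)^{-1}$ and $E_{21}=T(I+T^*T)^{-1}$, matching the left column of \eqref{eq:cp10}; this also re-derives $TE_{11}=E_{21}$ from \eqref{eq:cp8}.

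Next I would obtain the second column. One route is to repeat the argument with $h_1=0$: from $\begin{bmatrix}0\\ h_2\end{bmatrix}=\begin{bmatrix}\varphi-T^*\psi\\ T\varphi+\psi\end{bmatrix}$ we get $\varphi=T^*\psi$, so $h_2=TT^*\psi+\psi=(I+TT^*)\psi$, whence $\psi=(I+TT^*)^{-1}h_2$ (again using \thmref{vN} for $TT^*$ selfadjoint on $\mathscr{H}_2$). The $G_T$-component is then $\begin{bmatrix}T^*\psi\\ T T^*\psi\end{bmatrix}=\begin{bmatrix}T^*(I+TT^*)^{-1}h_2\\ TT^*(I+TT^*)^{-1}h_2\end{bmatrix}$, giving $E_{12}=T^*(I+TT^*)^{-1}$ and $E_{22}=TT^*(I+TT^*)^{-1}$. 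Alternatively — and perhaps cleaner — I would apply the already-proved first column formula to the operator $T^*$ in place of $T$: its characteristic projection $E_{T^*}$ has $(1,1)$-entry $(I+(T^*)^*T^*)^{-1}=(I+TT^*)^{-1}$ and $(2,1)$-entry $T^*(I+TT^*)^{-1}$, and then \eqref{eq:cp6} (which reads $E_{T^*}=\begin{bmatrix}I-E_{22}&E_{21}\\ E_{12}&I-E_{11}\end{bmatrix}$) immediately yields $E_{22}=I-(I+TT^*)^{-1}=TT^*(I+TT^*)^{-1}$ and $E_{12}=T^*(I+TT^*)^{-1}$, completing \eqref{eq:cp10}.

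The main obstacle, and the only genuinely substantive point, is justifying that the orthogonal decomposition $\mathscr{H}_1\oplus\mathscr{H}_2=G_T\oplus VG_{T^*}$ actually holds as a decomposition \emph{of the whole space} — i.e. that the two closed subspaces $G_T$ (closed since we reduce to $\overline T$) and $VG_{T^*}=G_T^{\perp}$ are complementary — together with the fact that $I+T^*T$ and $I+TT^*$ are boundedly invertible and have the right domains. Both are exactly the content of von Neumann's theorem (\thmref{vN}) and the closed-graph bookkeeping in \thmref{in1}, so once those are invoked the computation is the routine linear algebra sketched above. I should also double-check that the formal manipulations $h_1=(I+T^*T)\varphi$ are legitimate, namely that $\varphi\in dom(T)$ forces $T\varphi\in dom(T^*)$ here — this is automatic because $\psi=-T\varphi$ was produced as the second coordinate of a vector in $VG_{T^*}$, hence lies in $dom(T^*)$ by construction.
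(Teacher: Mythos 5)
Your argument is correct, and it reaches the formula \eqref{eq:cp10} by a route that is mathematically the same as the paper's but organized differently. The paper works at the level of the operator identities \eqref{eq:cp8}--\eqref{eq:cp9} (which encode that $E$ maps into $\overline{G_T}$ and $I-E$ into its complement $VG_{T^*}$) and solves them algebraically: $T^*TE_{11}=T^*E_{21}=I-E_{11}$ gives $E_{11}=(I+T^*T)^{-1}$, etc. You instead go back one step and compute the projection vector-by-vector from the orthogonal decomposition $\mathscr{H}_1\oplus\mathscr{H}_2=\overline{G_T}\oplus VG_{T^*}$ furnished by \lemref{gT}, solving $\begin{bmatrix}h_1\\0\end{bmatrix}=\begin{bmatrix}\varphi\\T\varphi\end{bmatrix}+\begin{bmatrix}-T^*\psi\\ \psi\end{bmatrix}$ for $\varphi$. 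This is von Neumann's original argument, and it buys you something the paper's proof leaves implicit: the surjectivity of $I+T^*T$ and the fact that $T\varphi\in dom(T^*)$ are visibly forced by the decomposition rather than assumed, so the domain bookkeeping is airtight. The price is that you must invoke (or re-prove) that the two closed subspaces really span the whole direct sum, which is exactly \thmref{vN}; the paper's shorter computation hides this inside the preparatory identities. One small correction: in your closable-but-not-closed remark, the range of $(I+T^*T)^{-1}$ is $dom(\overline{T}^{\,*}\overline{T})\subset dom(\overline{T})$, not $dom(T)$, so the entries of $E_T=E_{\overline{T}}$ must genuinely be written with $\overline{T}$ in place of $T$ (as the theorem tacitly intends); the identity $T(I+T^*T)^{-1}=\overline{T}(I+T^*T)^{-1}$ as you state it does not literally hold on all of $\mathscr{H}_1$ unless $T$ is closed.
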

\begin{proof}
Applying $T^{*}$ to (\ref{eq:cp8}) and then using (\ref{eq:cp9})
gives $T^{*}TE_{11}=T^{*}E_{21}=I-E_{11}$, which can be solved for
$E_{11}$ as $E_{11}=\left(I+T^{*}T\right)^{-1}$, whence another
application of $T$ (and (\ref{eq:cp8})) gives $E_{21}=T\left(I+T^{*}T\right)^{-1}$. 

Now applying $T$ to (\ref{eq:cp9}) and then using (\ref{eq:cp8})
gives $TT^{*}\left(I-E_{22}\right)=TE_{12}=E_{22}$, whence $I-E_{22}=\left(I+TT^{*}\right)^{-1}$
$\Longrightarrow$ $E_{12}=T^{*}\left(I+TT^{*}\right)^{-1}$, by (\ref{eq:cp9}),
and applying $T$ to this last one gives $E_{22}=TT^{*}\left(I+TT^{*}\right)^{-1}$. 
\end{proof}
\begin{rem}
Many more identities can be recovered from (\ref{eq:cp7}) in this
way. For example, applying $T^{*}$ to (\ref{eq:cp8}) and then using
(\ref{eq:cp9}) also gives $T^{*}TE_{12}=T^{*}E_{22}=T^{*}-E_{12}$,
which can be solved these for $E_{12}$ to give
\begin{equation}
E_{12}=\left(I+T^{*}T\right)^{-1}T^{*}.\label{eq:cp11}
\end{equation}
Now applying $T$ to (\ref{eq:cp9}) and then using (\ref{eq:cp8})
gives 
\begin{alignat*}{1}
TT^{*}E_{21}=T\left(I-E_{11}\right) & =T-E_{21},\;\text{and}\\
TT^{*}\left(I-E_{22}\right) & =TE_{12}=E_{22}.
\end{alignat*}
Solving these for $E_{22}$ and $E_{21}$, respectively, gives
\begin{equation}
E_{21}=\left(I+TT^{*}\right)^{-1}T,\quad E_{22}=\left(I+TT^{*}\right)^{-1}TT^{*}.\label{eq:cp13}
\end{equation}

On the other hand, applying (\ref{eq:cp8}) to (\ref{eq:cp11}) gives
$E_{22}=T\left(I+T^{*}T\right)^{-1}T^{*}$, and applying (\ref{eq:cp9})
to (\ref{eq:cp13}) yields 
\begin{align*}
I-E_{11} & =T^{*}T\left(I+T^{*}T\right)^{-1},\\
I-E_{22} & =I-\left(I+TT^{*}\right)^{-1}TT^{*},\\
E_{11} & =I-T^{*}T\left(1+T^{*}T\right)^{-1},\\
E_{12} & =T^{*}-T^{*}\left(I+TT^{*}\right)^{-1}TT^{*}.
\end{align*}
A summary of the above: 
\begin{align*}
E_{11} & =\left(I+T^{*}T\right)^{-1}=I-T^{*}T\left(1+T^{*}T\right)^{-1},\\
E_{12} & =\left(I+T^{*}T\right)^{-1}T^{*}=T^{*}\left(I+TT^{*}\right)^{-1}=T^{*}-T^{*}\left(I+TT^{*}\right)^{-1}TT^{*},\\
E_{21} & =\left(I+TT^{*}\right)^{-1}T=T\left(I+T^{*}T\right)^{-1},\\
E_{22} & =\left(I+TT^{*}\right)^{-1}TT^{*}=TT^{*}\left(I+TT^{*}\right)^{-1}=I-\left(I+TT^{*}\right)^{-1}TT^{*}.
\end{align*}
\end{rem}
\begin{defn}
For a matrix $X$ with block decomposition 
\[
X=\begin{bmatrix}A & B\\
C & D
\end{bmatrix},
\]
the \emph{Schur complements} (see \cite{MR2160825}) are 
\begin{equation}
X/A:=D-CA^{-1}B\quad\text{and}\quad X/D:=A-BD^{-1}C.\label{eq:cp17}
\end{equation}
\end{defn}
\begin{cor}
\label{cor:schur}A closed operator $T$ has Schur complements 
\[
E_{T}/E_{11}=E_{T}/E_{22}=0.
\]
 
\end{cor}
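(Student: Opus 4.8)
The plan is a direct computation: I would feed the explicit formulas (\ref{eq:cp10}) for the entries of $E_{T}$, together with their variants in the following Remark (e.g.\ (\ref{eq:cp11}), (\ref{eq:cp13})), into the two Schur complements
\[
E_{T}/E_{11}=E_{22}-E_{21}E_{11}^{-1}E_{12},\qquad E_{T}/E_{22}=E_{11}-E_{12}E_{22}^{-1}E_{21},
\]
and watch the resolvent factors telescope. For the first: $E_{11}=\left(I+T^{*}T\right)^{-1}$ is injective, so $E_{11}^{-1}$ is the closed densely defined operator $I+T^{*}T$; writing $E_{21}=T\left(I+T^{*}T\right)^{-1}$ and $E_{12}=\left(I+T^{*}T\right)^{-1}T^{*}$, the middle factor cancels and $E_{21}E_{11}^{-1}E_{12}=T\left(I+T^{*}T\right)^{-1}T^{*}$, which is exactly $E_{22}$ (recall $E_{22}=T\left(I+T^{*}T\right)^{-1}T^{*}$ from that Remark). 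Hence $E_{T}/E_{11}=0$. Cleanest of all, one avoids $E_{11}^{-1}$ altogether: $E_{21}=TE_{11}$ and $TE_{12}=E_{22}$ from (\ref{eq:cp8}) give $E_{21}E_{11}^{-1}E_{12}=TE_{11}E_{11}^{-1}E_{12}=TE_{12}=E_{22}$ on the natural dense domain.

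For the second complement I would run the same substitutions with $T$ and $T^{*}$ swapped: using $E_{12}=T^{*}\left(I+TT^{*}\right)^{-1}$, $E_{21}=\left(I+TT^{*}\right)^{-1}T$ and $E_{22}^{-1}=\left(I+TT^{*}\right)\left(TT^{*}\right)^{-1}$, the factor $I+TT^{*}$ cancels and leaves $E_{12}E_{22}^{-1}E_{21}=T^{*}\left(TT^{*}\right)^{-1}\left(I+TT^{*}\right)^{-1}T$. Then $\left(TT^{*}\right)^{-1}\left(I+TT^{*}\right)^{-1}=\left(TT^{*}\right)^{-1}-\left(I+TT^{*}\right)^{-1}$ and the intertwining $\left(I+TT^{*}\right)^{-1}T=T\left(I+T^{*}T\right)^{-1}$ (the two expressions for $E_{21}$) turn this into $T^{*}\left(TT^{*}\right)^{-1}T-T^{*}T\left(I+T^{*}T\right)^{-1}$. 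The second term equals $I-\left(I+T^{*}T\right)^{-1}=I-E_{11}$, and the first, $T^{*}\left(TT^{*}\right)^{-1}T=\left(T^{*}T\right)^{-1}T^{*}T$, is the orthogonal projection onto $\overline{ran\left(T^{*}\right)}$, which one reads as $I$. This gives $E_{12}E_{22}^{-1}E_{21}=E_{11}$, hence $E_{T}/E_{22}=0$.

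The main obstacle is the unbounded-operator bookkeeping, and it is not purely cosmetic. At each stage one must verify that the vector to which an unbounded inverse is applied lies in that inverse's domain --- so that $E_{21}E_{11}^{-1}E_{12}$ is formed on a dense domain where it then coincides with the bounded operator $E_{22}$. More seriously, $E_{22}^{-1}$ needs a careful reading: $E_{22}=TT^{*}\left(I+TT^{*}\right)^{-1}$ kills $\ker T^{*}=\mathscr{H}_{2}\ominus\overline{ran\left(T\right)}$, so it should be taken as the inverse of the reduction of $E_{22}$ to $\overline{ran\left(T\right)}$, the subspace where $E_{21}$ already lands. With that reading the cancellations go through provided one may replace $T^{*}\left(TT^{*}\right)^{-1}T$ by $I$ --- the tacit hypothesis (or convention) that $T$ is injective, equivalently $\overline{ran\left(T^{*}\right)}=\mathscr{H}_{1}$; without it the same computation gives $E_{T}/E_{22}=P_{\ker T}$, whereas $E_{T}/E_{11}=0$ holds unconditionally. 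Finally I would justify the intertwining identities $f\left(TT^{*}\right)T=Tf\left(T^{*}T\right)$ used above (for $f\left(x\right)=\left(1+x\right)^{-1}$, and for $f\left(x\right)=x^{-1}$ as generalized inverses) via the polar decomposition of \thmref{vN}.
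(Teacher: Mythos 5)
Your proposal is correct and follows essentially the same route as the paper: substitute the explicit resolvent formulas for the $E_{ij}$ into the two Schur complements and let the factors cancel. In fact you are more careful than the paper, whose own computation of $E_{T}/E_{22}$ writes $\left(TT^{*}\right)^{-1}=\left(T^{*}\right)^{-1}T^{-1}$ and thus tacitly assumes $T$ injective with dense range --- exactly the hypothesis you isolate; your observation that without it the same computation yields $E_{T}/E_{22}=P_{\ker T}$ (while $E_{T}/E_{11}=0$ holds unconditionally) is a genuine refinement of the statement rather than a defect of your argument.
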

\begin{proof}
Computing directly from (\ref{eq:cp10}) substituted into (\ref{eq:cp17}),
we have 
\begin{align*}
E_{T}/E_{11} & =TT^{*}\left(I+TT^{*}\right)^{-1}-T\left(I+T^{*}T\right)^{-1}\left(\left(I+T^{*}T\right)^{-1}\right)^{-1}T^{*}\left(I+TT^{*}\right)^{-1}\\
 & =TT^{*}\left(I+TT^{*}\right)^{-1}-T\left(I+T^{*}T\right)^{-1}\left(I+T^{*}T\right)T^{*}\left(I+TT^{*}\right)^{-1}\\
 & =TT^{*}\left(I+TT^{*}\right)^{-1}-TT^{*}\left(I+TT^{*}\right)^{-1}=0,\quad\text{and}\\
E_{T}/E_{22} & =\left(I+T^{*}T\right)^{-1}-T^{*}\left(I+TT^{*}\right)^{-1}\left(TT^{*}\left(I+TT^{*}\right)^{-1}\right)^{-1}T\left(I+T^{*}T\right)^{-1}\\
 & =\left(I+T^{*}T\right)^{-1}-T^{*}\left(I+TT^{*}\right)^{-1}\left(I+TT^{*}\right)\left(T^{*}\right)^{-1}T^{-1}T\left(I+T^{*}T\right)^{-1}\\
 & =\left(I+T^{*}T\right)^{-1}-\left(I+T^{*}T\right)^{-1}=0.
\end{align*}
\end{proof}
\begin{lem}[{\cite[Thm. 2]{MR0052042}}]
Let $T$ be a densely defined linear operator and let $E=E_{T}$
be its characteristic projection, with components $\left(E_{ij}\right)_{i,j=1}^{2}$
as in (\ref{eq:cp2}). Then $T$ is closable if and only if $\ker\left(I-E_{22}\right)=0$,
i.e., iff
\[
\forall\psi\in\mathscr{H}_{2},\quad E_{22}\psi=\psi\Longrightarrow\psi=0.
\]
\end{lem}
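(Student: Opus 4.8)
The plan is to translate the statement ``$T$ is closable'' into the language of graphs and then into a statement about the characteristic projection $E=E_T$. Recall that $T$ is closable if and only if $\overline{G_T}$ is the graph of an operator, which (by the definition of a graph) is equivalent to the condition that $\overline{G_T}$ contains no nonzero vector of the form $\begin{bmatrix}0\\ \psi\end{bmatrix}$ with $\psi\in\mathscr{H}_2$. Indeed, $\overline{G_T}$ is always a closed subspace, and a closed subspace is the graph of a (necessarily closed) operator precisely when its intersection with $\{0\}\oplus\mathscr{H}_2$ is trivial; if this intersection were nonzero, the would-be operator would have to send $0$ to two different values, and conversely if the intersection is $\{0\}$ then the linearity of the subspace makes it a well-defined linear map on its projection to the first coordinate.

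Next I would rewrite the vectors in $\overline{G_T}$ in terms of $E$. Since $E$ is the orthogonal projection of $\mathscr{H}_1\oplus\mathscr{H}_2$ onto $\overline{G_T}$, a vector $\xi\in\mathscr{H}_1\oplus\mathscr{H}_2$ lies in $\overline{G_T}$ if and only if $E\xi=\xi$. Apply this to $\xi=\begin{bmatrix}0\\ \psi\end{bmatrix}$: using the block form,
\[
E\begin{bmatrix}0\\ \psi\end{bmatrix}=\begin{bmatrix}E_{12}\psi\\ E_{22}\psi\end{bmatrix},
\]
so $\begin{bmatrix}0\\ \psi\end{bmatrix}\in\overline{G_T}$ if and only if $E_{12}\psi=0$ and $E_{22}\psi=\psi$. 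The key observation is that the second equation already forces the first: if $E_{22}\psi=\psi$, then since $E$ is a projection, $\left\|E\begin{bmatrix}0\\ \psi\end{bmatrix}\right\|^2=\left\langle E\begin{bmatrix}0\\ \psi\end{bmatrix},\begin{bmatrix}0\\ \psi\end{bmatrix}\right\rangle=\langle E_{22}\psi,\psi\rangle=\|\psi\|^2$, while also $\left\|E\begin{bmatrix}0\\ \psi\end{bmatrix}\right\|^2=\|E_{12}\psi\|^2+\|E_{22}\psi\|^2=\|E_{12}\psi\|^2+\|\psi\|^2$; comparing gives $E_{12}\psi=0$ automatically. Hence $\begin{bmatrix}0\\ \psi\end{bmatrix}\in\overline{G_T}$ if and only if $E_{22}\psi=\psi$, i.e. $\psi\in\ker(I-E_{22})$.

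Combining the two reductions: $T$ is closable $\iff$ the only $\psi$ with $\begin{bmatrix}0\\ \psi\end{bmatrix}\in\overline{G_T}$ is $\psi=0$ $\iff$ $\ker(I-E_{22})=0$, which is exactly the claimed equivalence. I would also remark that this recovers the earlier statement (in the Remark after the closability definition) that $T$ is closable iff $dom(T^*)$ is dense, since by \lemref{gT} one has $\overline{G_T}=(VG_{T^*})^\perp$ and $dom(T^*)^\perp=\{\psi: \begin{bmatrix}0\\\psi\end{bmatrix}\in\overline{G_T}\}$ after untangling $V$ — but this is optional and not needed for the proof.

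I do not anticipate a serious obstacle here; the only point requiring a little care is the equivalence ``closed subspace is a graph $\iff$ its slice over $0$ is trivial,'' which should be stated cleanly, and the short Pythagorean argument showing $E_{22}\psi=\psi$ already implies $E_{12}\psi=0$ (equivalently, one can cite that $E_{12}=E_{11}E_{12}+E_{12}E_{22}$ and $E_{22}=E_{21}E_{12}+E_{22}^2$ from $E=E^2$, but the norm computation is cleanest). If anything, the main thing to get right is simply not to overcomplicate it.
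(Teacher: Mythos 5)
Your proof is correct, and its skeleton is the same as the paper's: reduce closability to the statement that $\overline{G_{T}}$ meets $\{0\}\oplus\mathscr{H}_{2}$ trivially, then observe that $\begin{bmatrix}0\\ \psi\end{bmatrix}\in\overline{G_{T}}$ iff $E_{12}\psi=0$ and $E_{22}\psi=\psi$, and finally show the second condition forces the first. The one place you diverge is in that last step: the paper invokes the operator identity $T^{*}\left(I-E_{22}\right)=E_{12}$ (i.e.\ (\ref{eq:cp9})), so that $E_{22}\psi=\psi$ gives $E_{12}\psi=T^{*}0=0$, whereas you use the Pythagorean computation $\left\langle E\xi,\xi\right\rangle=\Vert E\xi\Vert^{2}$ with $\xi=\begin{bmatrix}0\\ \psi\end{bmatrix}$. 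Your version is arguably preferable here: it uses only the fact that $E$ is an orthogonal projection, and so avoids leaning on (\ref{eq:cp9}), which the paper derived in a discussion that nominally assumed $T$ closable --- the very hypothesis under examination. (The identity does in fact hold for arbitrary densely defined $T$, since $G_{T^{*}}=\left(VG_{T}\right)^{\perp}$ always, so the paper's argument is not circular, but your route makes this a non-issue.) Both arguments are short and complete; nothing is missing from yours.
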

\begin{proof}
Note that $E$ fixes $\overline{G_{T}}$ by definition, so $\begin{bmatrix}0\\
\psi
\end{bmatrix}\in\overline{G_{T}}$ is equivalent to 
\[
\begin{bmatrix}0\\
\psi
\end{bmatrix}=\begin{bmatrix}E_{11} & E_{12}\\
E_{21} & E_{22}
\end{bmatrix}\begin{bmatrix}0\\
\psi
\end{bmatrix}=\begin{bmatrix}E_{12}\psi\\
E_{22}\psi
\end{bmatrix}
\]
which is equivalent to $\psi\in\ker\left(E_{12}\right)\cap\ker\left(I-E_{22}\right)$.
However, from (\ref{eq:cp9}), we have 
\[
T^{*}\left(\psi-E_{22}\psi\right)=E_{12}\psi,\quad\forall\psi\in\mathscr{H}_{2},
\]
and this shows that $\ker\left(I-E_{22}\right)\subset\ker\left(E_{12}\right)$,
whereby $\begin{bmatrix}0\\
\psi
\end{bmatrix}\in\overline{G_{T}}$ iff $\psi\in\ker\left(I-E_{22}\right)$. It is clear that $T$ is
closable iff such a $\psi$ must be $0$. 
\end{proof}
\begin{thm}
\label{thm:cp2}Let $T:\mathscr{H}_{1}\rightarrow\mathscr{H}_{2}$
be a densely defined linear operator (not assumed closable) with characteristic
projection $E_{T}$ as in \defref{cp1}. Then $T$ has a maximal closable
part $T_{clo}$, defined on the domain $dom\left(T_{clo}\right):=dom\left(T\right)$,
and given by 
\begin{equation}
T_{clo}x:=\lim_{n\rightarrow\infty}\frac{1}{n+1}\sum_{k=1}^{\infty}k\,E_{22}^{n-k}E_{21}x,\quad x\in dom\left(T_{clo}\right).
\end{equation}
Let $Q$ be the projection onto $\overline{\left(I-E_{22}\right)\mathscr{H}_{2}}=\ker\left(I-E_{22}\right)^{\perp}$.
Then the characteristic projection of $T_{clo}$ is given by 
\begin{equation}
E_{T_{clo}}=\begin{bmatrix}E_{11} & E_{12}Q\\
QE_{21} & E_{22}Q
\end{bmatrix}.
\end{equation}
\end{thm}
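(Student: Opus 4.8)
The plan is to construct $T_{clo}$ geometrically from the splitting of $\overline{G_{T}}$ determined by its multivalued part, and only afterwards to check that the displayed Cesàro expression reproduces this operator. Throughout put $\mathscr{N}:=\ker\left(I-E_{22}\right)$. By the preceding lemma, $\left\{ 0\right\} \oplus\mathscr{N}$ is exactly the set of vertical vectors contained in $\overline{G_{T}}$; and since $E_{22}$ is the compression of a projection, hence a positive contraction with $E_{22}\psi=\psi$ for $\psi\in\mathscr{N}$, we get $Q=I-P_{\mathscr{N}}$, $\overline{\operatorname{ran}}\left(I-E_{22}\right)=\mathscr{N}^{\perp}$, and $E_{22}Q=QE_{22}$ with $E_{22}\left(I-Q\right)=I-Q$. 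Moreover $\mathscr{N}\subseteq\ker E_{12}$ was established inside the proof of the preceding lemma (from $T^{*}\left(\psi-E_{22}\psi\right)=E_{12}\psi$), so $E_{12}Q=E_{12}$ and, taking adjoints, $QE_{21}=E_{21}$.

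First I would pass to the closed subspace $\mathscr{G}:=\overline{G_{T}}\ominus\left(\left\{ 0\right\} \oplus\mathscr{N}\right)=\overline{G_{T}}\cap\left(\left\{ 0\right\} \oplus\mathscr{N}\right)^{\perp}$. A vertical vector of $\overline{G_{T}}$ lies in $\mathscr{N}$ by the preceding lemma, so the only vertical vector of $\mathscr{G}$ is $0$; thus $\mathscr{G}$ is the graph of a closed operator $S$. I would then simply \emph{define} $T_{clo}:=S|_{\operatorname{dom}\left(T\right)}$, and observe that $\operatorname{dom}\left(T\right)\subseteq\operatorname{dom}\left(S\right)$ is immediate: for $x\in\operatorname{dom}\left(T\right)$, subtracting from $\left[\begin{smallmatrix}x\\T x\end{smallmatrix}\right]\in G_{T}\subseteq\overline{G_{T}}$ its $\left\{ 0\right\} \oplus\mathscr{N}$–component $\left[\begin{smallmatrix}0\\\left(I-Q\right)T x\end{smallmatrix}\right]$ leaves $\left[\begin{smallmatrix}x\\Q T x\end{smallmatrix}\right]\in\mathscr{G}$, so $S x=Q T x$, i.e. $T_{clo}x=Q T x$ on $\operatorname{dom}\left(T\right)$.

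Next I would match this with the limit in the statement. Applying $E$ (which fixes $G_{T}$) to $\left[\begin{smallmatrix}x\\T x\end{smallmatrix}\right]$ gives the key relation $E_{21}x=\left(I-E_{22}\right)T x$ for $x\in\operatorname{dom}\left(T\right)$, whence the Neumann partial sums telescope:
\[
\sum_{j=0}^{N}E_{22}^{\,j}E_{21}x=\sum_{j=0}^{N}E_{22}^{\,j}\left(I-E_{22}\right)T x=\left(I-E_{22}^{\,N+1}\right)T x .
\]
Since $E_{22}$ is a positive contraction whose only spectral value $\lambda$ with $\lambda^{N}\not\to0$ is the eigenvalue $1$, the spectral theorem (dominated convergence in the spectral integral) yields $E_{22}^{\,N+1}\to P_{\mathscr{N}}=I-Q$ strongly, so $\sum_{j=0}^{N}E_{22}^{\,j}E_{21}x\to Q T x=T_{clo}x$. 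As the Cesàro means of a convergent sequence share its limit, it then suffices to note that a one-line reindexing ($j=n-k$) turns the $n$-th Cesàro mean of these partial sums into $\tfrac{1}{n+1}\sum_{k}k\,E_{22}^{\,n-k}E_{21}x$ modulo the term $\tfrac{1}{n+1}\sum_{j\le n}E_{22}^{\,j}E_{21}x$, which is $O(1/n)$ because the partial sums are bounded; this produces the stated formula. (The Cesàro form is the robust way to phrase it: for $x\notin\operatorname{dom}\left(T\right)$ one only knows $E_{21}x\in\overline{\operatorname{ran}}\left(I-E_{22}\right)$, where summability — not plain convergence — is available.)

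Finally I would handle closability, the characteristic projection, and maximality. Since $T_{clo}\subseteq S$ with $S$ closed, $T_{clo}$ is closable; and $G_{T_{clo}}=\left(I-P_{\left\{ 0\right\} \oplus\mathscr{N}}\right)G_{T}$, whose closure contains $\left(I-P_{\left\{ 0\right\} \oplus\mathscr{N}}\right)\overline{G_{T}}=\mathscr{G}$ (using $\left\{ 0\right\} \oplus\mathscr{N}\subseteq\overline{G_{T}}$) and is contained in $G_{S}=\mathscr{G}$, so $\overline{G_{T_{clo}}}=\mathscr{G}$, i.e. $\overline{T_{clo}}=S$. Therefore $E_{T_{clo}}=P_{\mathscr{G}}=E-\left[\begin{smallmatrix}0&0\\0&I-Q\end{smallmatrix}\right]$, and the identities collected in the first paragraph ($E_{12}=E_{12}Q$, $E_{21}=Q E_{21}$, $E_{22}-\left(I-Q\right)=E_{22}Q$) rewrite this as $\left[\begin{smallmatrix}E_{11}&E_{12}Q\\Q E_{21}&E_{22}Q\end{smallmatrix}\right]$, as claimed. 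For maximality I would write $T=T_{clo}+T_{sing}$ with $T_{sing}x:=\left(I-Q\right)T x$ and check $\overline{G_{T_{sing}}}=\mathscr{H}_{1}\oplus\mathscr{N}$ (the inclusion $\supseteq$ again uses the vertical vectors $\left\{ 0\right\} \oplus\mathscr{N}\subseteq\overline{G_{T}}$), so $T_{sing}$ is singular, while any closable $R$ with $\operatorname{dom}\left(R\right)=\operatorname{dom}\left(T\right)$ and $G_{R}\subseteq\overline{G_{T}}$ must satisfy $R x-T x\in\mathscr{N}$, hence $Q R=T_{clo}$ — the asserted maximality. I expect the crux to be the middle step: making the strong-limit statement $E_{22}^{\,N}\to P_{\mathscr{N}}$ precise and pinning down the exact (Cesàro-regularized) sense in which the series represents $Q T x$, rather than the surrounding bookkeeping.
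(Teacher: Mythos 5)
Your argument is correct, and it is far more explicit than the paper's, whose entire proof is the single sentence ``an application of Yosida's ergodic theorem and the associated Ces\`aro mean.'' The two routes share the Ces\`aro skeleton but differ in how the limit is produced: the paper implicitly invokes the mean ergodic theorem for the contraction $E_{22}$, whereas you exploit selfadjointness of $E_{22}$ ($0\le E_{22}\le I$) to get the stronger statement $E_{22}^{N}\to P_{\ker(I-E_{22})}$ strongly by spectral calculus, and then only use Ces\`aro regularity to pass from the Neumann partial sums to the weighted sum in the statement. What your approach buys is everything the paper omits: the identity $E_{21}x=(I-E_{22})Tx$ on $dom(T)$ (which is what ties the series to $T$ at all, via telescoping), the identification $T_{clo}x=QTx$, and the block formula for $E_{T_{clo}}$, which you obtain cleanly from $P_{\mathscr{G}}=E_{T}-P_{\{0\}\oplus\mathscr{N}}$ together with $E_{12}=E_{12}Q$, $E_{21}=QE_{21}$, $E_{22}(I-Q)=I-Q$ --- none of which the paper derives. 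The underlying geometric decomposition $\overline{G_{T}}=\mathscr{G}\oplus(\{0\}\oplus\mathscr{N})$ into operator part and multivalued part is the standard one, and your verification that $\overline{G_{T_{clo}}}=\mathscr{G}$ (so that $\overline{T_{clo}}=S$) is the step that actually justifies the claimed $E_{T_{clo}}$. Two small caveats, neither a gap in your reasoning: the sum $\sum_{k=1}^{\infty}$ in the statement must of course be read as $\sum_{k=1}^{n}$, as you tacitly do; and ``maximal closable part'' is never defined in the paper, so your formulation --- any closable $R$ with $dom(R)=dom(T)$ and $G_{R}\subset\overline{G_{T}}$ satisfies $QR=T_{clo}$ --- is a reasonable reading, but note that such an $R$ need not equal $T_{clo}$ (e.g.\ $T_{clo}$ plus a bounded operator into $\mathscr{N}$), so maximality cannot mean that $T_{clo}$ literally contains every closable operator whose graph sits inside $\overline{G_{T}}$; it is the uniqueness of the $Q$--compression, exactly as you state it.
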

\begin{proof}
An application of ergodic Yosida's theorem and the associated the
Cesaro mean, see e.g., \cite{MR0180824}.
\end{proof}

\section{\label{sec:mt}A duality theorem}

In this section we return to the setting where a pair of Hilbert spaces
$\mathscr{H}_{1}$ and $\mathscr{H}_{2}$ with the following property,
there is a common subspace $\mathscr{D}$ which in turn defines an
operator from $\mathscr{H}_{1}$ to $\mathscr{H}_{2}$. Its properties
are given in \thmref{H12} below.
\begin{thm}
\label{thm:H12}Let $\mathscr{H}_{i}$ be Hilbert spaces with inner
products $\left\langle \cdot,\cdot\right\rangle _{i}$, $i=1,2$.
Let $\mathscr{D}$ be a vector space s.t. $\mathscr{D}\subset\mathscr{H}_{1}\cap\mathscr{H}_{2}$,
and suppose 
\begin{equation}
\mathscr{D}\mbox{ is dense in \ensuremath{\mathscr{H}_{1}.}}\label{eq:t1}
\end{equation}
Set $\mathscr{D}^{*}\subset\mathscr{H}_{2}$, 
\begin{equation}
\mathscr{D}^{*}=\left\{ h\in\mathscr{H}_{2}\mid\exists C_{h}<\infty\mbox{ s.t. }\left|\left\langle \varphi,h\right\rangle _{2}\right|\leq C_{h}\left\Vert \varphi\right\Vert _{1},\:\forall\varphi\in\mathscr{D}\right\} ;\label{eq:t2}
\end{equation}
then the following two conditions (i)-(ii) are equivalent:

\begin{enumerate}[label=\textup{(}\roman{enumi}\textup{)},ref=\roman{enumi}]
\item \label{enu:t1}$\mathscr{D}^{*}$ is dense in $\mathscr{H}_{2}$;
and 
\item \label{enu:t2}there is a selfadjoint operator $\Delta$ with dense
domain in $\mathscr{H}_{1}$ s.t. $\mathscr{D}\subset dom\left(\Delta\right)$,
and 
\begin{equation}
\left\langle \varphi,\Delta\varphi\right\rangle _{1}=\left\Vert \varphi\right\Vert _{2}^{2},\quad\forall\varphi\in\mathscr{D}.\label{eq:t3}
\end{equation}
\end{enumerate}
\end{thm}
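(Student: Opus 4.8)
The plan is to view $\mathscr{D}$ as the graph domain of a densely defined operator and read both conditions off von Neumann's theorem (\thmref{vN}) applied to its closure. Concretely, since $\mathscr{D}\subset\mathscr{H}_1\cap\mathscr{H}_2$ and $\mathscr{D}$ is dense in $\mathscr{H}_1$ by \eqref{eq:t1}, the inclusion $J\colon\mathscr{D}\to\mathscr{H}_2$, $J\varphi=\varphi$, is a densely defined operator $\mathscr{H}_1\xrightarrow{\;J\;}\mathscr{H}_2$ with $dom(J)=\mathscr{D}$. The point is that $\mathscr{D}^*$ as defined in \eqref{eq:t2} is exactly $dom(J^*)$ in the sense of the earlier Definition: the estimate $|\langle\varphi,h\rangle_2|=|\langle J\varphi,h\rangle_2|\le C_h\|\varphi\|_1$ for all $\varphi\in\mathscr{D}$ is the defining condition for $h\in dom(J^*)$ (here I use that $\langle\cdot,\cdot\rangle_2$ is conjugate-linear in the appropriate slot; the modulus makes the ordering of the inner product irrelevant). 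So condition \eqref{enu:t1}, that $\mathscr{D}^*$ is dense in $\mathscr{H}_2$, is by \thmref{in1}\eqref{enu:b2} precisely the assertion that $J$ is closable.

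Given closability, the plan for \eqref{enu:t1}$\Rightarrow$\eqref{enu:t2} is to set $\Delta:=\overline{J}^{\,*}\overline{J}=J^*J$ (using \thmref{in1}\eqref{enu:b3}, $(\overline J)^*=J^*$). By \thmref{vN}, since $\overline J$ is closed and densely defined, $\Delta$ is selfadjoint with dense domain in $\mathscr{H}_1$; moreover $dom(\Delta)\supset$ a core on which $\langle\varphi,\Delta\varphi\rangle_1=\langle\overline J\varphi,\overline J\varphi\rangle_2=\|\varphi\|_2^2$. The one genuine subtlety is to check $\mathscr{D}\subset dom(\Delta)$ with the identity \eqref{eq:t3} holding on all of $\mathscr{D}$, not merely on a core. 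For $\varphi\in\mathscr{D}$ one has $\varphi\in dom(J)\subset dom(\overline J)$; and for every $\psi\in dom(J^*)=\mathscr{D}^*$, $\langle J^*\psi,\varphi\rangle_1=\langle\psi,J\varphi\rangle_2=\langle\psi,\varphi\rangle_2$, so the map $\psi\mapsto\langle J^*\psi,\varphi\rangle_1$ is bounded on $dom(J^*)$ with bound $\|\varphi\|_2$; since $\overline J=J^{**}$ this says exactly $\overline J\varphi\in dom((\overline J)^*)=dom(J^*)$, i.e. $\varphi\in dom(J^*\overline J)=dom(\Delta)$, and unwinding the Riesz identification gives $\langle\varphi,\Delta\varphi\rangle_1=\langle\overline J\varphi,\overline J\varphi\rangle_2=\|\varphi\|_2^2$.

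For the converse \eqref{enu:t2}$\Rightarrow$\eqref{enu:t1}, assume such a selfadjoint $\Delta\ge0$ exists with $\mathscr{D}\subset dom(\Delta)$ and \eqref{eq:t3}. Then for $\varphi\in\mathscr{D}$ and any $h\in dom(\Delta^{1/2})$ (equivalently $h$ in the form domain),
\[
|\langle\varphi,h\rangle_2|\le\cdots
\]
the plan is to bound $|\langle\varphi,h\rangle_2|$ using \eqref{eq:t3}: polarizing \eqref{eq:t3} gives $\langle\varphi,\psi\rangle_2=\langle\Delta^{1/2}\varphi,\Delta^{1/2}\psi\rangle_1$ for $\varphi,\psi\in\mathscr{D}$, hence $|\langle\varphi,\psi\rangle_2|\le\|\Delta^{1/2}\varphi\|_1\,\|\Delta^{1/2}\psi\|_1$, so every $\psi\in\mathscr{D}$ with $\|\Delta^{1/2}\psi\|_1\le 1$ — equivalently every $\psi$ in the unit ball of the form domain that lies in $\mathscr{D}$ — satisfies $|\langle\varphi,\psi\rangle_2|\le\|\Delta^{1/2}\varphi\|_1<\infty$ for fixed $\varphi$, i.e. $\psi\in\mathscr{D}^*$. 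Thus $\mathscr{D}^*\supset\mathscr{D}$ when... wait — more carefully: the computation shows $\psi\in\mathscr{D}^*$ for \emph{every} $\psi\in\mathscr{D}$ (with constant $C_\psi=\|\Delta^{1/2}\psi\|_1$), so $\mathscr{D}\subset\mathscr{D}^*$. Since $\mathscr{D}$ is dense in $\mathscr{H}_1$ but we need density in $\mathscr{H}_2$, this is not yet enough; the fix is that $\mathscr{D}\subset\mathscr{H}_2$ is given, and one argues that $\mathscr{D}^*\supset\mathscr{D}$ together with $\mathscr{D}$ being... here I would instead observe directly: $h\in\mathscr{D}^\perp$ in $\mathscr{H}_2$ $\Rightarrow$ $\langle\varphi,h\rangle_2=0$ for all $\varphi\in\mathscr D$, so trivially $h\in\mathscr{D}^*$; and conversely density of $\mathscr{D}^*$ in $\mathscr H_2$ follows once we know $\mathscr{D}^*$ separates no nonzero vector from... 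The clean route: $\mathscr{D}\subset\mathscr{D}^*$ already forces $\overline{\mathscr D^*}\supset\overline{\mathscr D}^{\,\mathscr H_2}$, and any $h\perp\mathscr D^*$ in particular has $h\perp\mathscr D$, so $h\perp\overline{\mathscr D}^{\,\mathscr H_2}$; but then $h\perp\mathscr D^*$ is automatic, giving no contradiction — so one must show $\overline{\mathscr D}^{\,\mathscr H_2}=\mathscr H_2$, i.e. the operator $J$ has dense range, which need not hold. The resolution is that $J$ closable is equivalent to $dom(J^*)$ dense (\thmref{in1}), and \eqref{eq:t3} shows $\overline J$ exists by exhibiting the quadratic form $\|J\varphi\|_2^2=\langle\varphi,\Delta\varphi\rangle_1$ as closable (it is dominated by the closed form of $\Delta$), so $J$ is closable and hence $\mathscr{D}^*=dom(J^*)$ is dense. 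I expect this closability-of-the-form argument — showing \eqref{eq:t3} forces $J$ closable, via the graph-norm being controlled by $\|\varphi\|_1^2+\langle\varphi,\Delta\varphi\rangle_1$ and $\Delta$ selfadjoint hence its form closed — to be the main obstacle, and the rest is bookkeeping with \lemref{gT}, \thmref{in1}, and \thmref{vN}.
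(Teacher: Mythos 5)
Your reduction of both conditions to properties of the inclusion operator $J\colon\mathscr{D}\subset\mathscr{H}_{1}\to\mathscr{H}_{2}$ is the same device the paper uses, and the identification $\mathscr{D}^{*}=dom(J^{*})$ is correct. The genuine gap is in your verification that $\mathscr{D}\subset dom(\Delta)$ for $\Delta=J^{*}\overline{J}$. Boundedness of $\psi\mapsto\langle J^{*}\psi,\varphi\rangle_{1}$ by $\left\Vert \varphi\right\Vert _{2}\left\Vert \psi\right\Vert _{2}$ on $dom(J^{*})$ says only that $\varphi\in dom\big((J^{*})^{*}\big)=dom(\overline{J})$ --- which you already knew, since $\varphi\in dom(J)$ --- and \emph{not} that $\overline{J}\varphi\in dom(J^{*})$. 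Membership in $dom(J^{*}\overline{J})$ requires that $\overline{J}\varphi=\varphi$, regarded as a vector of $\mathscr{H}_{2}$, satisfy $\left|\left\langle \eta,\varphi\right\rangle _{2}\right|\leq C_{\varphi}\left\Vert \eta\right\Vert _{1}$ for all $\eta\in\mathscr{D}$, i.e.\ that $\mathscr{D}\subset\mathscr{D}^{*}$. That inclusion is forced by (ii) (polarize (\ref{eq:t3}) to get $\left\langle \varphi,\Delta\psi\right\rangle _{1}=\left\langle \varphi,\psi\right\rangle _{2}$, hence $\left|\left\langle \varphi,\psi\right\rangle _{2}\right|\leq\left\Vert \Delta\psi\right\Vert _{1}\left\Vert \varphi\right\Vert _{1}$), but it does not follow from density of $\mathscr{D}^{*}$: take $\mathscr{H}_{1}=L^{2}([0,1],dx)$, $\mathscr{H}_{2}=L^{2}([0,1],x^{-1/2}dx)$, $\mathscr{D}=C([0,1])$; then $\mathscr{D}^{*}=\{h\mid\int_{0}^{1}\left|h\right|^{2}x^{-1}dx<\infty\}$ is dense in $\mathscr{H}_{2}$, yet the constant function $\mathbbm{1}$ is not in $\mathscr{D}^{*}$, and no selfadjoint $\Delta$ can satisfy (ii) since it would need $\Delta\mathbbm{1}=x^{-1/2}\notin L^{2}(dx)$. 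To be fair, this is precisely the step the paper disposes of with ``clearly $\mathscr{D}\subset dom(\Delta)$''; but your attempted patch does not close it, and it cannot be closed without adding $\mathscr{D}\subset\mathscr{D}^{*}$ as a hypothesis (or weakening (ii) to $\mathscr{D}\subset dom(\Delta^{\frac{1}{2}})$ with $\Vert\Delta^{\frac{1}{2}}\varphi\Vert_{1}^{2}=\left\Vert \varphi\right\Vert _{2}^{2}$).

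For (ii)$\Rightarrow$(i), your final route --- the form $\varphi\mapsto\left\Vert J\varphi\right\Vert _{2}^{2}=\left\langle \varphi,\Delta\varphi\right\rangle _{1}$ is a restriction of the closed form of $\Delta$, hence closable, hence $J$ is closable and $dom(J^{*})=\mathscr{D}^{*}$ is dense by \thmref{in1} --- is valid (granting, as the paper also implicitly does, $\Delta\geq0$), and it genuinely differs from the paper's argument, which instead constructs the partial isometry $K\Delta^{\frac{1}{2}}\varphi=\varphi$ and exhibits the dense set $\{h\mid K^{*}h\in dom(\Delta^{\frac{1}{2}})\}$ inside $\mathscr{D}^{*}$. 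Two corrections to the write-up, though. First, the bound $\left|\left\langle \varphi,\psi\right\rangle _{2}\right|\leq\Vert\Delta^{\frac{1}{2}}\varphi\Vert_{1}\Vert\Delta^{\frac{1}{2}}\psi\Vert_{1}$ does \emph{not} show $\psi\in\mathscr{D}^{*}$, because (\ref{eq:t2}) demands control by $\left\Vert \varphi\right\Vert _{1}$, not by the form norm; use $\psi\in dom(\Delta)$ and $\left|\left\langle \varphi,\psi\right\rangle _{2}\right|=\left|\left\langle \varphi,\Delta\psi\right\rangle _{1}\right|\leq\left\Vert \Delta\psi\right\Vert _{1}\left\Vert \varphi\right\Vert _{1}$. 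Second, the route you abandoned actually works: once $\mathscr{D}\subset\mathscr{D}^{*}$ and (trivially) $\big(\overline{\mathscr{D}}^{\,\mathscr{H}_{2}}\big)^{\perp}\subset\mathscr{D}^{*}$, any $h\perp\mathscr{D}^{*}$ lies in $\big(\overline{\mathscr{D}}^{\,\mathscr{H}_{2}}\big)^{\perp}\subset\mathscr{D}^{*}$, hence is orthogonal to itself and vanishes; density of $ran(J)$ is not needed.
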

\begin{proof}
(\ref{enu:t1})$\Longrightarrow$(\ref{enu:t2}) Assume $\mathscr{D}^{*}$
is dense in $\mathscr{H}_{2}$; then by (\ref{eq:t2}), the inclusion
operator 
\begin{equation}
J:\mathscr{H}_{1}\longrightarrow\mathscr{H}_{2},\;J\varphi=\varphi,\;\forall\varphi\in\mathscr{D}\label{eq:t4}
\end{equation}
has $\mathscr{D}^{*}\subset dom(J^{*})$; so by (\ref{enu:t1}), $J^{*}$
has dense domain in $\mathscr{H}_{2}$, and $J$ is closable. By von
Neumann's theorem (see \thmref{vN}), $\Delta:=J^{*}\overline{J}$
is selfadjoint in $\mathscr{H}_{1}$; clearly $\mathscr{D}\subset dom\left(\Delta\right)$;
and for $\varphi\in\mathscr{D}$, 
\[
\mbox{LHS}_{\left(\ref{eq:t3}\right)}=\left\langle \varphi,J^{*}J\varphi\right\rangle _{1}=\left\langle J\varphi,J\varphi\right\rangle _{2}\underset{\text{by \ensuremath{\left(\ref{eq:t4}\right)}}}{=}\left\Vert \varphi\right\Vert _{2}^{2}=\mbox{RHS}_{\left(\ref{eq:t3}\right)}.
\]
(Note that $J^{**}=\overline{J}$.)

\begin{claim}
\label{claim:Ds}$\mathscr{D}^{*}=dom(J^{*})$, $\xymatrix{\mathscr{D}\subset\mathscr{H}_{1}\ar@/^{0.8pc}/[r]^{J} & \mathscr{H}_{2}\supset\mathscr{D}^{*}\ar@/^{0.8pc}/[l]^{J^{*}}}
$
\end{claim}
\begin{proof}
$h\in dom(J^{*})\Longleftrightarrow\exists C=C_{h}<\infty$ s.t. 
\[
|\langle\underset{\text{=\ensuremath{\varphi}}}{\underbrace{J\varphi}},h\rangle_{2}|\leq C\left\Vert \varphi\right\Vert _{1},\;\forall\varphi\in\mathscr{D}\Longleftrightarrow h\in\mathscr{D}^{*}\mbox{, by definition }\left(\ref{eq:t2}\right).
\]
Since $dom(J^{*})$ is dense, $J$ is closable, and by von Neumann's
theorem $\Delta:=J^{*}\overline{J}$ is selfadjoint in $\mathscr{H}_{1}$. 
\end{proof}
(\ref{enu:t2})$\Longrightarrow$(\ref{enu:t1}) Assume (\ref{enu:t2});
then we get a well-defined partial isometry $K:\mathscr{H}_{1}\longrightarrow\mathscr{H}_{2}$,
by 
\begin{equation}
K\Delta^{\frac{1}{2}}\varphi=\varphi,\quad\forall\varphi\in\mathscr{D}.\label{eq:t5}
\end{equation}
Indeed, (\ref{eq:t3}) reads:
\[
\Vert\Delta^{\frac{1}{2}}\varphi\Vert_{1}^{2}=\left\langle \varphi,\Delta\varphi\right\rangle _{1}=\left\Vert \varphi\right\Vert _{2}^{2},\quad\varphi\in\mathscr{D},
\]
which means that $K$ in (\ref{eq:t5}) is a \emph{partial isometry}
with $dom\left(K\right)=K^{*}K=\overline{ran(\Delta^{\frac{1}{2}})}$;
and we set $K=0$ on the complement in $\mathscr{H}_{1}$. 

Then the following inclusion holds:
\begin{equation}
\left\{ h\in\mathscr{H}_{2}\mid K^{*}h\in dom(\Delta^{\frac{1}{2}})\right\} \subseteq\mathscr{D}^{*}.\label{eq:t6}
\end{equation}
We claim that LHS in (\ref{eq:t6}) is dense in $\mathscr{H}_{2}$;
and so (\ref{enu:t1}) is satisfied. To see that (\ref{eq:t6}) holds,
suppose $K^{*}h\in dom(\Delta^{\frac{1}{2}})$; then for all $\varphi\in\mathscr{D}$,
we have
\begin{align*}
\left|\left\langle h,\varphi\right\rangle _{2}\right| & =\left|\langle h,K\Delta^{\frac{1}{2}}\varphi\rangle_{2}\right|\\
 & =\left|\langle K^{*}h,\Delta^{\frac{1}{2}}\varphi\rangle_{1}\right|\quad\left(\text{by }\left(\ref{eq:t5}\right)\right)\\
 & =\left|\langle\Delta^{\frac{1}{2}}K^{*}h,\varphi\rangle_{1}\right|\leq\Vert\Delta^{\frac{1}{2}}K^{*}h\Vert_{1}\Vert\varphi\Vert_{1},
\end{align*}
where we used Schwarz for $\left\langle \cdot,\cdot\right\rangle _{1}$
in the last step. 
\end{proof}
\begin{cor}
\label{cor:cj1}Let $\mathscr{D}\subset\mathscr{H}_{1}\cap\mathscr{H}_{2}$
be as in the statement of \thmref{H12}, and let $J:\mathscr{H}_{1}\longrightarrow\mathscr{H}_{2}$
be the associated closable operator; see (\ref{eq:t4}). Then the
complement 
\[
\mathscr{H}_{2}\ominus\mathscr{D}=\left\{ h\in\mathscr{H}_{2}\mid\left\langle \varphi,h\right\rangle _{2}=0,\;\forall\varphi\in\mathscr{D}\right\} 
\]
satisfies $\mathscr{H}_{2}\ominus\mathscr{D}=ker(J^{*})$. 
\end{cor}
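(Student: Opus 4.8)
The plan is to identify the subspace $\mathscr{H}_{2}\ominus\mathscr{D}$ with the orthogonal complement of $ran(J)$ inside $\mathscr{H}_{2}$, and then to use the standard adjoint identity $\ker(J^{*})=ran(J)^{\perp}$, which holds for any densely defined operator (and $J$ is densely defined by (\ref{eq:t1})). First I would observe that, since $J\varphi=\varphi$ for every $\varphi\in\mathscr{D}=dom(J)$ by (\ref{eq:t4}), the range of $J$, computed inside $\mathscr{H}_{2}$, is exactly the set $\mathscr{D}$; hence $ran(J)^{\perp}=\mathscr{D}^{\perp}$ is precisely the space $\mathscr{H}_{2}\ominus\mathscr{D}$ described in the statement.

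Next I would record the elementary fact that $\ker(J^{*})=ran(J)^{\perp}$, which by the previous observation equals $\mathscr{H}_{2}\ominus\mathscr{D}$. For ``$\subseteq$'': if $h\in\ker(J^{*})$, then $h\in dom(J^{*})$ and, by the defining relation (\ref{eq:4}) of the adjoint, $\left\langle J\varphi,h\right\rangle _{2}=\left\langle \varphi,J^{*}h\right\rangle _{1}=0$ for every $\varphi\in\mathscr{D}$, so $\left\langle \varphi,h\right\rangle _{2}=0$ for all $\varphi\in\mathscr{D}$, i.e.\ $h\in\mathscr{H}_{2}\ominus\mathscr{D}$. For the reverse inclusion: if $\left\langle \varphi,h\right\rangle _{2}=0$ for all $\varphi\in\mathscr{D}$, then $\left|\left\langle \varphi,h\right\rangle _{2}\right|\leq 0\cdot\left\Vert \varphi\right\Vert _{1}$, so the constant $C_{h}=0$ is admissible in (\ref{eq:t2}); hence $h\in\mathscr{D}^{*}=dom(J^{*})$, the last equality being \claimref{Ds}. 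Then $\left\langle \varphi,J^{*}h\right\rangle _{1}=\left\langle J\varphi,h\right\rangle _{2}=\left\langle \varphi,h\right\rangle _{2}=0$ for every $\varphi$ in the subspace $\mathscr{D}$, which is dense in $\mathscr{H}_{1}$ by (\ref{eq:t1}); this forces $J^{*}h=0$, i.e.\ $h\in\ker(J^{*})$. Alternatively, the identity $\ker(J^{*})=ran(J)^{\perp}$ drops straight out of $G_{J^{*}}=\left(VG_{J}\right)^{\perp}$ in \lemref{gT}, by evaluating that relation on pairs in $\mathscr{H}_{2}\oplus\mathscr{H}_{1}$ whose $\mathscr{H}_{1}$-component vanishes.

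Combining the two observations yields $\ker(J^{*})=ran(J)^{\perp}=\mathscr{H}_{2}\ominus\mathscr{D}$, which is the assertion. I do not expect a genuine obstacle here; the two points that merit a word of care are that the bounding constant in (\ref{eq:t2}) may be taken to be $0$ (so that vectors orthogonal to $\mathscr{D}$ really do lie in $dom(J^{*})=\mathscr{D}^{*}$), and that it is exactly the density hypothesis (\ref{eq:t1}) on $\mathscr{D}$ in $\mathscr{H}_{1}$ that upgrades ``$J^{*}h\perp\mathscr{D}$'' to ``$J^{*}h=0$''.
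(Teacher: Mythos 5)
Your proof is correct and is essentially the paper's intended argument: the paper merely says ``Immediate from the theorem,'' and what it has in mind is exactly your chain $\ker(J^{*})=ran(J)^{\perp}=\mathscr{D}^{\perp}=\mathscr{H}_{2}\ominus\mathscr{D}$, using $ran(J)=\mathscr{D}$ from (\ref{eq:t4}) and $\mathscr{D}^{*}=dom(J^{*})$ from \claimref{Ds}. Your two points of care (that $C_{h}=0$ is admissible in (\ref{eq:t2}), and that density of $\mathscr{D}$ in $\mathscr{H}_{1}$ forces $J^{*}h=0$) are precisely the details the paper leaves implicit.
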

\begin{proof}
Immediate from the theorem.
\end{proof}
The following result is motivated by the operator-correspondence for
the case of two Hilbert spaces $\mathscr{H}_{i}$, $i=1,2$, when
the second $\mathscr{H}_{2}$ results as a reflection-positive version
of $\mathscr{H}_{1}$; see \cite{MR1767902} for more details. 
\begin{thm}
Let $\mathscr{D}\subset\mathscr{H}_{1}\cap\mathscr{H}_{2}$ satisfying
the condition(s) in \thmref{H12}, and let $\Delta$ be the associated
selfadjoint operator from (\ref{eq:t3}). Let $U$ be a unitary operator
in $\mathscr{H}_{1}$ which maps $\mathscr{D}$ into $dom\left(\Delta\right)$,
and s.t. 
\begin{equation}
\Delta U\varphi=U^{-1}\Delta\varphi\left(=U^{*}\Delta\varphi\right)\label{eq:rp1}
\end{equation}
holds for all $\varphi\in\mathscr{D}$. 

Then there is a selfadjoint and contractive operator $\widehat{U}$
on $\mathscr{H}_{2}$ such that 
\begin{align}
\langle\widehat{U}\varphi,\psi\rangle_{2} & =\left\langle \Delta U\varphi,\psi\right\rangle _{1}\nonumber \\
 & =\left\langle U\varphi,\Delta\psi\right\rangle _{1},\quad\forall\varphi,\psi\in\mathscr{D}.\label{eq:rp2}
\end{align}
\end{thm}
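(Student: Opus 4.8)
The plan is to define $\widehat U$ directly from the quadratic-form data in \eqref{eq:rp2} and then check that it extends to a bounded, selfadjoint contraction on all of $\mathscr H_2$. First I would rewrite the target relation using the factorization $\Delta=J^*\overline J$ from \thmref{H12} (Claim \ref{claim:Ds}): for $\varphi,\psi\in\mathscr D$ one has $\langle\Delta U\varphi,\psi\rangle_1=\langle \overline J U\varphi,\overline J\psi\rangle_2=\langle JU\varphi,J\psi\rangle_2$, and the hypothesis \eqref{eq:rp1} is exactly the statement that $U$ intertwines $\Delta$ with itself up to the flip $U\leftrightarrow U^{-1}$, which gives the symmetry $\langle\Delta U\varphi,\psi\rangle_1=\langle U\varphi,\Delta\psi\rangle_1=\langle\varphi,\Delta U^{-1}\psi\rangle_1$ after moving $U$ across. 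So the sesquilinear form $B(\varphi,\psi):=\langle\Delta U\varphi,\psi\rangle_1$ on $\mathscr D\times\mathscr D$ is Hermitian-symmetric; this is the content of the two lines of \eqref{eq:rp2}.

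Next I would establish boundedness of $B$ relative to $\|\cdot\|_2$, i.e.\ $|B(\varphi,\psi)|\le\|\varphi\|_2\|\psi\|_2$. The key computation: using \eqref{eq:t3}, $\|\varphi\|_2^2=\langle\varphi,\Delta\varphi\rangle_1=\|\Delta^{1/2}\varphi\|_1^2$, and likewise with the partial isometry $K$ of \eqref{eq:t5} one has $\varphi=K\Delta^{1/2}\varphi$. Then
\[
B(\varphi,\psi)=\langle\Delta U\varphi,\psi\rangle_1=\langle\Delta^{1/2}U\varphi,\Delta^{1/2}\psi\rangle_1,
\]
and since $U$ commutes with $\Delta^{1/2}$ in the twisted sense coming from \eqref{eq:rp1} — more precisely $\Delta^{1/2}U=U\Delta^{1/2}$ on the relevant domain, because $U$ conjugates $\Delta$ to $\Delta$ and hence commutes with every bounded Borel function of $\Delta$, in particular with the spectral projections, giving $\|\Delta^{1/2}U\varphi\|_1=\|\Delta^{1/2}\varphi\|_1=\|\varphi\|_2$ — Cauchy–Schwarz in $\langle\cdot,\cdot\rangle_1$ yields $|B(\varphi,\psi)|\le\|\Delta^{1/2}U\varphi\|_1\|\Delta^{1/2}\psi\|_1=\|\varphi\|_2\|\psi\|_2$. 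Since $\mathscr D$ is dense in $\mathscr H_1$ and $\mathscr D^*$ is dense in $\mathscr H_2$, I would argue that the image of $\mathscr D$ under the natural map into $\mathscr H_2$ is dense enough (equivalently, work on $\overline{ran(\Delta^{1/2})}$ and use $K=0$ on the orthocomplement) so that the bounded Hermitian form $B$ on a dense domain defines, via Riesz, a unique bounded selfadjoint operator $\widehat U$ on $\mathscr H_2$ with $\langle\widehat U\varphi,\psi\rangle_2=B(\varphi,\psi)$; the bound $\le1$ makes it a contraction.

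\emph{Where the difficulty lies.} The genuinely delicate point is the intertwining relation \eqref{eq:rp1}: it is stated only as an algebraic identity on $\mathscr D$, and to commute $U$ past $\Delta^{1/2}$ (or past spectral projections of $\Delta$) one wants $U$ to conjugate the \emph{selfadjoint operator} $\Delta$, not just satisfy $\Delta U=U^{-1}\Delta$ on a core. I expect the argument to need a short lemma: if $U$ is unitary on $\mathscr H_1$, $U\mathscr D\subset dom(\Delta)$, and $\Delta U\varphi=U^{-1}\Delta\varphi$ for $\varphi$ in a core $\mathscr D$ of $\Delta$, then $U^{-1}\Delta U$ (a priori a closed symmetric extension issue) actually equals $\Delta$, after which the twisted relation $\Delta U=U\Delta$ emerges by combining $U^{-1}\Delta U=\Delta$ with $U^2$ acting trivially on the spectral side — or alternatively by applying the identity twice to get $\Delta U^2\varphi=U^{-2}\Delta\varphi$ and organizing an induction. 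Once $U$ is known to commute with $E_\Delta(\cdot)$, everything else is routine: boundedness, selfadjointness, and the contraction bound all follow from the polar-type decomposition $\varphi=K\Delta^{1/2}\varphi$ together with \eqref{eq:t3}. The other minor technical checkpoint is verifying that $\widehat U$ is well-defined independently of representatives, i.e.\ that $B(\varphi,\psi)=0$ whenever $\varphi$ (as an element of $\mathscr H_2$) vanishes, which follows from $\|\cdot\|_2$-boundedness of $B$ established above.
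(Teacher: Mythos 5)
There is a genuine gap, and it sits at the exact point you flagged as delicate. Your boundedness/contractivity argument rests on the claim that $U$ commutes with $\Delta^{1/2}$ (equivalently with the spectral projections of $\Delta$), ``because $U$ conjugates $\Delta$ to $\Delta$.'' But the hypothesis (\ref{eq:rp1}) is \emph{not} a conjugation relation: $\Delta U\varphi=U^{-1}\Delta\varphi$ says $U\Delta U=\Delta$ on $\mathscr{D}$, a reflection-type identity, whereas commutation would require $U^{-1}\Delta U=\Delta$, i.e.\ $\Delta U=U\Delta$. The two coincide only if $U^{2}$ commutes with $\Delta$ (e.g.\ if $U^{2}=I$), which is not assumed. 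Your fallback computation makes the same substitution: iterating (\ref{eq:rp1}) gives $\Delta U^{2}\varphi=U^{-2}\Delta\varphi$ (and even this needs $U\varphi\in\mathscr{D}$, not merely $U\varphi\in dom(\Delta)$), which is again the twisted relation for $U^{2}$ and never produces commutation. Consequently the identity $\Vert\Delta^{1/2}U\varphi\Vert_{1}=\Vert\Delta^{1/2}\varphi\Vert_{1}=\Vert\varphi\Vert_{2}$ is unjustified, and with it the bound $|B(\varphi,\psi)|\le\Vert\varphi\Vert_{2}\Vert\psi\Vert_{2}$. Indeed, $\langle U\varphi,\Delta U\varphi\rangle_{1}=\langle U^{2}\varphi,\Delta\varphi\rangle_{1}$ by (\ref{eq:rp1}), and there is no a priori reason this equals $\langle\varphi,\Delta\varphi\rangle_{1}$.

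The paper's route around exactly this obstruction is the iterated Cauchy--Schwarz (Osterwalder--Schrader) trick, which uses only unitarity of $U$ on $\mathscr{H}_{1}$ and repeated application of (\ref{eq:rp1}): one estimates
\begin{equation*}
\bigl|\langle\widehat{U}\varphi,\varphi\rangle_{2}\bigr|
=\bigl|\langle U\varphi,\Delta\varphi\rangle_{1}\bigr|
\le\langle U^{2}\varphi,\Delta\varphi\rangle_{1}^{1/2}\langle\varphi,\Delta\varphi\rangle_{1}^{1/2}
\le\cdots\le
\langle U^{2^{n}}\varphi,\Delta\varphi\rangle_{1}^{1/2^{n}}\,\langle\varphi,\Delta\varphi\rangle_{1}^{1/2+\cdots+1/2^{n}},
\end{equation*}
where the first factor stays bounded (by $\Vert\varphi\Vert_{1}\Vert\Delta\varphi\Vert_{1}$, since $U$ is unitary on $\mathscr{H}_{1}$) and its $2^{n}$-th root tends to $1$; letting $n\to\infty$ and using (\ref{eq:t3}) gives $|\langle\widehat{U}\varphi,\varphi\rangle_{2}|\le\Vert\varphi\Vert_{2}^{2}$, and selfadjointness of $\widehat{U}$ (your symmetry computation, which is correct) upgrades the numerical-radius bound to the operator norm. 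So the existence of $\widehat{U}\varphi$ via Riesz and the symmetry in (\ref{eq:rp2}) are fine in your write-up, but the contraction bound needs this multiplicative-inequality argument rather than a commutation lemma; the commutation lemma you propose is not provable from the stated hypotheses.
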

\begin{proof}
\textbf{Step 1.} We first determine $\widehat{U}\varphi\in\mathscr{H}_{2}$.
We show that the following estimate holds for the term on the RHS
in (\ref{eq:rp2}): For $\varphi,\psi\in\mathscr{D}$, we have 
\begin{alignat*}{1}
\left|\left\langle \Delta U\varphi,\psi\right\rangle _{1}\right| & =\left|\left\langle U^{*}\Delta\varphi,\psi\right\rangle _{1}\right|\quad\left(\text{by \ensuremath{\left(\ref{eq:rp1}\right)}}\right)\\
 & =\left|\left\langle \Delta\varphi,U\psi\right\rangle _{1}\right|=\left|\left\langle \varphi,\Delta U\psi\right\rangle _{1}\right|=\left|\left\langle \varphi,U\psi\right\rangle _{2}\right|\leq\left\Vert U\psi\right\Vert _{2}\left\Vert \varphi\right\Vert _{2}
\end{alignat*}
since $U\psi\in dom\left(\Delta\right)$ by the assumption. Now fix
$\varphi\in\mathscr{D}$, then by Riesz, there is therefore a $h_{2}\in\mathscr{H}_{2}$
such that $\left\langle \Delta U\varphi,\psi\right\rangle _{1}=\left\langle \varphi,h_{2}\right\rangle _{2}$,
and we set $\widehat{U}\psi=h_{2}$. 

\textbf{Step 2.} Relative to the $\mathscr{H}_{2}$-inner product
$\left\langle \cdot,\cdot\right\rangle _{2}$, we have
\begin{equation}
\langle\widehat{U}\varphi,\psi\rangle_{2}=\langle\varphi,\widehat{U}\psi\rangle_{2},\quad\forall\varphi,\psi\in\mathscr{D}.\label{eq:rp3}
\end{equation}
\emph{Proof of (\ref{eq:rp3})}: 
\begin{align*}
\mbox{LHS}_{\left(\ref{eq:rp3}\right)} & =\left\langle \Delta U\varphi,\psi\right\rangle _{2}\\
 & =\left\langle U^{*}\Delta\varphi,\psi\right\rangle _{1}\quad\left(\text{by \ensuremath{\left(\ref{eq:rp1}\right)}}\right)\\
 & =\left\langle \Delta\varphi,U\psi\right\rangle _{1}=\left\langle \varphi,\Delta U\psi\right\rangle _{1}=\langle\varphi,\widehat{U}\psi\rangle_{2}=\mbox{RHS}_{\left(\ref{eq:rp3}\right)}
\end{align*}
Hence $\widehat{U}^{*}=\widehat{U}$, where $*$ here refers to $\left\langle \cdot,\cdot\right\rangle _{2}$. 

\textbf{Step 3.} $\widehat{U}$ is contractive in $\mathscr{H}_{2}$.
Let $\varphi\in\mathscr{D}$, and estimate the absolute values as
follows:
\begin{alignat*}{2}
\left|\langle\widehat{U}\varphi,\varphi\rangle_{2}\right| & =\left|\left\langle U\varphi,\Delta\varphi\right\rangle _{1}\right|\\
 & \leq\left\langle U\varphi,\Delta U\varphi\right\rangle _{1}^{\frac{1}{2}}\left\langle \varphi,\Delta\varphi\right\rangle _{1}^{\frac{1}{2}} &  & (\mbox{by Schwarz})\\
 & =\left\langle U^{2}\varphi,\Delta\varphi\right\rangle _{1}^{\frac{1}{2}}\left\langle \varphi,\Delta\varphi\right\rangle _{1}^{\frac{1}{2}}\\
 & \leq\left\langle U^{4}\varphi,\Delta\varphi\right\rangle _{1}^{\frac{1}{4}}\left\langle \varphi,\Delta\varphi\right\rangle _{1}^{\frac{1}{2}+\frac{1}{4}} &  & (\mbox{by Schwarz})\\
 & \leq\cdots &  & (\mbox{by induction})\\
 & \leq\langle U^{2^{n}}\varphi,\Delta\varphi\rangle_{1}^{\frac{1}{2^{n}}}\left\langle \varphi,\Delta\varphi\right\rangle _{1}^{\frac{1}{2}+\frac{1}{4}+\cdots+\frac{1}{2^{n}}}.
\end{alignat*}
Taking the limit $n\longrightarrow\infty$, we get $|\langle\widehat{U}\varphi,\varphi\rangle_{2}|\leq\left\Vert \varphi\right\Vert _{2}^{2}$,
since $\left\Vert \varphi\right\Vert _{2}^{2}=\left\langle \varphi,\Delta\varphi\right\rangle _{1}$
by the theorem. Since $\widehat{U}^{*}=\widehat{U}$ by Step 2, we
conclude that 
\begin{equation}
\Vert\widehat{U}\varphi\Vert_{2}\leq\left\Vert \varphi\right\Vert _{2},\quad\forall\varphi\in\mathscr{D}.\label{eq:rp4}
\end{equation}

\textbf{Step 4.} To get contractivity also on $\mathscr{H}_{2}$,
we finally extend $\widehat{U}$, defined initially only on the closure
of $\mathscr{D}$ in $\mathscr{H}_{2}$. By \corref{cj1}, we may
set $\widehat{U}=0$ on $ker(J^{*})$ in $\mathscr{H}_{2}$. 
\end{proof}
\begin{cor}
Let $\mathscr{D}\subset\mathscr{H}_{1}\cap\mathscr{H}_{2}$, and suppose
the condition(s) in \thmref{H12} are satisfied. Set $\Delta_{1}=J^{*}J$,
and $\Delta_{2}=JJ^{*}$, i.e., the two selfadjoint operators associated
to the \uline{closed} operator $J$ from \claimref{Ds}. Let $K$
be the partial isometry in (\ref{eq:t5}); then
\begin{equation}
\left\Vert \varphi\right\Vert _{2}^{2}=\left\langle K\varphi,\Delta_{2}K\varphi\right\rangle _{2},\quad\forall\varphi\in\mathscr{D}.\label{eq:m1}
\end{equation}
\end{cor}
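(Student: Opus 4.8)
The plan is to reduce everything to von Neumann's polar decomposition of the closed operator $J$ and the definition of the partial isometry $K$. First I would recall the setup: since the conditions of \thmref{H12} hold, $J$ (the inclusion $\mathscr{D}\hookrightarrow\mathscr{H}_2$, extended to its closure) is a densely defined closable operator, and by \thmref{vN} applied to $\overline J$ we have a partial isometry $J = J_0(J^*J)^{1/2} = (JJ^*)^{1/2}J_0$ with initial space $\overline{\operatorname{ran}(J^*J)^{1/2}}$ and final space $\overline{\operatorname{ran}(JJ^*)^{1/2}}$; here I am writing $J$ for $\overline J$. The operator $K$ of \eqref{eq:t5} is characterized by $K\Delta_1^{1/2}\varphi=\varphi$ for $\varphi\in\mathscr{D}$ (recall $\Delta=\Delta_1=J^*J$), so on $\operatorname{ran}(\Delta_1^{1/2})$ the operator $K$ inverts $\Delta_1^{1/2}$ and hence $K = J_0^*$ on the appropriate subspace; equivalently, $K$ is the partial isometry from the polar decomposition $J^* = K(JJ^*)^{1/2}$ run the other way, i.e. $\Delta_1^{1/2}K^* = K^*\text{(extension of)}\ldots$. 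The cleanest bookkeeping is: for $\varphi\in\mathscr{D}$, $K\varphi\in\mathscr{H}_2$, and I want to compute $\langle K\varphi,\Delta_2 K\varphi\rangle_2$ with $\Delta_2 = JJ^*$.

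The key computation proceeds as follows. For $\varphi\in\mathscr{D}$, write $\varphi = K\Delta_1^{1/2}\varphi$ by \eqref{eq:t5}; applying the polar-decomposition intertwining relation $JJ^* \cdot K = K\cdot J^*J$ (valid because $K$ is the partial isometry intertwining $J^*J$ on $\mathscr{H}_1$ with $JJ^*$ on $\mathscr{H}_2$, which is exactly the content of $\overline J = (JJ^*)^{1/2}J_0$ together with $K$ being built from $J_0$), one gets $\Delta_2 K\varphi = \Delta_2 K\Delta_1^{1/2}\varphi$. Then
\begin{align*}
\langle K\varphi,\Delta_2 K\varphi\rangle_2
&= \langle \Delta_2^{1/2}K\varphi,\Delta_2^{1/2}K\varphi\rangle_2
= \|\Delta_2^{1/2}K\varphi\|_2^2.
\end{align*}
Now $\Delta_2^{1/2}K = K\Delta_1^{1/2}$ (square-root version of the intertwining), so $\Delta_2^{1/2}K\varphi = K\Delta_1^{1/2}\varphi = \varphi$ by \eqref{eq:t5} again, giving $\|\Delta_2^{1/2}K\varphi\|_2^2 = \|\varphi\|_2^2$. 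That is precisely \eqref{eq:m1}. An alternative route that avoids square roots of $\Delta_2$: note $K^*K$ is the projection onto $\overline{\operatorname{ran}(\Delta_1^{1/2})}$ which contains $\Delta_1^{1/2}\varphi$, and directly compute $\langle K\varphi,\Delta_2 K\varphi\rangle_2 = \langle K^*JJ^*K\varphi,\varphi\rangle_1$; using $J^*K = \Delta_1^{1/2}$ on the relevant subspace (the adjoint form of \eqref{eq:t5}) one collapses this to $\langle \Delta_1^{1/2}\varphi,\Delta_1^{1/2}\varphi\rangle_1 = \langle\varphi,\Delta\varphi\rangle_1 = \|\varphi\|_2^2$ by \eqref{eq:t3}.

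The main obstacle is getting the domain/subspace bookkeeping for $K$ exactly right: $K$ is only a partial isometry, it vanishes on $\ker(J^*)$ in $\mathscr{H}_2$ on the final-space side and its initial space is $\overline{\operatorname{ran}(\Delta_1^{1/2})}\subset\mathscr{H}_1$, so one must check that all vectors appearing ($\Delta_1^{1/2}\varphi$, $K\varphi$, $\Delta_2^{1/2}K\varphi$, etc.) genuinely lie in the subspaces where the stated identities $K\Delta_1^{1/2}\varphi=\varphi$ and $\Delta_2^{1/2}K = K\Delta_1^{1/2}$ are valid, and that $\varphi\in\operatorname{ran}(\Delta_1^{1/2})$ (which follows from $\mathscr{D}\subset dom(\Delta)$ and \eqref{eq:t3} forcing $\varphi$ to be in the closure of the range, combined with $K$ being isometric there). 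Once the intertwining $\Delta_2^{1/2}K = K\Delta_1^{1/2}$ is justified from \thmref{vN}, the rest is a two-line calculation; I would state that intertwining as a short sublemma citing the polar decomposition and then conclude.
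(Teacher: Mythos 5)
Your proposal is correct and takes essentially the same route as the paper: both apply von Neumann's polar decomposition (\thmref{vN}) to the closed operator $J$, so that $J=K\Delta_{1}^{1/2}=\Delta_{2}^{1/2}K$, and then chain $\left\langle K\varphi,\Delta_{2}K\varphi\right\rangle _{2}=\Vert\Delta_{2}^{1/2}K\varphi\Vert_{2}^{2}=\left\Vert J\varphi\right\Vert _{2}^{2}=\left\langle \varphi,\Delta_{1}\varphi\right\rangle _{1}=\left\Vert \varphi\right\Vert _{2}^{2}$. (One small orientation slip: $K$ coincides with the partial isometry $J_{0}$ from $J=J_{0}(J^{*}J)^{1/2}$ itself, not with $J_{0}^{*}$, so $J^{*}=K^{*}(JJ^{*})^{1/2}$; this does not affect the identities $K\Delta_{1}^{1/2}\varphi=\varphi$ and $\Delta_{2}^{1/2}K=K\Delta_{1}^{1/2}$ that your computation actually uses.)
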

\begin{proof}
We shall apply \thmref{vN} to the closed operator $J$. By \thmref{H12}
(\ref{enu:t2}), we have 
\begin{alignat*}{1}
\left\Vert \varphi\right\Vert _{2}^{2}=\left\langle \varphi,\Delta_{1}\varphi\right\rangle _{1} & =\left\Vert J\varphi\right\Vert _{2}^{2}\\
 & =\Vert\Delta_{2}^{\frac{1}{2}}K\varphi\Vert_{2}^{2}\quad(\mbox{by Thm. \ref{thm:vN}})\\
 & =\left\langle K\varphi,\Delta_{2}K\varphi\right\rangle _{2}
\end{alignat*}
which is the desired conclusion (\ref{eq:m1}).
\end{proof}

\section{\label{sec:nd}Noncommutative Lebesgue-Radon-Nikodym decomposition}

The following Examples illustrate that \thmref{H12} may be considered
a non-commutative Radon-Nikodym theorem. (Also see \cite{JorgensenPearse2016}.)
\begin{example}[$\mu_{2}\ll\mu_{1}$]
\label{exa:nd1}Let $\left(X,\mathscr{B}\right)$ be a $\sigma$-compact
measure space. Let $\mu_{i}$, $i=1,2$, be two regular positive measures
defined on $\left(X,\mathscr{B}\right)$. Let $\mathscr{H}_{i}:=L^{2}\left(\mu_{i}\right)$,
$i=1,2$, and set $\mathscr{D}:=C_{c}\left(X\right)$. Then the conditions
in \thmref{H12} hold if and only if $\mu_{2}\ll\mu_{1}$ (relative
absolute continuity). 

In the affirmative case, let $f=d\mu_{2}/d\mu_{1}$ be the corresponding
Radon-Nikodym derivative, and set $\Delta:=$ the operator in $L^{2}\left(\mu_{1}\right)$
of multiplication by $f\left(=d\mu_{2}/d\mu_{1}\right)$, and (\ref{eq:t3})
from the theorem reads as follows: 
\[
\left\langle \varphi,\Delta\varphi\right\rangle _{1}=\int_{X}\left|\varphi\right|^{2}f\,d\mu_{1}=\int_{X}\left|\varphi\right|^{2}d\mu_{2}=\left\Vert \varphi\right\Vert _{2}^{2},\quad\forall\varphi\in C_{c}\left(X\right).
\]
\end{example}
The link between \exaref{nd1} and the setting in \thmref{H12} (the
general case) is as follows. 
\begin{thm}
\label{thm:nd1}Assume the hypotheses of \thmref{H12}. Then, for
every $\varphi\in\mathscr{D}$, there is a Borel measure $\mu_{\varphi}$
on $[0,\infty)$ such that 
\begin{align}
\left\Vert \varphi\right\Vert _{1}^{2} & =\mu_{\varphi}\left([0,\infty)\right)\mbox{, and}\label{eq:rn1}\\
\left\Vert \varphi\right\Vert _{2}^{2} & =\int_{0}^{\infty}\lambda\,d\mu_{\varphi}\left(\lambda\right).\label{eq:rn2}
\end{align}
 
\end{thm}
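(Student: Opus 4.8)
The plan is to invoke the spectral theorem for the selfadjoint operator $\Delta$ supplied by \thmref{H12} (\ref{enu:t2}), and to take $\mu_{\varphi}$ to be the scalar spectral measure of $\Delta$ at the vector $\varphi$. First I would record that $\Delta=J^{*}\overline{J}$ (the operator from \claimref{Ds}) is not merely selfadjoint but \emph{positive semidefinite}: for $\psi\in dom\left(\Delta\right)$ one has $\left\langle \psi,\Delta\psi\right\rangle _{1}=\left\langle \overline{J}\psi,\overline{J}\psi\right\rangle _{2}=\Vert\overline{J}\psi\Vert_{2}^{2}\geq0$. Consequently the projection-valued spectral measure $P\left(\cdot\right)$ of $\Delta$ is supported in $[0,\infty)$, so that $\Delta=\int_{[0,\infty)}\lambda\,dP\left(\lambda\right)$ and $dom\left(\Delta^{1/2}\right)=\left\{ \psi\in\mathscr{H}_{1}\mid\int_{0}^{\infty}\lambda\,d\Vert P\left(\lambda\right)\psi\Vert_{1}^{2}<\infty\right\} $.

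Next, fix $\varphi\in\mathscr{D}$ and define $\mu_{\varphi}\left(S\right):=\Vert P\left(S\right)\varphi\Vert_{1}^{2}=\left\langle \varphi,P\left(S\right)\varphi\right\rangle _{1}$ for Borel sets $S\subseteq[0,\infty)$. The elementary properties of projection-valued measures ($P\left(\emptyset\right)=0$, countable additivity on orthogonal ranges, $P\left([0,\infty)\right)=I$) make $\mu_{\varphi}$ a finite positive Borel measure on $[0,\infty)$, and its total mass is $\mu_{\varphi}\left([0,\infty)\right)=\left\langle \varphi,P\left([0,\infty)\right)\varphi\right\rangle _{1}=\left\langle \varphi,\varphi\right\rangle _{1}=\Vert\varphi\Vert_{1}^{2}$, which is \eqref{eq:rn1}. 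For \eqref{eq:rn2}, since $\mathscr{D}\subset dom\left(\Delta\right)\subseteq dom\left(\Delta^{1/2}\right)$, the identity function $\lambda\mapsto\lambda$ is $\mu_{\varphi}$-integrable and the spectral theorem gives $\int_{0}^{\infty}\lambda\,d\mu_{\varphi}\left(\lambda\right)=\left\langle \varphi,\Delta\varphi\right\rangle _{1}$; by \eqref{eq:t3} in \thmref{H12} (\ref{enu:t2}) the right-hand side equals $\Vert\varphi\Vert_{2}^{2}$, which is \eqref{eq:rn2}.

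I do not anticipate a substantive obstacle: the construction is a direct reading of the spectral resolution of $\Delta$. The only two points deserving a line of care are (a) the positivity of $\Delta$, which is what places $\operatorname{spec}\left(\Delta\right)$, and hence $\mu_{\varphi}$, on $[0,\infty)$ rather than on all of $\mathbb{R}$; and (b) the finiteness of the first moment $\int_{0}^{\infty}\lambda\,d\mu_{\varphi}\left(\lambda\right)$, which is exactly the assertion $\varphi\in dom\left(\Delta^{1/2}\right)$ and is automatic from $\mathscr{D}\subset dom\left(\Delta\right)$. As an alternative one could bypass $\Delta$ and argue directly with the closed operator $J$ of \claimref{Ds}, using its polar decomposition (\thmref{vN}) and the spectral measure of $\left(J^{*}J\right)^{1/2}$; but routing through $\Delta$ and \eqref{eq:t3} is the cleanest path.
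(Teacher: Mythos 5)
Your proposal is correct and follows exactly the paper's argument: take $\mu_{\varphi}\left(S\right)=\left\Vert E_{\Delta}\left(S\right)\varphi\right\Vert _{1}^{2}$ for the spectral measure $E_{\Delta}$ of the positive selfadjoint operator $\Delta=J^{*}\overline{J}$, and read off \eqref{eq:rn1} from the total mass and \eqref{eq:rn2} from the first moment via \eqref{eq:t3}. Your added remarks on positivity of $\Delta$ and finiteness of the first moment simply spell out what the paper leaves to the Spectral Theorem.
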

\begin{proof}
By \thmref{H12}, there is a selfadjoint operator $\Delta=J^{*}J$
satisfying (\ref{eq:t3}). Let 
\[
E_{\Delta}:\mathscr{B}\left([0,\infty)\right)\longrightarrow\mbox{projections in \ensuremath{\mathscr{H}_{1}}}
\]
be the associated projection-valued measure (i.e., $\Delta=\int_{0}^{\infty}\lambda\,E_{\Delta}\left(d\lambda\right)$),
and set 
\begin{equation}
d\mu_{\varphi}\left(\lambda\right)=\left\Vert E_{\Delta}\left(d\lambda\right)\varphi\right\Vert _{1}^{2}.\label{eq:rn4}
\end{equation}
Then it follows from the Spectral Theorem that the conclusions in
(\ref{eq:rn1}) and (\ref{eq:rn2}) hold for $\mu_{\varphi}$ in (\ref{eq:rn4}).
\end{proof}
\begin{example}[$\mu_{2}\perp\mu_{1}$]
\label{exa:nd2}Let $X=\left[0,1\right]$, and consider $L^{2}\left(X,\mu\right)$
for measures $\lambda$ and $\mu$ which are mutually singular. For
concreteness, let $\lambda$ be Lebesgue measure, and let $\mu$ be
the classical singular continuous Cantor measure. Then the support
of $\mu$ is the middle-thirds Cantor set, which we denote by $K$,
so that $\mu\left(K\right)=1$ and $\lambda\left(X\backslash K\right)=1$.
The continuous functions $C\left(X\right)$ are a dense subspace of
both $L^{2}\left(X,\lambda\right)$ and $L^{2}\left(X,\mu\right)$
(see, e.g. \cite[Ch. 2]{MR924157}). Define the ``inclusion'' operator\footnote{As a map between sets, $J$ is the inclusion map $C\left(X\right)\hookrightarrow L^{2}\left(X,\mu\right)$.
However, we are considering $C\left(X\right)\subset L^{2}\left(X,\lambda\right)$
here, and so $J$ is not an inclusion map between Hilbert spaces because
the inner products are different. Perhaps ``pseudoinclusion'' would
be a better term.} $J$ to be the operator with dense domain $C\left(X\right)$ and
\begin{equation}
J:C\left(X\right)\subset L^{2}\left(X,\lambda\right)\longrightarrow L^{2}\left(X,\mu\right)\quad\text{by}\quad J\varphi=\varphi.\label{eq:nd4}
\end{equation}

We will show that $dom\left(J^{*}\right)=\left\{ 0\right\} $, so
suppose $f\in dom\left(J^{*}\right)$. Without loss of generality,
one can assume $f\geq0$ by replacing $f$ with $\left|f\right|$,
if necessary. By definition, $f\in dom\left(J^{*}\right)$ iff there
exists $g\in L^{2}\left(X,\lambda\right)$ for which
\begin{equation}
\left\langle J\varphi,f\right\rangle _{\mu}=\int_{X}\overline{\varphi}f\,d\mu=\int_{X}\overline{\varphi}g\,d\lambda=\left\langle \varphi,g\right\rangle _{\lambda},\quad\forall\varphi\in C\left(X\right).
\end{equation}
One can choose $\left(\varphi_{n}\right)_{n=1}^{\infty}\subset C\left(X\right)$
so that $\varphi_{n}\big|_{K}=1$ and $\lim_{n\rightarrow\infty}\int_{X}\varphi_{n}d\lambda=0$
by considering the appropriate piecewise linear modifications of the
constant function $\mathbbm{1}$. For example, see \figref{nd1}.
Now we have 
\begin{equation}
\left\langle \varphi_{n},J^{*}f\right\rangle _{\lambda}=\left\langle \varphi_{n},f\right\rangle _{\mu}=\left\langle 1,f\right\rangle _{\mu}=\int_{X}\left|f\right|\,d\mu,\quad\forall n\in\mathbb{N},
\end{equation}
but $\lim_{n\rightarrow\infty}\int_{X}\varphi_{n}g\,d\lambda=0$ for
any continuous $g\in L^{2}\left(X,\lambda\right)$. Thus $\int_{X}\left|f\right|\,d\mu=0$,
so that $f=0$ $\mu$-a.e. In other words, $f=0\in L^{2}\left(X,\mu\right)$
and hence $dom\left(J^{*}\right)=\left\{ 0\right\} $, which is certainly
not dense! Thus, one can interpret the adjoint of the inclusion as
multiplication by a Radon-Nikodym derivative (``$J^{*}f=f\frac{d\mu}{d\lambda}$''),
which must be trivial when the measures are mutually singular. This
comment is made more precise in \exaref{nd1} and \thmref{nd1}. As
a consequence of this extreme situation, the inclusion operator in
(\ref{eq:nd4}) is not closable. 
\end{example}
\begin{figure}
\includegraphics[width=0.8\textwidth]{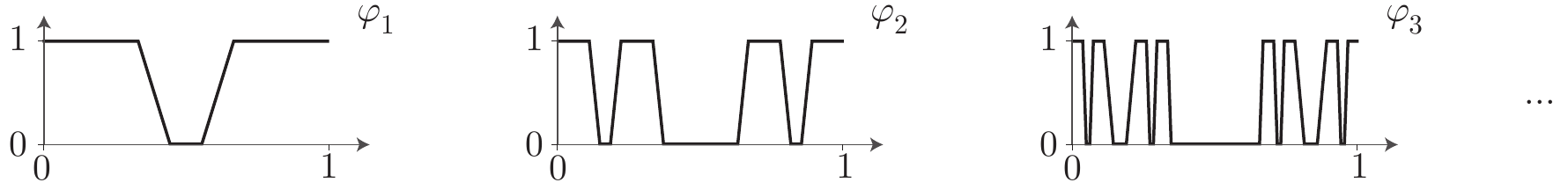}

\caption{\label{fig:nd1}A sequence $\left(\varphi_{n}\right)_{n=1}^{\infty}\subset C\left(X\right)$
for which $\varphi_{n}\big|_{K}=1$ and $\lim_{n\rightarrow\infty}\int_{X}\varphi_{n}d\lambda=0$.
See \exaref{nd2}.}

\end{figure}

\begin{rem}
Using the theory of iterated function systems (IFS), it can be shown
that for \exaref{nd2}, the inclusion in (\ref{eq:in6}) is actually
an equality, i.e., 
\[
\overline{G_{T}}=L^{2}\left(\lambda\right)\oplus L^{2}\left(\mu\right).
\]
Note that $\lambda$ and $\mu$ are both attractors of IFSs, in the
sense of Hutchinson \cite{MR625600}. Indeed, the respective IFSs
on $\left[0,1\right]$ are both given by 
\[
\left\{ S_{1}\left(x\right)=\frac{x}{r+1},\quad S_{2}\left(x\right)=\frac{x+r}{r+1}\right\} ,
\]
where $r=1$ for Lebesgue measure and $r=2$ for the Cantor measure. 
\end{rem}

\section{\label{sec:sp}The general symmetric pairs}

In this section we consider general symmetric pairs $\left(A,B\right)$,
and we show that, for every symmetric pair $\left(A,B\right)$, there
is a canonically associated single Hermitian symmetric operator $L$
in the direct sum-Hilbert space, and we show that $L$ has equal deficiency
indices. The deficiency spaces for $L$ are computed directly from
$\left(A,B\right)$.

Given $\xymatrix{\mathscr{H}_{1}\ar@/^{0.5pc}/[r]^{A} & \mathscr{H}_{2}\ar@/^{0.5pc}/[l]^{B}}
$, both linear, and assume that $dom\left(A\right)$ is dense in $\mathscr{H}_{1}$,
and $dom\left(B\right)$ is dense in $\mathscr{H}_{2}$. Assume further
that
\begin{equation}
\left\langle Au,v\right\rangle _{2}=\left\langle u,Bv\right\rangle _{1},\quad\forall u\in dom\left(A\right),\;v\in dom\left(B\right).
\end{equation}
 
\begin{thm}
\label{thm:gp}On $\mathscr{K}:=\mathscr{H}_{1}\oplus\mathscr{H}_{2}$,
set 
\begin{equation}
L\begin{bmatrix}x\\
y
\end{bmatrix}=\begin{bmatrix}By\\
Ax
\end{bmatrix},\quad\forall x\in dom\left(A\right),\:\forall y\in dom\left(B\right),\label{eq:gp2}
\end{equation}
then $L$ is symmetric (i.e., $L\subset L^{*}$) with equal deficiency
indices, i.e., 
\begin{equation}
\left\langle L\xi,\eta\right\rangle _{\mathscr{K}}=\left\langle \xi,L\eta\right\rangle _{\mathscr{K}},
\end{equation}
for all $\xi,\eta\in dom\left(L\right)=dom\left(A\right)\oplus dom\left(B\right)$. 
\end{thm}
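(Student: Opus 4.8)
The statement has two parts: (1) $L$ is symmetric, and (2) $L$ has equal deficiency indices. For part (1), I would simply compute $\langle L\xi,\eta\rangle_{\mathscr K}$ for $\xi=\begin{bmatrix}x_1\\y_1\end{bmatrix}$, $\eta=\begin{bmatrix}x_2\\y_2\end{bmatrix}$ with $x_i\in dom(A)$, $y_i\in dom(B)$, expand using the block form of $L$ in \eqref{eq:gp2} and the direct-sum inner product, and invoke the pairing hypothesis $\langle Au,v\rangle_2=\langle u,Bv\rangle_1$ twice (once with $(u,v)=(x_1,y_2)$ and once with $(u,v)=(x_2,y_1)$) to match the two sides. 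This also shows $dom(L)=dom(A)\oplus dom(B)\subseteq dom(L^*)$, so $L\subset L^*$.

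**Equal deficiency indices.** This is the substantive part. The clean way is to exhibit a conjugation (antiunitary involution) $C$ on $\mathscr K$ that commutes with $L$; by von Neumann's theorem on symmetric operators commuting with a conjugation, such an $L$ has equal deficiency indices $n_+=n_-$. The natural candidate here is
\begin{equation}
C\begin{bmatrix}x\\y\end{bmatrix}=\begin{bmatrix}x\\-y\end{bmatrix},
\end{equation}
which is a unitary involution on $\mathscr K$, but it is \emph{linear}, not antilinear — yet it still does the job for a different reason: $CLC^{-1}=-L$. An operator $L$ that is unitarily equivalent to $-L$ via $C$ has $\dim\ker(L^*-iI)=\dim\ker(L^*+iI)$, since $C$ intertwines $L^*-iI$ with $-(L^*+iI)$ up to the involution, hence maps one deficiency space bijectively onto the other. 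So the argument is: check $C\,dom(L)=dom(L)$ (clear, since $dom(L)=dom(A)\oplus dom(B)$ is invariant under flipping the sign of the second coordinate), check $LC=-CL$ on $dom(L)$ from \eqref{eq:gp2} (immediate: $LC\begin{bmatrix}x\\y\end{bmatrix}=L\begin{bmatrix}x\\-y\end{bmatrix}=\begin{bmatrix}-By\\Ax\end{bmatrix}=-\begin{bmatrix}By\\-Ax\end{bmatrix}=-CL\begin{bmatrix}x\\y\end{bmatrix}$), pass to adjoints to get $L^*C=-CL^*$, and conclude $C:\ker(L^*-iI)\to\ker(L^*+iI)$ is a bijection, giving $n_+=n_-$.

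**Computing the deficiency spaces.** For the final clause ("the deficiency spaces for $L$ are computed directly from $(A,B)$"), I would identify $L^*$ explicitly. Since $L$ acts by the off-diagonal block matrix $\begin{bmatrix}0&B\\A&0\end{bmatrix}$ on $dom(A)\oplus dom(B)$, a vector $\begin{bmatrix}\xi\\\eta\end{bmatrix}$ lies in $dom(L^*)$ iff $\eta\in dom(A^*)$ and $\xi\in dom(B^*)$, with $L^*\begin{bmatrix}\xi\\\eta\end{bmatrix}=\begin{bmatrix}B^*{}^{\dagger}\cdots\end{bmatrix}$ — more precisely $L^*=\begin{bmatrix}0&A^*\\B^*&0\end{bmatrix}$ wait, I must be careful with which adjoint goes where: testing against $\begin{bmatrix}x\\y\end{bmatrix}\in dom(L)$ gives $\langle L^*\begin{bmatrix}\xi\\\eta\end{bmatrix},\begin{bmatrix}x\\y\end{bmatrix}\rangle=\langle\xi,By\rangle_1+\langle\eta,Ax\rangle_2$, so the first component of $L^*\begin{bmatrix}\xi\\\eta\end{bmatrix}$ is $A^*\eta$ (needs $\eta\in dom(A^*)$) wait that pairs with $x$; rather the first component pairs with $x\in dom(A)$ and equals the Riesz vector for $x\mapsto\langle\eta,Ax\rangle_2$, i.e.\ $A^*\eta$, and the second pairs with $y\in dom(B)$ giving $B^*\xi$. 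Hence $L^*\begin{bmatrix}\xi\\\eta\end{bmatrix}=\begin{bmatrix}A^*\eta\\B^*\xi\end{bmatrix}$ on $dom(B^*)\oplus dom(A^*)$. Then $\ker(L^*\mp iI)=\{\begin{bmatrix}\xi\\\eta\end{bmatrix}:A^*\eta=\pm i\xi,\ B^*\xi=\pm i\eta\}$, which reduces to $B^*A^*\eta=-\eta$ (equivalently $A^*B^*\xi=-\xi$) — so the deficiency spaces are graphs over $\ker(A^*B^*+I)$ and $\ker(B^*A^*+I)$ in $\mathscr H_2$, $\mathscr H_1$ respectively, with the pairing $\xi=\mp i A^*\eta$.

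**Main obstacle.** Part (1) is routine. The only real subtlety is bookkeeping with which operator ($A^*$ vs $B^*$, and the sign/conjugation conventions from the inner-product-linear-in-the-second-variable choice declared in \secref{set}) goes into each block of $L^*$; getting that consistent is where an error would creep in, and it is also what makes the "computed directly from $(A,B)$" clause precise. The conjugation/unitary-flip argument for equal indices is short and robust once part (1) and the description of $dom(L^*)$ are in hand; the denseness of $dom(L)$ in $\mathscr K$ (hence that $L^*$ is well-defined) is immediate from the density of $dom(A)$ in $\mathscr H_1$ and $dom(B)$ in $\mathscr H_2$.
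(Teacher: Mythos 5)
Your proposal is correct, and for the substantive claim (equal deficiency indices) it takes a genuinely different route from the paper. The paper's proof works by explicitly identifying both deficiency spaces: it computes $L^{*}\begin{bmatrix}u\\v\end{bmatrix}=\begin{bmatrix}A^{*}v\\B^{*}u\end{bmatrix}$ on $dom(B^{*})\oplus dom(A^{*})$ and then exhibits the maps $u\mapsto\begin{bmatrix}u\\ \pm iB^{*}u\end{bmatrix}$ as parametrizations of $N_{\mp i}(L^{*})$ by the single space $\ker(A^{*}B^{*}+I)$, so that $d_{+}=d_{-}=\dim\ker(A^{*}B^{*}+I)$ by construction (this is exactly what you recover in your third part, and it is also what the paper needs later, in Lemma \ref{lem:is} and the extension-theoretic corollaries). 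Your argument for $d_{+}=d_{-}$ instead uses the bounded selfadjoint unitary $C=\begin{bmatrix}I&0\\0&-I\end{bmatrix}$, checks $CLC=-L$ on $dom(L)$, passes to adjoints to get $CL^{*}C=-L^{*}$, and concludes that $C$ is a bijection of $\ker(L^{*}-iI)$ onto $\ker(L^{*}+iI)$; this is shorter and more robust (it works for any densely defined symmetric operator anti-commuting with a unitary involution, without needing the explicit form of $L^{*}$), but by itself it does not produce the identification of the deficiency spaces with $\ker(A^{*}B^{*}+I)$ that the paper extracts from its computation. Since you supply that identification separately — and your bookkeeping of which adjoint lands in which block, $L^{*}\begin{bmatrix}\xi\\\eta\end{bmatrix}=\begin{bmatrix}A^{*}\eta\\B^{*}\xi\end{bmatrix}$ with $\xi\in dom(B^{*})$, $\eta\in dom(A^{*})$, agrees with the paper's (\ref{eq:gp5}) — your proof establishes everything the paper's does.
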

\begin{proof}
The non-trivial part concerns the claim that $L$ in (\ref{eq:gp2})
has equal deficiency indices, i.e., the two dimensions 
\begin{equation}
\dim\left\{ \xi_{\pm}\in dom\left(L^{*}\right)\mid L^{*}\xi_{\pm}=\pm i\xi_{\pm}\right\} \label{eq:gp4}
\end{equation}
equal; we say $d_{+}=d_{-}$. 

Let $u\in\mathscr{H}_{1}$, $v\in\mathscr{H}_{2}$; then by \secref{set},
we have 
\[
\begin{bmatrix}u\\
v
\end{bmatrix}\in dom\left(L^{*}\right)\Longleftrightarrow\left[u\in dom\left(B^{*}\right),\;v\in dom\left(A^{*}\right)\right];
\]
and then 
\begin{equation}
L^{*}\begin{bmatrix}u\\
v
\end{bmatrix}=\begin{bmatrix}A^{*}v\\
B^{*}u
\end{bmatrix}.\label{eq:gp5}
\end{equation}
Now consider the following subspace in $\mathscr{K}$, 
\begin{align}
DEF:=\Big\{\begin{bmatrix}u\\
v
\end{bmatrix}\in & \mathscr{K}\mid u\in dom\left(A^{*}B^{*}\right),\:v\in dom\left(B^{*}A^{*}\right),\;\text{and}\nonumber \\
 & A^{*}B^{*}u=-u,\;B^{*}A^{*}v=-v\Big\}.\label{eq:gp6}
\end{align}
We now prove the following claim: The vectors in (\ref{eq:gp4}) both
agree with $\dim\left(\text{DEF}\right)$, see (\ref{eq:gp6}). To
see this, let $\begin{bmatrix}u\\
v
\end{bmatrix}\in\text{DEF},$ and note the following equations must then hold: 
\begin{equation}
L^{*}\begin{bmatrix}u\\
iB^{*}u
\end{bmatrix}\underset{\text{by \ensuremath{\left(\ref{eq:gp5}\right)}}}{=}\begin{bmatrix}A^{*}\left(iB^{*}u\right)\\
B^{*}u
\end{bmatrix}\underset{\text{by \ensuremath{\left(\ref{eq:gp6}\right)}}}{=}\begin{bmatrix}-iu\\
B^{*}u
\end{bmatrix}=-i\begin{bmatrix}u\\
iB^{*}u
\end{bmatrix};\label{eq:gp7}
\end{equation}
and similarly, 
\begin{equation}
L^{*}\begin{bmatrix}u\\
-iB^{*}u
\end{bmatrix}=i\begin{bmatrix}u\\
-iB^{*}u
\end{bmatrix}.\label{eq:gp8}
\end{equation}
The conclusions reverse, and we have proved that $L$ is densely defined
and symmetric with deficiency indices
\[
\left(d_{+},d_{-}\right)=\left(\dim\left(\text{DEF}\right),\dim\left(\text{DEF}\right)\right).
\]
\end{proof}

Since $L$ has equal deficiency indices we know that it has selfadjoint
extensions; see \cite{MR1502991,DS88b}. Moreover, the selfadjoint
extensions of $L$ are determined uniquely by associated partial isometries
$C$ between the respective deficiency spaces. Since we know these
deficiency spaces, see (\ref{eq:gp7}) \& (\ref{eq:gp8}), we get
the following: 
\begin{cor}
Let $A$, $B$, $\mathscr{H}_{1}$, $\mathscr{H}_{2}$, and $L$ be
as above, then TFAE:
\begin{enumerate}[label=\textup{(}\roman{enumi}\textup{)}]
\item $L$ is essentially selfadjoint,
\item $\left\{ h_{1}\in dom\left(A^{*}B^{*}\right)\mid A^{*}B^{*}h_{1}=-h_{1}\right\} =0$, 
\item $\left\{ h_{2}\in dom\left(B^{*}A^{*}\right)\mid B^{*}A^{*}h_{2}=-h_{2}\right\} =0$. 
\end{enumerate}
\end{cor}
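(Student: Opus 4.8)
The plan is to read off the corollary directly from the deficiency-index computation carried out in \thmref{gp}. Recall that a densely defined symmetric operator with equal deficiency indices is essentially selfadjoint if and only if both deficiency indices vanish, i.e., $d_{+}=d_{-}=0$; this is standard von Neumann theory (see \cite{MR1502991,DS88b}). So the task reduces to unpacking what $\dim(\text{DEF})=0$ means in terms of $A$ and $B$, where $\text{DEF}$ is the subspace from (\ref{eq:gp6}).

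First I would invoke \thmref{gp}, which gives $\left(d_{+},d_{-}\right)=\left(\dim(\text{DEF}),\dim(\text{DEF})\right)$; hence (i) holds iff $\dim(\text{DEF})=0$. Next I would show $\dim(\text{DEF})=0$ is equivalent to (ii), and separately equivalent to (iii). For the equivalence with (ii): if $h_{1}\in dom\left(A^{*}B^{*}\right)$ with $A^{*}B^{*}h_{1}=-h_{1}$, then setting $u=h_{1}$ and $v=iB^{*}h_{1}$ (mimicking (\ref{eq:gp7})) one checks $v\in dom\left(B^{*}A^{*}\right)$ and $B^{*}A^{*}v=-v$, so $\begin{bmatrix}u\\v\end{bmatrix}\in\text{DEF}$ and is nonzero whenever $h_{1}\neq0$; conversely, the first coordinate of any element of $\text{DEF}$ is such an $h_{1}$, and the map $\begin{bmatrix}u\\v\end{bmatrix}\mapsto u$ from $\text{DEF}$ into $\{h_{1}:A^{*}B^{*}h_{1}=-h_{1}\}$ is injective because the relations in (\ref{eq:gp6})--(\ref{eq:gp8}) force $v=iB^{*}u$ (up to the choice of the $-i$-eigenvector normalization used in the theorem). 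The equivalence of $\dim(\text{DEF})=0$ with (iii) is symmetric: project onto the second coordinate instead, using $u=iA^{*}v$. Thus (i)$\Leftrightarrow$(ii)$\Leftrightarrow$(iii).

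I expect the only delicate point to be verifying the claimed bijections between $\text{DEF}$ and the two eigenvalue-kernels precisely — specifically, confirming that an element of $\text{DEF}$ is determined by either one of its coordinates, so that these are genuine linear isomorphisms and not merely surjections. This is already implicit in the proof of \thmref{gp} (the equations (\ref{eq:gp7}), (\ref{eq:gp8}) exhibit every deficiency vector as $\begin{bmatrix}u\\ \pm iB^{*}u\end{bmatrix}$ for $u$ ranging over the relevant kernel), so the argument is essentially a matter of bookkeeping rather than a new obstacle; one simply notes that $u\mapsto\begin{bmatrix}u\\iB^{*}u\end{bmatrix}$ inverts the coordinate projection. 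With that in hand, the chain of equivalences is immediate and the corollary follows.
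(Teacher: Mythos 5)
Your proposal is correct and follows essentially the same route the paper intends: the corollary is read off from the deficiency-index computation of Theorem \ref{thm:gp} via von Neumann's criterion ($L$ essentially selfadjoint iff $d_{+}=d_{-}=0$), together with the identifications $N_{\pm i}(L^{*})\cong\ker(A^{*}B^{*}+I)\cong\ker(B^{*}A^{*}+I)$ that the paper makes explicit in Lemma \ref{lem:is}. The only caveat is cosmetic: as literally written in (\ref{eq:gp6}) the conditions on $u$ and $v$ are uncoupled, so the first-coordinate projection on that set is not injective; your argument is really about the deficiency spaces $N_{\pm i}(L^{*})$, whose elements are exactly of the form $\begin{bmatrix}u\\ \pm iB^{*}u\end{bmatrix}$, and with that reading your bijections are exactly the right ones.
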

\begin{example}[Defects $\left(d_{+},d_{-}\right)\neq\left(0,0\right)$]
Let $J$ be a finite open interval, $\mathscr{D}:=C_{c}^{2}\left(J\right)$,
i.e., compact support inside $J$, $\mathscr{H}_{1}=L^{2}\left(J\right)$,
and 
\[
\mathscr{H}_{2}:=\Big\{\text{functions \ensuremath{f} on }J/\left\{ \text{constants}\right\} \;\text{s.t.}\;\left\Vert f\right\Vert _{\mathscr{H}_{2}}^{2}:=\int_{J}\left|f'\left(x\right)\right|^{2}dx<\infty\Big\};
\]
and $\mathscr{H}_{2}$ is the Hilbert space obtained by completion
w.r.t. $\left\Vert \cdot\right\Vert _{\mathscr{H}_{2}}$.

On $\mathscr{D}\ni\varphi$, set $A\varphi:=\varphi$ mod constants;
and $Bf:=-f''=-\frac{d^{2}f}{dx^{2}}$ for $f$ such that $f''\in L^{2}$
and $f'\in L^{2}$ (the derivatives in the sense of distribution.)
Then $\left\langle A\varphi,f\right\rangle _{\mathscr{H}_{2}}=\left\langle \varphi,Bf\right\rangle _{\mathscr{H}_{1}}$
holds. So $\left(A,B,\mathscr{H}_{1},\mathscr{H}_{2}\right)$ is a
symmetric pair, and $L=\begin{bmatrix}0 & B\\
A & 0
\end{bmatrix}$ is Hermitian symmetric with dense domain in $\mathscr{K}=\left[\stackrel[\mathscr{H}_{E}]{l^{2}}{\oplus}\right]$.
One checks that the exponential function $e^{x}$ is in $dom\left(A^{*}B^{*}\right)$,
and that $A^{*}B^{*}e^{x}=-e^{x}$. 

Conclusion, the operator $L$ has deficiency indices $\left(d_{+},d_{-}\right)\neq\left(0,0\right)$.
In fact, $\left(d_{+},d_{-}\right)=\left(2,2\right)$. 
\end{example}
\begin{rem}
If the finite interval $J$ is replaced by $\left(-\infty,\infty\right)$,
then the associated operator $L=\begin{bmatrix}0 & B\\
A & 0
\end{bmatrix}$ will instead have indices $\left(d_{+},d_{-}\right)=\left(0,0\right)$. 
\end{rem}
\begin{defn}
Let $L=\begin{bmatrix}0 & B\\
A & 0
\end{bmatrix}$ be as in (\ref{eq:gp2}) acting in $\mathscr{K}=\mathscr{H}_{1}\oplus\mathscr{H}_{2}$.
The deficiency spaces $N_{i}$ and $N_{-i}$ are as follows:
\begin{align}
N_{i}\left(L^{*}\right) & =\left\{ \xi\in dom\left(L^{*}\right)\mid L^{*}\xi=i\xi\right\} \\
N_{-i}\left(L^{*}\right) & =\left\{ \eta\in dom\left(L^{*}\right)\mid L^{*}\eta=-i\eta\right\} .
\end{align}
We also set 
\begin{equation}
N_{-1}\left(A^{*}B^{*}\right)=\left\{ h\in dom\left(A^{*}B^{*}\right)\mid A^{*}B^{*}h=-h\right\} .
\end{equation}
\end{defn}
\begin{lem}
\label{lem:is}The mapping $\varphi:N_{-1}\left(A^{*}B^{*}\right)\longrightarrow N_{-i}\left(L^{*}\right)$
by 
\begin{equation}
\varphi\left(h\right)=\begin{bmatrix}h\\
iB^{*}h
\end{bmatrix},\quad\forall h\in N_{-1}\left(A^{*}B^{*}\right),\label{eq:is1}
\end{equation}
defines a linear isomorphism. 

Similarly, $\psi:N_{-1}\left(A^{*}B^{*}\right)\longrightarrow N_{i}\left(L^{*}\right)$,
by 
\begin{equation}
\psi\left(h\right)=\begin{bmatrix}h\\
-iB^{*}h
\end{bmatrix},\quad\forall h\in N_{-1}\left(A^{*}B^{*}\right)\label{eq:is2}
\end{equation}
is a linear isomorphism from $N_{-1}\left(A^{*}B^{*}\right)$ onto
$N_{i}\left(L^{*}\right)$. 

Thus the two isomorphisms are both onto: 
\[
\xymatrix{ & N_{-1}\left(A^{*}B^{*}\right)\ar[dl]_{\varphi}\ar[dr]^{\psi}\\
N_{-i}\left(L^{*}\right) &  & N_{i}\left(L^{*}\right)
}
\]
\end{lem}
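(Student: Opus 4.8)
The plan is to check directly that $\varphi$ is a well-defined linear map into $N_{-i}(L^{*})$, that it is injective, and that it is surjective; the map $\psi$ is handled by the mirror argument (replace $-i$ by $+i$ everywhere), so only the changes need be recorded. For well-definedness and linearity: if $h\in N_{-1}(A^{*}B^{*})$ then by definition $h\in dom(A^{*}B^{*})$, which forces $h\in dom(B^{*})$ and $B^{*}h\in dom(A^{*})$; hence $\begin{bmatrix}h\\ iB^{*}h\end{bmatrix}\in dom(L^{*})$ by the description of $dom(L^{*})$ in $(\ref{eq:gp5})$, and the computation already carried out in $(\ref{eq:gp7})$ — which uses only the relation $A^{*}B^{*}h=-h$ — gives $L^{*}\varphi(h)=-i\,\varphi(h)$, so $\varphi(h)\in N_{-i}(L^{*})$. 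Linearity is immediate from linearity of $B^{*}$, and injectivity is trivial: reading off the first coordinate of $\varphi(h)$ shows $\varphi(h)=0\Rightarrow h=0$. For $\psi$ one invokes $(\ref{eq:gp8})$ in place of $(\ref{eq:gp7})$.

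\textbf{Surjectivity, the only substantive step.} Take $\xi=\begin{bmatrix}u\\ v\end{bmatrix}\in N_{-i}(L^{*})$. By $(\ref{eq:gp5})$ this means $u\in dom(B^{*})$, $v\in dom(A^{*})$, and the eigenvalue equation $L^{*}\xi=-i\xi$ is equivalent to the pair $A^{*}v=-iu$ and $B^{*}u=-iv$. The second equation gives $v=iB^{*}u$; substituting into the first gives $iA^{*}B^{*}u=-iu$, i.e. $A^{*}B^{*}u=-u$. The required domain condition comes for free: since $B^{*}u=-iv\in dom(A^{*})$, we get $u\in dom(A^{*}B^{*})$, hence $u\in N_{-1}(A^{*}B^{*})$, and by construction $\xi=\varphi(u)$. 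For $\psi$, the equation $L^{*}\eta=i\eta$ becomes $A^{*}v=iu$, $B^{*}u=iv$, whence $v=-iB^{*}u$ and again $A^{*}B^{*}u=-u$, so $\eta=\psi(u)$ with $u\in N_{-1}(A^{*}B^{*})$.

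\textbf{Expected obstacle.} There is no genuine difficulty; the one point deserving care is the domain bookkeeping in the surjectivity step — namely verifying that the preimage candidate $u$ (the first coordinate of $\xi$) actually lies in $dom(A^{*}B^{*})$ rather than merely in $dom(B^{*})$ — and this is forced by $\xi\in dom(L^{*})$ together with the eigenvalue relation. One may also remark that this lemma simply repackages, as a bijection statement, the two-sided nature of the computations $(\ref{eq:gp7})$–$(\ref{eq:gp8})$ already used in the proof of \thmref{gp}.
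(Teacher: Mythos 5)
Your proposal is correct and follows essentially the same route as the paper's own proof: verify the eigenvalue computation $L^{*}\varphi(h)=-i\,\varphi(h)$ for containment, and for surjectivity unpack $L^{*}\xi=\pm i\xi$ into the pair of equations $A^{*}v=\mp iu$, $B^{*}u=\mp iv$ and eliminate $v$. Your extra attention to the domain bookkeeping (that $B^{*}u=-iv\in dom(A^{*})$ forces $u\in dom(A^{*}B^{*})$) is a worthwhile refinement the paper leaves implicit, but it does not change the argument.
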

\begin{proof}
Let $h\in N_{-1}\left(A^{*}B^{*}\right)$, and compute 
\[
L^{*}\begin{bmatrix}h\\
iB^{*}h
\end{bmatrix}=\begin{bmatrix}0 & A^{*}\\
B^{*} & 0
\end{bmatrix}\begin{bmatrix}h\\
iB^{*}h
\end{bmatrix}=\begin{bmatrix}-ih\\
B^{*}h
\end{bmatrix}=-i\begin{bmatrix}h\\
iB^{*}h
\end{bmatrix}.
\]
So $\varphi\left(N_{-1}\left(A^{*}B^{*}\right)\right)\subset N_{-i}\left(L^{*}\right)$.
But $\varphi$ is also onto, since 
\begin{align*}
\begin{bmatrix}h_{1}\\
h_{2}
\end{bmatrix}\in N_{-i}\left(L^{*}\right) & \Longleftrightarrow\begin{bmatrix}0 & A^{*}\\
B^{*} & 0
\end{bmatrix}\begin{bmatrix}h_{1}\\
h_{2}
\end{bmatrix}=-i\begin{bmatrix}h_{1}\\
h_{2}
\end{bmatrix}
\end{align*}
or equivalently, 
\[
\left\{ \begin{matrix}A^{*}h_{2}=-ih_{1}\\
B^{*}h_{1}=-ih_{2}
\end{matrix}\right\} .
\]
So we get $A^{*}B^{*}h_{1}=-h_{1}$, and $h_{2}=iB^{*}h_{1}$. Thus,
\[
\begin{bmatrix}h_{1}\\
h_{2}
\end{bmatrix}=\begin{bmatrix}h_{1}\\
iB^{*}h_{1}
\end{bmatrix}\in\varphi\left(N_{-1}\left(A^{*}B^{*}\right)\right)
\]
which is the claim in (\ref{eq:is1}). The proof of (\ref{eq:is2})
is similar.
\end{proof}
\begin{rem}
\label{rem:vn}By von Neumann's formulae (see \cite{DS88b}), we have
\begin{equation}
dom\left(L^{*}\right)=dom\left(L\right)+N_{i}\left(L^{*}\right)+N_{-i}\left(L^{*}\right),
\end{equation}
and there is a bijection between selfadjoint extensions $M$, i.e.,
$M\subset L\subset L^{*}$, $M=M^{*}$, and partial isometries $C:N_{i}\left(L^{*}\right)\rightarrow N_{-i}\left(L^{*}\right)$,
such that $M=L_{C}$ has 
\begin{equation}
dom\left(L_{C}\right)=\left\{ \varphi+\psi_{+}+C\psi_{+}\mid\varphi\in dom\left(L\right),\;\psi\in N_{i}\left(L^{*}\right)\right\} .\label{eq:dL}
\end{equation}
\end{rem}
\begin{rem}
Note that if $f=\varphi+\psi_{+}+\psi_{-}\in dom\left(L^{*}\right)$,
with $\varphi\in dom\left(L\right)$, $\psi_{\pm}\in N_{\pm i}\left(L^{*}\right)$,
then 
\begin{equation}
\frac{1}{2i}\left(\left\langle f,L^{*}f\right\rangle -\left\langle L^{*}f,f\right\rangle \right)=\left\Vert \psi_{+}\right\Vert ^{2}-\left\Vert \psi_{-}\right\Vert ^{2},\label{eq:bd}
\end{equation}
where the RHS of (\ref{eq:bd}) can be seen as a generalized boundary
condition. So the extensions $M$ of $L$ correspond to partial isometries
$C:N_{i}\left(L^{*}\right)\rightarrow N_{-i}\left(L^{*}\right)$. 
\end{rem}
\begin{cor}
A partial isometry 
\begin{equation}
C=\begin{bmatrix}C_{11} & C_{12}\\
C_{21} & C_{22}
\end{bmatrix}\label{eq:gp9}
\end{equation}
in $\mathscr{H}_{1}\oplus\mathscr{H}_{2}$ which determines a selfadjoint
extension of $L$ satisfies
\begin{equation}
C_{22}C_{12}^{-1}\left(C_{11}-Q\right)+C_{12}^{-1}\left(C_{11}-Q\right)Q=C_{21}\label{eq:gp10}
\end{equation}
where $Q:\ker\left(A^{*}B^{*}+I_{\mathscr{H}_{1}}\right)\longrightarrow\ker\left(A^{*}B^{*}+I_{\mathscr{H}_{1}}\right)$
is a linear automorphism. (See the diagram in \figref{cc}.)
\end{cor}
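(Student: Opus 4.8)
The corollary asks for a relation satisfied by the block entries of a partial isometry $C\colon N_i(L^*)\to N_{-i}(L^*)$ that induces a selfadjoint extension of $L$. The natural strategy is to transport everything back to the single deficiency space $N_{-1}(A^*B^*)\subset\mathscr{H}_1$ using the two isomorphisms $\varphi,\psi$ from \lemref{is}. First I would write down, for $h\in N_{-1}(A^*B^*)$, the vectors $\psi(h)=\left[\begin{smallmatrix}h\\-iB^*h\end{smallmatrix}\right]\in N_i(L^*)$ and $\varphi(h)=\left[\begin{smallmatrix}h\\iB^*h\end{smallmatrix}\right]\in N_{-i}(L^*)$, and then express the action of $C$ in the $\left[\begin{smallmatrix}\cdot\\ \cdot\end{smallmatrix}\right]$ coordinates on $\mathscr{H}_1\oplus\mathscr{H}_2$ as in \eqref{eq:gp9}. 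The point is that $C$ being a partial isometry between these two concrete deficiency spaces forces $C$ to restrict to a well-defined map on $N_{-1}(A^*B^*)$ via the diagram in \lemref{is}; call the resulting automorphism $Q:=\psi^{-1}C\,\varphi$ (or the appropriate composite), acting on $\ker(A^*B^*+I_{\mathscr{H}_1})$.

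Next I would impose the \emph{compatibility} conditions. Writing $C\varphi(h)=\left[\begin{smallmatrix}C_{11}h+iC_{12}B^*h\\ C_{21}h+iC_{22}B^*h\end{smallmatrix}\right]$ and demanding that this lie in $N_i(L^*)$ — i.e. that it be of the form $\left[\begin{smallmatrix}k\\-iB^*k\end{smallmatrix}\right]$ for $k\in N_{-1}(A^*B^*)$ — yields two equations: the first component defines $k=(C_{11}+iC_{12}B^*)h=:Qh$ in terms of $C_{11},C_{12}$ and $B^*$, and the second component then reads $C_{21}h+iC_{22}B^*h=-iB^*(Qh)$. The idea is to eliminate the explicit occurrences of $B^*$ between these two relations. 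From the first relation, $iC_{12}B^*h = Qh-C_{11}h$, so (at least formally, when $C_{12}$ is invertible on the relevant subspace) $iB^*h = C_{12}^{-1}(Q-C_{11})h$; substituting into the second relation, and using $B^*(Qh)$ in terms of $iB^*$ applied to $Qh\in N_{-1}(A^*B^*)$ again, gives $C_{21}h = -i B^*(Qh) - iC_{22}B^*h = C_{12}^{-1}(Q-C_{11})Q\,h + C_{22}C_{12}^{-1}(Q-C_{11})h$. Rearranging and matching signs against the stated identity produces exactly
\[
C_{22}C_{12}^{-1}\left(C_{11}-Q\right)+C_{12}^{-1}\left(C_{11}-Q\right)Q = C_{21},
\]
with $Q$ the claimed automorphism of $\ker(A^*B^*+I_{\mathscr{H}_1})$.

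Finally I would record why $Q$ is a linear automorphism: $\varphi$ and $\psi$ are linear isomorphisms onto $N_{-i}(L^*)$ and $N_i(L^*)$ respectively by \lemref{is}, and $C$ is a partial isometry whose initial space is $N_i(L^*)$ and whose final space is $N_{-i}(L^*)$ (this is the content of von Neumann's parametrization, \remref{vn}); composing, $Q=\varphi^{-1}\circ C\circ\psi$ (up to the $\pm i$ bookkeeping) is a linear bijection of $N_{-1}(A^*B^*)=\ker(A^*B^*+I_{\mathscr{H}_1})$ onto itself.

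\textbf{Main obstacle.} The delicate point is the invertibility of $C_{12}$ needed to write $iB^*h=C_{12}^{-1}(Q-C_{11})h$: $C_{12}$ is only a block of a \emph{partial} isometry, so it need not be invertible on all of $\mathscr{H}_2$, and the manipulation must be read as taking place on the appropriate finite-dimensional (or at least closed) deficiency subspaces, with $C_{12}^{-1}$ interpreted as the inverse of $C_{12}$ restricted to $\ker(A^*B^*+I)$-related spaces. Making this bookkeeping precise — tracking exactly which subspaces $C_{11},C_{12},C_{21},C_{22}$ act between, and verifying that $(Q-C_{11})$ lands in the range of $C_{12}$ so that $C_{12}^{-1}(Q-C_{11})$ makes sense — is the part that requires care; the algebra of elimination itself is routine once those domains are pinned down. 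I would handle this by first treating the case where $L$ has finite, equal deficiency indices (so all blocks are finite matrices between finite-dimensional spaces and $C$ is an honest unitary on $N_i(L^*)$), and then remarking that the general partial-isometry case follows by restricting to the initial space of $C$.
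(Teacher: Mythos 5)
Your proposal is correct and follows essentially the same route as the paper: identify $N_{\pm i}(L^{*})$ with $\ker(A^{*}B^{*}+I)$ via Lemma \ref{lem:is}, write out the block action of $C$ on vectors of the form $\bigl[\begin{smallmatrix}u\\ \mp iB^{*}u\end{smallmatrix}\bigr]$, and eliminate $iB^{*}$ through $C_{12}^{-1}$ to obtain (\ref{eq:gp10}) — your reversal of the domain/codomain convention for $C$ only permutes the $\pm i$ bookkeeping and lands on the same identity. Your explicit flagging of the invertibility of $C_{12}$ on the relevant deficiency subspaces is a point the paper's own proof passes over silently, so that caveat is a welcome addition rather than a gap.
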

\begin{proof}
By \lemref{is}, the two deficiency spaces of $L$ are
\[
N_{\pm i}\left(L^{*}\right):=\left\{ \begin{bmatrix}u\\
\mp iB^{*}u
\end{bmatrix}\mid u\in\ker\left(A^{*}B^{*}+1\right)\right\} .
\]
Indeed, one checks that 
\[
L^{*}\begin{bmatrix}u\\
-iB^{*}u
\end{bmatrix}=\begin{bmatrix}0 & A^{*}\\
B^{*} & 0
\end{bmatrix}\begin{bmatrix}u\\
-iB^{*}u
\end{bmatrix}=i\begin{bmatrix}u\\
-iB^{*}u
\end{bmatrix},
\]
and so $\begin{bmatrix}u\\
-iB^{*}u
\end{bmatrix}\in N_{i}\left(L^{*}\right)$, with $u$ satisfying $A^{*}B^{*}u=-u$. The verification for $N_{-i}\left(L^{*}\right)$
is similar. 

By the general theory of von Neumann (see \cite{DS88b} and \remref{vn}),
the selfadjoint extensions $L_{C}\supset L$ are determined by partial
isometries $C:N_{i}\left(L^{*}\right)\rightarrow N_{-i}\left(L^{*}\right)$,
equivalently, $C$ induces a linear operator $Q:\ker\left(A^{*}B^{*}+1\right)\rightarrow\ker\left(A^{*}B^{*}+1\right)$.

Use (\ref{eq:gp6}), (\ref{eq:gp7}) and (\ref{eq:gp8}) we see that
every partial isometry $C=\left(C_{ij}\right)_{ij=1}^{2}$ as in (\ref{eq:gp9})
must satisfy 
\begin{align*}
\begin{bmatrix}C_{11} & C_{12}\\
C_{21} & C_{22}
\end{bmatrix}\begin{bmatrix}u\\
-iB^{*}u
\end{bmatrix} & =\begin{bmatrix}Qu\\
iB^{*}Qu
\end{bmatrix}\\
 & \Updownarrow\\
C_{11}u-C_{12} & iB^{*}u=Qu\\
C_{21}u-C_{22} & iB^{*}u=iB^{*}Qu
\end{align*}
It follows that $C_{12}iB^{*}=C_{11}-Q$, and $C_{22}iB^{*}+iB^{*}Q=C_{21}$.
Hence 
\[
C_{22}C_{12}^{-1}\left(C_{11}-Q\right)+C_{12}^{-1}\left(C_{11}-Q\right)Q=C_{21},
\]
which is the assertion in (\ref{eq:gp10}). 
\end{proof}
\begin{figure}[H]
\[
\xymatrix{N_{-1}\left(A^{*}B^{*}\right)\ar[rr]^{Q=\widetilde{C}}\ar[d]_{\psi} &  & N_{-1}\left(A^{*}B^{*}\right)\\
N_{i}\left(L^{*}\right)\ar[rr]_{C} &  & N_{-i}\left(L^{*}\right)\ar[u]_{\varphi^{-1}}\\
u\ar[rr]_{\widetilde{C}}\ar[d]_{\psi}\ar[rr]_{\widetilde{C}}\ar@/^{2.5pc}/[uu] &  & \widetilde{C}u\ar[d]^{\varphi}\ar@/_{2.5pc}/[uu]\\
\text{\ensuremath{\begin{bmatrix}u\\
-iB^{*}u
\end{bmatrix}}}\ar[rr]_{C}\ar@/^{2.5pc}/[uu] &  & \text{\ensuremath{\begin{bmatrix}\widetilde{C}u\\
iB^{*}\widetilde{C}u
\end{bmatrix}}}\ar@/_{2.5pc}/[uu]
}
\]

\caption{\label{fig:cc}The linear operator $\widetilde{C}$ in $N_{-1}\left(A^{*}B^{*}\right)$
induced by $C:N_{i}\left(L^{*}\right)\rightarrow N_{-i}\left(L^{*}\right)$. }

\end{figure}

\begin{rem}
Let $C:N_{i}\left(L^{*}\right)\rightarrow N_{-i}\left(L^{*}\right)$
be a partial isometry w.r.t. the $\mathscr{K}$ norm, i.e., $\left\Vert \cdot\right\Vert _{\mathscr{K}}^{2}=\left\Vert \cdot\right\Vert _{1}^{2}+\left\Vert \cdot\right\Vert _{2}^{2}$.
We conclude that 
\begin{equation}
\left\Vert u\right\Vert _{1}^{2}+\left\Vert B^{*}u\right\Vert _{2}^{2}=\left\Vert Qu\right\Vert _{1}^{2}+\left\Vert B^{*}Qu\right\Vert _{1}^{2},\quad\forall u\in N_{-1}\left(A^{*}B^{*}\right),\label{eq:nL1}
\end{equation}
where $Q:=\widetilde{C}$. 

It may occur that $A$ and $B$ are not closed; if not, refer to the
corresponding closures. Recall that $\overline{A}^{*}=A^{*}$, $\overline{B}^{*}=B^{*}$.
Then (\ref{eq:nL1}) takes the equivalent form
\begin{equation}
I_{1}+BB^{*}=Q^{*}Q+Q^{*}BB^{*}Q\label{eq:nL2}
\end{equation}
as an operator identity in $N_{-1}\left(A^{*}B^{*}\right)$. Equivalently
(the norm preserving property)
\begin{equation}
I_{1}+BB^{*}=Q^{*}\left(I+BB^{*}\right)Q,\label{eq:nL3}
\end{equation}
and so this is the property of $Q$ which is equivalent to the partial
isometric property of $C$. 
\end{rem}
\begin{cor}
Fix $L=\begin{bmatrix}0 & B\\
A & 0
\end{bmatrix}$, then the selfadjoint extensions $L_{Q}$ of $L$ are determined
by all operator solutions $Q$ to (\ref{eq:nL3}). 
\end{cor}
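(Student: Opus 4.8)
The plan is to show the two directions of the biconditional via the correspondence already established in the preceding Remark, namely the operator identity \eqref{eq:nL3}. First I would recall from \lemref{is} and \remref{vn} that self-adjoint extensions $L_Q$ of $L$ are in bijective correspondence with partial isometries $C : N_i(L^*) \to N_{-i}(L^*)$ with respect to the graph-norm $\left\Vert \cdot \right\Vert_{\mathscr{K}}$, and that under the isomorphisms $\varphi, \psi$ of \lemref{is} such a $C$ is equivalent to a linear operator $Q$ on $N_{-1}(A^*B^*)$. The point is then to verify that the partial-isometry condition on $C$ translates \emph{exactly} into \eqref{eq:nL3}, i.e. into $I_1 + BB^* = Q^*(I + BB^*)Q$ as an operator identity on $N_{-1}(A^*B^*)$.

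For the forward implication, given a self-adjoint extension $M \supset L$, von Neumann's theorem produces the unitary $C : N_i(L^*) \to N_{-i}(L^*)$; composing with $\psi$ and $\varphi^{-1}$ (both linear isomorphisms by \lemref{is}) yields $Q = \widetilde{C} := \varphi^{-1} C \psi$ on $N_{-1}(A^*B^*)$. Since for $u \in N_{-1}(A^*B^*)$ we have $\psi(u) = \begin{bmatrix} u \\ -iB^*u \end{bmatrix}$ with $\left\Vert \psi(u) \right\Vert_{\mathscr{K}}^2 = \left\Vert u \right\Vert_1^2 + \left\Vert B^*u \right\Vert_2^2$, and similarly $\left\Vert \varphi(Qu) \right\Vert_{\mathscr{K}}^2 = \left\Vert Qu \right\Vert_1^2 + \left\Vert B^*Qu \right\Vert_2^2$, the norm-preserving property of $C$ gives \eqref{eq:nL1}, which after polarization (or directly, reading the quadratic form) is \eqref{eq:nL3}. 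Conversely, given any operator $Q$ on $N_{-1}(A^*B^*)$ satisfying \eqref{eq:nL3}, reverse the construction: define $C$ on $N_i(L^*)$ by $C\,\psi(u) := \varphi(Qu)$; \eqref{eq:nL3} says precisely that $C$ is isometric (hence a partial isometry after extending by $0$ on the orthogonal complement, as in \remref{vn}), so von Neumann's parametrization yields a self-adjoint extension $L_Q$, and this assignment is the inverse of the previous one. Thus the self-adjoint extensions are exactly the $L_Q$ for $Q$ solving \eqref{eq:nL3}.

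The main obstacle I anticipate is bookkeeping rather than conceptual depth: one must be careful that $N_{-1}(A^*B^*)$ is taken closed (the Remark already flags ``refer to the closures'' of $A,B$ when needed, and uses $\overline{A}^* = A^*$, $\overline{B}^* = B^*$ from \thmref{in1}\eqref{enu:b3}) so that $B^*$ is a closed operator on that subspace and $I + BB^*$ makes sense as a self-adjoint operator there; and one must check that \eqref{eq:nL3} is genuinely equivalent to $C$ being norm-preserving on all of $N_i(L^*)$, not merely on a form core — this is where the closedness of the deficiency spaces and the boundedness issues implicit in comparing $\left\Vert \cdot \right\Vert_{\mathscr{K}}$ with $\left\Vert \cdot \right\Vert_1 + \left\Vert B^* \cdot \right\Vert_2$ must be handled cleanly. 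Once that equivalence is in place, the corollary is immediate from \remref{vn}. I would therefore write the proof as: ``By \remref{vn} and \lemref{is}, self-adjoint extensions correspond to partial isometries $C$, equivalently to operators $Q = \widetilde{C}$; by \eqref{eq:nL1}–\eqref{eq:nL3} the partial-isometry condition on $C$ is equivalent to \eqref{eq:nL3} for $Q$; hence the claim.''
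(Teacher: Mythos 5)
Your proposal is correct and follows essentially the same route as the paper: von Neumann's parametrization of selfadjoint extensions by (partial) isometries $C:N_{i}(L^{*})\rightarrow N_{-i}(L^{*})$, transported through the isomorphisms $\varphi,\psi$ of Lemma \ref{lem:is} to an operator $Q=\varphi^{-1}C\psi$ on $N_{-1}(A^{*}B^{*})$, with the norm-preservation of $C$ translating into (\ref{eq:nL1}) and hence (\ref{eq:nL3}). The paper's own proof is even terser (it just records $dom(L_{C})$ and the action of $L_{C}$ and points back to (\ref{eq:LQ1})--(\ref{eq:LQ2}) and the preceding Remark), so your writeup, which makes the two directions of the correspondence explicit, is if anything more complete.
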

Moreover, 
\begin{equation}
dom\left(L_{Q}\right)=\left\{ \begin{bmatrix}x\\
y
\end{bmatrix}+\begin{bmatrix}u\\
-iB^{*}u
\end{bmatrix}+\begin{bmatrix}v\\
-iB^{*}v
\end{bmatrix}\right\} \label{eq:LQ1}
\end{equation}
where $\begin{bmatrix}x\\
y
\end{bmatrix}\in dom\left(L\right)$, $u,v\in N_{-1}\left(A^{*}B^{*}\right)$, and $v=Qu$; and 
\begin{equation}
L_{Q}\begin{bmatrix}x+u+v\\
y-iB^{*}u+iB^{*}v
\end{bmatrix}=\begin{bmatrix}By+iu-iv\\
Ax+B^{*}u-B^{*}v
\end{bmatrix}.\label{eq:LQ2}
\end{equation}

\begin{proof}
On the domain
\begin{equation}
dom\left(L_{C}\right)=\left\{ \varphi+\psi_{+}+C\psi_{+}\mid\varphi\in dom\left(L\right),\;\psi_{+}\in N_{i}\left(L^{*}\right)\right\} ,
\end{equation}
we have 
\begin{equation}
L_{C}\left(\varphi+\psi_{+}+C\psi_{+}\right)=L\varphi+i\psi_{+}-iC\psi_{+}.
\end{equation}
Now apply this (\ref{eq:LQ1})-(\ref{eq:LQ2}). Also see \cite{DS88b},
and \remref{vn}.
\end{proof}

\section{Selfadjoint extensions of semibounded operators}

Many \textquotedblleft naïve\textquotedblright{} treatments of linear
operators in the physics literature are based on analogies to finite
dimensions. They often result in paradoxes and inaccuracies as they
miss some key issues intrinsic to unbounded operators, questions dealing
with domains, closability, graphs, and in the symmetric case, the
distinction between formally Hermitian and selfadjoint, deficiency
indices, issues all inherent in infinite-dimensional analysis of unbounded
operators and their extensions. Only when these questions are resolved
for the particular application at hand, will we arrive at a rigorous
spectral analysis, and get reliable predictions of scattering (from
von Neumann's Spectral Theorem); see e.g., \cite{MR3167762,MR507913}.
Since measurements of the underlying observables, in prepared states,
come from the projection valued measures, which are dictated by choices
(i)-(ii) (see \secref{Intro}), these choices have direct physical
significance. 

Let $\mathscr{H}$ be a complex Hilbert space. Let $A$ be an operator
in $\mathscr{H}$ with $dom\left(A\right)=\mathscr{D}$, dense in
$\mathscr{H}$, such that 
\begin{equation}
\left\Vert \varphi\right\Vert _{A}^{2}:=\left\langle \varphi,A\varphi\right\rangle \geq\left\Vert \varphi\right\Vert ^{2},\quad\forall\varphi\in\mathscr{D}.\label{eq:s1}
\end{equation}
The completion of $\mathscr{D}$ with respect to the $\left\Vert \cdot\right\Vert _{A}$-norm
yields a Hilbert space $\mathscr{H}_{A}$. Let 
\[
J:\mathscr{H}_{A}\longrightarrow\mathscr{H},\quad J\varphi=\varphi,
\]
be the inclusion map. It follows from (\ref{eq:s1}) that
\begin{equation}
\left\Vert J\varphi\right\Vert =\left\Vert \varphi\right\Vert \leq\left\Vert \varphi\right\Vert _{A},\label{eq:s2}
\end{equation}
thus $J$ is contractive, and so are $J^{*}J$ and $JJ^{*}$. 
\begin{rem}
The inner product in $\mathscr{H}_{A}$ is denoted by $\left\langle \cdot,\cdot\right\rangle _{A}$
with subscript $A$, as opposed to $\left\langle \cdot,\cdot\right\rangle $
for the original Hilbert space $\mathscr{H}$. That is, 
\begin{equation}
\left\langle f,g\right\rangle _{A}:=\left\langle f,Ag\right\rangle ,\quad\forall f,g\in\mathscr{D}.\label{eq:s3}
\end{equation}
\end{rem}
Recall the adjoint operator $J^{*}:\mathscr{H}\longrightarrow\mathscr{H}_{A}$,
by 
\begin{equation}
\left\langle h,Jg\right\rangle =\left\langle J^{*}h,g\right\rangle _{A},\quad\forall h\in\mathscr{H},g\in\mathscr{H}_{A}.\label{eq:s4}
\end{equation}

\begin{thm}
\label{thm:Fe}The operator $\left(JJ^{*}\right)^{-1}$ is unbounded,
and is a selfadjoint extension of $A$, i.e., 
\begin{equation}
\left(JJ^{*}\right)^{-1}\supseteq A.\label{eq:s5}
\end{equation}
Moreover, it coincides with the Friedrichs extension \cite{DS88b}.
(See the diagram below.) 
\[
\xymatrix{\mathscr{H}_{A}\ar@/^{1pc}/[rr]^{J}\ar@(ul,dl)[]_{J^{*}J} &  & \mathscr{H}\ar@/^{1pc}/[ll]^{J^{*}}\ar@(dr,ur)[]_{JJ^{*}}}
\]
\end{thm}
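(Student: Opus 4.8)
\emph{Overview.} The plan is to read everything off from the fact, already established, that $J$ is a bounded operator, by feeding it into the spectral-theorem / von Neumann machinery: first I would show that $(JJ^{*})^{-1}$ is a bona fide positive selfadjoint operator, then that it extends $A$, then that it is the Friedrichs extension. Along the way I also need the (standard but non-formal) fact that $J$ is injective, i.e. that $\mathscr{H}_{A}$ really embeds into $\mathscr{H}$.

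\emph{Stage 1: the operator $(JJ^{*})^{-1}$.} I would first note that $ran(J)\supseteq\mathscr{D}$ is dense in $\mathscr{H}$, so $\ker(J^{*})=ran(J)^{\perp}=\{0\}$; and since $\langle JJ^{*}h,h\rangle=\Vert J^{*}h\Vert_{A}^{2}$ for all $h\in\mathscr{H}$, this also gives $\ker(JJ^{*})=\ker(J^{*})=\{0\}$. Thus $JJ^{*}$ is a bounded, positive, injective selfadjoint operator on $\mathscr{H}$ with $\overline{ran(JJ^{*})}=\ker(JJ^{*})^{\perp}=\mathscr{H}$, and the spectral theorem then yields that $(JJ^{*})^{-1}$, defined on the dense domain $ran(JJ^{*})$, is a positive selfadjoint operator; it is unbounded precisely when $ran(JJ^{*})$ fails to be closed, i.e. when $0\in\sigma(JJ^{*})$ — which is forced once $A$ is unbounded, since a bounded $(JJ^{*})^{-1}$ would make $A\subseteq(JJ^{*})^{-1}$ bounded. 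In the same step I would record that $J$ itself is injective (equivalently, that the form $\varphi\mapsto\langle\varphi,A\varphi\rangle$ is closable): if $(\varphi_{n})\subset\mathscr{D}$ is $\Vert\cdot\Vert_{A}$-Cauchy with $\varphi_{n}\to0$ in $\mathscr{H}$, then from $\Vert\varphi_{n}\Vert_{A}^{2}=\langle\varphi_{n},A\varphi_{m}\rangle+\langle\varphi_{n},\varphi_{n}-\varphi_{m}\rangle_{A}$ one lets $n\to\infty$ with $m$ fixed (the first term $\to0$ because $A\varphi_{m}$ is a fixed vector of $\mathscr{H}$) and then $m\to\infty$, forcing $\Vert\varphi_{n}\Vert_{A}\to0$. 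This permits identifying $\mathscr{H}_{A}$ with the subspace $J(\mathscr{H}_{A})\subset\mathscr{H}$ (the ``form domain''), needed in Stage 3.

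\emph{Stage 2: $(JJ^{*})^{-1}\supseteq A$.} Fix $\varphi\in\mathscr{D}=dom(A)$. For every $g\in\mathscr{D}$, combining the defining relation (\ref{eq:s4}) of $J^{*}$, the formula (\ref{eq:s3}) for $\langle\cdot,\cdot\rangle_{A}$, and the symmetry of $A$ (automatic from (\ref{eq:s1}) by polarization), one gets $\langle J^{*}(A\varphi),g\rangle_{A}=\langle A\varphi,g\rangle=\langle\varphi,Ag\rangle=\langle\varphi,g\rangle_{A}$. Since $\mathscr{D}$ is dense in $\mathscr{H}_{A}$, this says $J^{*}(A\varphi)=\varphi$ in $\mathscr{H}_{A}$, hence $JJ^{*}(A\varphi)=J\varphi=\varphi$ in $\mathscr{H}$. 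Therefore $\varphi=JJ^{*}(A\varphi)\in ran(JJ^{*})=dom((JJ^{*})^{-1})$ and $(JJ^{*})^{-1}\varphi=A\varphi$, which is precisely (\ref{eq:s5}).

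\emph{Stage 3: identification with Friedrichs, and the main obstacle.} Finally I would invoke the characterization of the Friedrichs extension $A_{F}$ as the unique selfadjoint extension of $A$ whose domain is contained in the form-completion $\mathscr{H}_{A}$ (realized inside $\mathscr{H}$ by Stage 1); concretely, $u\in dom(A_{F})$ with $A_{F}u=w$ iff $u\in\mathscr{H}_{A}$ and $\langle u,v\rangle_{A}=\langle w,v\rangle$ for all $v\in\mathscr{H}_{A}$. Now $dom((JJ^{*})^{-1})=ran(JJ^{*})=J(ran(J^{*}))\subseteq J(\mathscr{H}_{A})$, so the domain of the selfadjoint extension $(JJ^{*})^{-1}\supseteq A$ does lie in $\mathscr{H}_{A}$, and by uniqueness $(JJ^{*})^{-1}=A_{F}$; one may also verify it directly, since for $u=JJ^{*}h$ (identified with its $\mathscr{H}_{A}$-preimage $J^{*}h$) and $v\in\mathscr{H}_{A}$ one has $\langle u,v\rangle_{A}=\langle J^{*}h,v\rangle_{A}=\langle h,Jv\rangle=\langle(JJ^{*})^{-1}u,v\rangle$, which is exactly the defining weak relation for $A_{F}$. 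The step I expect to be the real obstacle is the injectivity of $J$ in Stage 1 — i.e. that $\mathscr{H}_{A}$ genuinely embeds into $\mathscr{H}$ — as it is the only point needing a true limiting argument (and the only leverage is that $A\varphi_{m}$ is a fixed vector of $\mathscr{H}$ to pair the $\mathscr{H}_{A}$-Cauchy, $\mathscr{H}$-null sequence against); everything afterward is formal manipulation of $J^{*}$, the identity (\ref{eq:s3}), and density of $\mathscr{D}$ in $\mathscr{H}_{A}$, with the spectral theorem (cf. \thmref{vN}) and the textbook description of the Friedrichs extension as the only imported ingredients.
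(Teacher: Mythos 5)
Your proof is correct, and its core (Stage 2) is the same computation as the paper's: the chain of adjoint identities from (\ref{eq:s3})--(\ref{eq:s4}) yielding $JJ^{*}A\varphi=\varphi$ on $\mathscr{D}$, just carried out in $\mathscr{H}_{A}$ against the dense set $\mathscr{D}$ rather than weakly in $\mathscr{H}$. The paper merely asserts the remaining claims (selfadjointness of $\left(JJ^{*}\right)^{-1}$ via von Neumann, unboundedness, and the Friedrichs identification), whereas your Stages 1 and 3 actually prove them --- in particular the injectivity of the extended $J$ (closability of the form $\varphi\mapsto\left\langle \varphi,A\varphi\right\rangle $), without which the identification of $ran\left(JJ^{*}\right)$ with a subspace of the form domain, and hence the appeal to the uniqueness characterization of the Friedrichs extension, would not go through.
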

\begin{proof}
(\ref{eq:s5})$\Longleftrightarrow$
\begin{alignat}{2}
\left(JJ^{*}\right)^{-1} & \varphi=A\varphi, &  & \forall\varphi\in\mathscr{D},\nonumber \\
 & \Updownarrow\nonumber \\
\varphi= & JJ^{*}A\varphi, &  & \forall\varphi\in\mathscr{D},\nonumber \\
 & \Updownarrow\nonumber \\
\left\langle \psi,\varphi\right\rangle = & \left\langle \psi,JJ^{*}A\varphi\right\rangle , & \quad & \forall\psi,\varphi\in\mathscr{D}.\label{eq:s6}
\end{alignat}
For a pair $\psi,\varphi\in\mathscr{D}$ as in (\ref{eq:s6}), we
have
\begin{alignat*}{2}
\mbox{RHS}_{\left(\ref{eq:s6}\right)} & =\left\langle J^{*}\psi,J^{*}A\varphi\right\rangle _{A} &  & \mbox{by }\left(\ref{eq:s4}\right)\\
 & =\left\langle JJ^{*}\psi,A\varphi\right\rangle  &  & \mbox{by }\left(\ref{eq:s4}\right)\\
 & =\left\langle J^{*}\psi,\varphi\right\rangle _{A} &  & \mbox{by }\left(\ref{eq:s3}\right),\mbox{ and \ensuremath{J^{**}=J} from general theory}\\
 & =\left\langle \psi,J\varphi\right\rangle \\
 & =\left\langle \psi,\varphi\right\rangle =\mbox{LHS}_{\left(\ref{eq:s6}\right)}
\end{alignat*}

That $\left(JJ^{*}\right)^{-1}$ is selfadjoint follows from a general
theorem of von Neumann (\thmref{vN}). See, e.g., \cite{DS88b}. $\left(JJ^{*}\right)^{-1}$
is the \emph{Friedrichs extension} of $A$. 
\end{proof}
Let $q$ be a sesquilinear form on $\mathscr{Q}\subset\mathscr{H}$
(linear in the second variable) such that: 
\begin{enumerate}[label=(\roman{enumi}),ref=\roman{enumi}]
\item \label{enu:q1}$\mathscr{Q}$ is a dense subspace in $\mathscr{H}$.
\item \label{enu:q2}$q\left(\varphi,\varphi\right)\geq\left\Vert \varphi\right\Vert ^{2}$,
for all $\varphi\in\mathscr{Q}$. 
\item \label{enu:q3}$q$ is \emph{closed}, i.e., $\mathscr{Q}$ is a Hilbert
space w.r.t. 
\begin{align*}
\left\langle \varphi,\psi\right\rangle _{q} & :=q\left(\varphi,\psi\right)\mbox{, and }\\
\left\Vert \varphi\right\Vert _{q}^{2} & :=q\left(\varphi,\varphi\right),\;\forall\varphi,\psi\in\mathscr{Q}.
\end{align*}
\end{enumerate}
\begin{cor}
There is a bijection between sesquilinear forms $q$ on $\mathscr{Q}\subset\mathscr{H}$
satisfying (\ref{enu:q1})-(\ref{enu:q3}), and selfadjoint operators
$A$ in $\mathscr{H}$ s.t. $A\geq1$. Specifically, the correspondence
is as follows:

\begin{enumerate}
\item \label{enu:a1}Given $A$, set $\mathscr{Q}:=dom(A^{\frac{1}{2}})$,
and 
\begin{equation}
q\left(\varphi,\psi\right):=\langle A^{\frac{1}{2}}\varphi,A^{\frac{1}{2}}\psi\rangle,\quad\forall\varphi,\psi\in dom(A^{\frac{1}{2}}).\label{eq:q}
\end{equation}
\item \label{enu:a2}Conversely, if $q$ satisfies (i)-(iii), let $J:\mathscr{Q}\rightarrow\mathscr{H}$
be the inclusion map, and set $A:=(JJ^{*})^{-1}$; then $q$ is determined
by the RHS of (\ref{eq:q}).
\end{enumerate}
\end{cor}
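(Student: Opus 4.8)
The plan is to establish the two directions of the bijection separately, leaning on \thmref{Fe} for the hard analytic content. For direction \eqref{enu:a1}: given a selfadjoint $A \ge 1$, the operator $A^{1/2}$ is selfadjoint with $A^{1/2} \ge 1$ by the spectral theorem, so $\mathscr{Q} := dom(A^{1/2})$ is dense, and the form $q(\varphi,\psi) := \langle A^{1/2}\varphi, A^{1/2}\psi\rangle$ clearly satisfies \eqref{enu:q2} because $\|A^{1/2}\varphi\|^2 = \langle \varphi, A\varphi\rangle \ge \|\varphi\|^2$. For closedness \eqref{enu:q3}, I would observe that $\|\varphi\|_q^2 = \|A^{1/2}\varphi\|^2 + 0$... more carefully, $\langle\varphi,\psi\rangle_q = \langle A^{1/2}\varphi, A^{1/2}\psi\rangle$ makes $\mathscr{Q}$ isometric, via $\varphi \mapsto A^{1/2}\varphi$, to a subspace of $\mathscr{H}$; completeness of $\mathscr{Q}$ in the $q$-norm is then equivalent to $A^{1/2}$ being a closed operator, which holds since $A^{1/2}$ is selfadjoint. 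Thus every such $A$ yields a form $q$ satisfying \eqref{enu:q1}--\eqref{enu:q3}.

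For the converse direction \eqref{enu:a2}: given $q$ satisfying \eqref{enu:q1}--\eqref{enu:q3}, set $\mathscr{H}_A := \mathscr{Q}$ equipped with $\langle\cdot,\cdot\rangle_q$, which is a Hilbert space by \eqref{enu:q3}, and let $J : \mathscr{H}_A \to \mathscr{H}$ be the inclusion $J\varphi = \varphi$. By \eqref{enu:q2}, $\|J\varphi\| \le \|\varphi\|_q$, so $J$ is a bounded (contractive) operator with dense domain $\mathscr{Q}$, hence everywhere-defined and contractive on $\mathscr{H}_A$. Now I am in exactly the situation of \thmref{Fe} (with the abstract $\mathscr{H}_A$ in place of the completion there — but here no completion is needed since $q$ is already closed), so $A := (JJ^*)^{-1}$ is a well-defined selfadjoint operator, and it satisfies $A \ge 1$: indeed $JJ^*$ is a contraction, and one checks $JJ^* \ge 0$ with trivial kernel (if $JJ^*h = 0$ then $\|J^*h\|_q^2 = \langle h, JJ^*h\rangle = 0$, and $J^*$ has dense range because $J$ is injective), so $0 < JJ^* \le I$ gives $(JJ^*)^{-1} \ge I$. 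To recover $q$ from $A$ via the RHS of \eqref{eq:q}, I would show $dom(A^{1/2}) = \mathscr{Q}$ and $\langle A^{1/2}\varphi, A^{1/2}\psi\rangle = q(\varphi,\psi)$. The cleanest route: from $A = (JJ^*)^{-1}$ one gets $A^{-1} = JJ^*$, hence $A^{-1/2} = (JJ^*)^{1/2} = |J^*|$, and the polar decomposition of $J^*$ (\thmref{vN}) gives a partial isometry identifying $\mathscr{H}_A$ isometrically with $dom(A^{1/2})$ in the graph norm; tracking the inner products through this identification yields $q(\varphi,\psi) = \langle A^{1/2}\varphi, A^{1/2}\psi\rangle$ for $\varphi,\psi \in \mathscr{Q} = dom(A^{1/2})$.

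The two constructions are mutually inverse: starting from $A \ge 1$, forming $q$ as in \eqref{enu:a1}, then forming $(JJ^*)^{-1}$ as in \eqref{enu:a2}, returns $A$ because the $q$-completion is $dom(A^{1/2})$ with the graph norm and $J$ is the inclusion $dom(A^{1/2}) \hookrightarrow \mathscr{H}$, whose adjoint computation reproduces $A$ exactly as in \thmref{Fe}; and starting from $q$, the form recovered from $(JJ^*)^{-1}$ is $q$ by the previous paragraph. I expect the main obstacle to be the bookkeeping in identifying $dom(A^{1/2})$ with $\mathscr{H}_A$ and verifying that the norms match up — i.e., showing $ran(J^*)$ is a core for $A^{1/2}$ and that $J^*$ intertwines $\langle\cdot,\cdot\rangle_q$ with $\langle A^{1/2}\cdot, A^{1/2}\cdot\rangle$ — rather than any single hard inequality; everything else is a direct application of von Neumann's theorem and the spectral calculus. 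One should also note the role of assumption \eqref{enu:q2}: the lower bound $\ge 1$ (rather than mere positivity) is what makes $J$ bounded and $(JJ^*)^{-1}$ exist as an everywhere-nonsingular operator, so it is used essentially and not just for convenience.
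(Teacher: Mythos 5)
Your proposal is correct and follows the same route as the paper, which disposes of the forward direction as routine and reduces the converse to the construction $J:\mathscr{Q}\rightarrow\mathscr{H}$, $A:=(JJ^{*})^{-1}$ from the proof of \thmref{Fe}; you simply supply the details (injectivity of $JJ^{*}$ from density of $\mathscr{Q}$, the identification $dom(A^{\frac{1}{2}})=\mathscr{Q}$ via the polar decomposition of $J^{*}$) that the paper leaves implicit. The only quibble is that triviality of $\ker(JJ^{*})$ rests on $\ker(J^{*})=(ran\,J)^{\perp}=\mathscr{Q}^{\perp}=0$, i.e.\ on density of $\mathscr{Q}$ in $\mathscr{H}$, rather than on injectivity of $J$ as you state.
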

\begin{proof}
The non-trivial part (\ref{enu:a2}) $\Rightarrow$ (\ref{enu:a1})
follows from the proof of \thmref{Fe}.
\end{proof}
\begin{lem}
\label{lem:essA}Let $A$ be a semibounded operator as in (\ref{eq:s1}),
then $A$ is essentially selfadjoint iff $A\mathscr{D}$ is dense
in $\mathscr{H}$, i.e., $\overline{\mbox{ran}(A)}=\mathscr{H}$.
(Contrast, $\overline{A}=A^{**}$ denotes the closure of $A$.)
\end{lem}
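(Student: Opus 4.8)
The statement is the standard criterion for essential self-adjointness of a semibounded operator, phrased via the range. The plan is to use the general fact that a densely defined symmetric operator $A$ with $A \geq I$ is essentially self-adjoint if and only if its deficiency space $N_{-1}(A^{*}) = \ker(A^{*} + I)$ is trivial; and then to observe that, since $A \geq I$, there are no eigenvalues at $-1$ on the "real" side, so the only obstruction is at $-1$ (equivalently, one checks $\ker(A^{*} - \lambda)= 0$ for one, hence all, $\lambda$ in the lower half-plane and on $(-\infty, 0)$). Concretely, I would prove the chain: $A$ essentially selfadjoint $\iff$ $\ker(A^{*}+I) = 0$ $\iff$ $\overline{\operatorname{ran}(A+I)} = \mathscr{H}$ $\iff$ $\overline{\operatorname{ran}(A)} = \mathscr{H}$, where the last equivalence uses $A \geq I$.

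First I would record that since $A$ is symmetric and $A \geq I$, it is in particular semibounded below by $1 > 0$, so $A$ has equal deficiency indices and the spectrum of any selfadjoint extension lies in $[1,\infty)$; in particular $-1 \notin \sigma_{p}(A^{*})$ coming from the "good" half. Then: $f \perp \operatorname{ran}(A + I)$ means $\langle f, (A+I)\varphi\rangle = 0$ for all $\varphi \in \mathscr{D}$, i.e. $\langle f, A\varphi\rangle = -\langle f, \varphi\rangle$, which says precisely $f \in \operatorname{dom}(A^{*})$ with $A^{*} f = -f$. Hence $\operatorname{ran}(A+I)^{\perp} = \ker(A^{*} + I) = N_{-1}(A^{*})$. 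By the standard deficiency-index theory (von Neumann; see the references already cited in the paper), a symmetric operator bounded below by a positive constant is essentially selfadjoint iff this space is $\{0\}$ — one only needs to kill the deficiency at a single point in the resolvent set of the Friedrichs extension, and $-1$ works since $-1 < 1$. Therefore $A$ is essentially selfadjoint $\iff \overline{\operatorname{ran}(A+I)} = \mathscr{H}$.

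It remains to replace $A + I$ by $A$. Here I would invoke \thmref{Fe}: the Friedrichs extension is $\widetilde{A} = (JJ^{*})^{-1}$, which satisfies $\widetilde{A} \geq I$, so $0$ lies in the resolvent set of $\widetilde{A}$ and $\widetilde{A}$ has bounded inverse $JJ^{*}$ with dense range $\operatorname{ran}(JJ^{*}) = \operatorname{dom}(\widetilde{A}^{-1})$... more directly: $\operatorname{ran}(A)$ and $\operatorname{ran}(A+I)$ have the same closure because for $\varphi \in \mathscr{D}$ one has $\|A\varphi\|\asymp \|(A+I)\varphi\|$ in a way compatible with density — the cleanest argument is that both $\overline{\operatorname{ran}(A)}^{\perp}$ and $\overline{\operatorname{ran}(A+I)}^{\perp}$ are computed against the same dense set $\mathscr{D}$, and $\langle f, A\varphi\rangle = 0\ \forall\varphi \iff f \in \ker(A^{*})$, while $A^{*} \supseteq A \geq I$ forces... actually one must be slightly careful: $\ker(A^{*})$ need not be $\ker(A^{*}+I)$. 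The correct bridge is: $\overline{\operatorname{ran}(A)} = \mathscr{H} \iff \ker(A^{*}) = 0$, and since $A$ is semibounded with lower bound $1$, $A^{*}$ has no kernel precisely when it has no eigenvalue at $-1$ either — both are equivalent to essential selfadjointness because the deficiency subspace is "rigid" under the shift by the lower bound; more transparently, $\ker(A^{*}) = 0 \iff \overline{\operatorname{ran}(A)}=\mathscr{H} \iff A$ has a bounded-below selfadjoint extension with $0$ in the resolvent — and by Friedrichs this is automatic, so the real content is again just the deficiency criterion. I would therefore present the proof as: (1) $\overline{\operatorname{ran}(A)}=\mathscr{H} \iff \ker(A^{*})=0$ (elementary); (2) $\ker(A^{*}) = 0 \iff A$ essentially selfadjoint, using that $A-I \geq 0$ so one may test the deficiency space at the point $0 < 1$ (equivalently, translate the whole argument by $I$ and quote \thmref{Fe} to know the Friedrichs extension is invertible).

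\textbf{Main obstacle.} The one subtle point — and the step I expect to need the most care — is the implication $\ker(A^{*}) = 0 \Rightarrow A$ essentially selfadjoint: a priori the deficiency indices are controlled by $\ker(A^{*} \pm i)$, not by $\ker(A^{*})$, and the passage requires genuinely using the lower bound $A \geq I$ (so that the spectrum of $\overline{A}$ is real and bounded below, whence $\overline{A} = A^{*}$ as soon as $\operatorname{ran}(\overline{A})$ is dense). I would handle this by the semiboundedness argument: if $A \geq I$ then for $f \in \operatorname{dom}(A^{*})$ with $A^{*}f = -f$ one would get $-\|f\|^{2} = \langle f, A^{*}f\rangle = \lim \langle \varphi_{n}, A\varphi_{n}\rangle \geq 0$ along an approximating graph-sequence (using that $A^{*}$ restricted to graph-limits of $\mathscr{D}$ agrees with $\overline{A}$, and $\overline{A} \geq I$), forcing $f = 0$; this shows $N_{-1}(A^{*}) = 0$ automatically, and likewise $N_{\lambda}(A^{*}) = 0$ for all $\lambda \notin [1,\infty)$, so the deficiency indices vanish iff the single space $\ker(A^{*})$... no — iff $\ker(\overline{A}) $ is all of the obstruction, i.e. iff $\operatorname{ran}(\overline A)$ is dense, i.e. iff $\operatorname{ran}(A)$ is dense. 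Packaging this cleanly, without circularity, is the crux; everything else is routine duality bookkeeping with the adjoint as recalled in \secref{set}.
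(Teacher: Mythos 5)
Your high-level strategy---reduce essential selfadjointness of the semibounded $A$ to the vanishing of a single deficiency space at a real point below the lower bound, and identify that space with $\mathrm{ran}(A)^{\perp}=\ker(A^{*})$---is the same route the paper takes (its proof is a one-line appeal to von Neumann's deficiency-index theory together with $A\geq 1$), and your bookkeeping $\ker(A^{*}+I)=\mathrm{ran}(A+I)^{\perp}$ and the easy direction are fine. The difficulty is the step you yourself single out as the crux, namely $\ker(A^{*})=0\Rightarrow A$ essentially selfadjoint: the argument you sketch for it is wrong. You claim that for $f\in dom(A^{*})$ with $A^{*}f=-f$ one gets $-\left\Vert f\right\Vert ^{2}=\left\langle f,A^{*}f\right\rangle =\lim_{n}\left\langle \varphi_{n},A\varphi_{n}\right\rangle \geq0$ along an approximating graph-sequence, and conclude that $N_{-1}(A^{*})=0$ \emph{automatically}. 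But a general $f\in dom(A^{*})$ is \emph{not} a graph-limit of vectors from $\mathscr{D}$---that is exactly what fails when $A$ is not essentially selfadjoint---so $\left\langle f,A^{*}f\right\rangle $ cannot be computed this way, and the quadratic form of $A^{*}$ need not be bounded below. If your claim were correct, \emph{every} operator with $A\geq I$ would be essentially selfadjoint; for a counterexample take $A=-d^{2}/dx^{2}$ on $C_{c}^{\infty}(0,1)$, which satisfies $A\geq\pi^{2}I$ by the Poincar\'e inequality, yet $\ker(A^{*}+I)=\mathrm{span}\{e^{x},e^{-x}\}$ is two-dimensional and the deficiency indices are $(2,2)$. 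Your closing paragraph half-recognizes the problem but does not resolve it, so the key implication is left unproved.

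The clean repair uses the Friedrichs extension already constructed in \thmref{Fe}. Set $\widetilde{A}:=(JJ^{*})^{-1}$; it is selfadjoint, satisfies $\widetilde{A}\geq I$, hence is surjective, and $A\subseteq\widetilde{A}\subseteq A^{*}$. Given $f\in dom(A^{*})$, surjectivity yields $g\in dom(\widetilde{A})$ with $\widetilde{A}g=A^{*}f$; then $A^{*}(f-g)=0$, and $\ker(A^{*})=\mathrm{ran}(A)^{\perp}=0$ forces $f=g\in dom(\widetilde{A})$. Hence $A^{*}=\widetilde{A}$, so $\overline{A}=A^{**}=\widetilde{A}^{*}=\widetilde{A}$ is selfadjoint. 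Together with the elementary converse ($\overline{A}$ selfadjoint and $\geq I$ implies $0$ is in its resolvent set, so $\mathscr{H}=\mathrm{ran}(\overline{A})\subseteq\overline{\mathrm{ran}(A)}$), this gives the lemma. Alternatively one may quote the constancy of $\dim\ker(A^{*}-\lambda)$ on the connected set $\mathbb{C}\setminus[1,\infty)$ of points of regular type, which contains $0$, $-1$ and $\pm i$ simultaneously---but that statement is precisely the black box you were trying to open, so it should either be cited outright (as the paper does) or proved via the Friedrichs argument above, not by the form-positivity argument in your last paragraph.
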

\begin{proof}
Follows from von Neumann's deficiency index theory, and the assumption
that $A\geq1$ (see (\ref{eq:s1}).)
\end{proof}
By \lemref{essA}, if $A$ is not essentially selfadjoint, then 
\begin{equation}
C:A\varphi\longrightarrow\varphi\label{eq:s7}
\end{equation}
is contractive in $ran\left(A\right)$ (proper subspace in $\mathscr{H}$,
i.e., not dense in $\mathscr{H}$.)

Proof that (\ref{eq:s7}) is contractive: By (\ref{eq:s1}), we have
\[
\left\Vert \varphi\right\Vert ^{2}\underset{\left(\text{Schwarz}\right)}{\leq}\left\langle \varphi,A\varphi\right\rangle \leq\left\Vert \varphi\right\Vert \left\Vert A\varphi\right\Vert 
\]
which implies $\left\Vert \varphi\right\Vert \leq\left\Vert A\varphi\right\Vert $,
for all $\varphi\in\mathscr{D}$. 

We have proved that $CA\varphi=\varphi$ holds, and $C$ is s.a. and
contractive.
\begin{thm}[Krein \cite{MR0048704,MR0048703,MR0405049}]
 We introduce the set 
\begin{align}
\mathscr{B}_{A}:= & \big\{ B\mid B^{*}=B,\:dom\left(B\right)=\mathscr{H},\ \left\Vert Bh\right\Vert \leq\left\Vert h\right\Vert ,\;\forall h\in\mathscr{H},\label{eq:s8}\\
 & \quad\mbox{and }C\subset B\;\mbox{i.e., }CA\varphi=BA\varphi,\;\forall\varphi\in\mathscr{D};\;\mbox{see }\left(\ref{eq:s7}\right)\big\},\nonumber 
\end{align}
then $\mathscr{B}_{A}\neq\emptyset$. 
\end{thm}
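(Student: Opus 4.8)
The plan is to exhibit a concrete element of $\mathscr{B}_{A}$, namely the operator $B:=JJ^{*}$ already introduced in \thmref{Fe} (the one whose inverse is the Friedrichs extension of $A$); once $B$ is shown to lie in $\mathscr{B}_{A}$, nonemptiness follows at once.

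First I would check the three structural requirements in the definition (\ref{eq:s8}). Since $J:\mathscr{H}_{A}\to\mathscr{H}$ is everywhere defined and bounded, so is $J^{*}:\mathscr{H}\to\mathscr{H}_{A}$, and hence $B=JJ^{*}$ is a bounded operator with $dom(B)=\mathscr{H}$. It is selfadjoint, since $(JJ^{*})^{*}=(J^{*})^{*}J^{*}=JJ^{*}$ for bounded $J$; in fact $B\ge 0$ because $\langle JJ^{*}h,h\rangle=\Vert J^{*}h\Vert_{A}^{2}\ge 0$ for every $h\in\mathscr{H}$. It is contractive: by (\ref{eq:s2}) one has $\Vert J\Vert\le 1$, so $\Vert Bh\Vert\le\Vert J\Vert\,\Vert J^{*}h\Vert\le\Vert J\Vert^{2}\Vert h\Vert\le\Vert h\Vert$.

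Next I would verify the compatibility condition $C\subset B$, i.e.\ $BA\varphi=CA\varphi$ for all $\varphi\in\mathscr{D}$. This requires no new computation: the chain of equivalences in the proof of \thmref{Fe} establishes precisely that $JJ^{*}A\varphi=\varphi$ for every $\varphi\in\mathscr{D}$ — that proof reduces $(JJ^{*})^{-1}\supseteq A$ to the identity $\langle\psi,\varphi\rangle=\langle\psi,JJ^{*}A\varphi\rangle$ on $\mathscr{D}$, and density of $\mathscr{D}$ in $\mathscr{H}$ upgrades this to $\varphi=JJ^{*}A\varphi$. Since $CA\varphi=\varphi$ by the definition (\ref{eq:s7}) of $C$, we conclude $BA\varphi=\varphi=CA\varphi$ on $\mathscr{D}$, which is exactly the relation $C\subset B$ demanded in (\ref{eq:s8}). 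Therefore $B=JJ^{*}\in\mathscr{B}_{A}$, so $\mathscr{B}_{A}\neq\emptyset$.

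I do not expect a genuine obstacle; the only step that calls for a moment's attention is reading off $\varphi=JJ^{*}A\varphi$ (on $\mathscr{D}$) from \thmref{Fe}, which is immediate once one rewrites $(JJ^{*})^{-1}\supseteq A$ in that form. If one wanted more than mere nonemptiness — for instance, that $\mathscr{B}_{A}$ is a full operator interval with the inverses of the Friedrichs and Krein--von Neumann extensions as endpoints — one would instead invoke Krein's theory of selfadjoint contractive extensions of a Hermitian contraction, \cite{MR0048704,MR0048703,MR0405049}; but the witness $B=JJ^{*}$ already settles the statement as given.
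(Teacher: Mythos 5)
Your proposal is correct and coincides with the paper's own argument: the paper supplies no separate proof of the theorem but notes in the remark immediately following that ``in the previous discussions we proved that $JJ^{*}\in\mathscr{B}_{A}$,'' which is precisely your witness, with the membership verified exactly as you do via the boundedness and contractivity of $J$ from (\ref{eq:s2}) and the identity $\varphi=JJ^{*}A\varphi$ on $\mathscr{D}$ extracted from the proof of \thmref{Fe}.
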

\begin{cor}
For all $B\in\mathscr{B}_{A}$, we have $A\subset B^{-1}$ so $B^{-1}$
is an unbounded selfadjoint extension of $A$. 
\end{cor}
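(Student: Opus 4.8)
The plan is to extract from the defining condition $C\subset B$ the single working identity $BA\varphi=\varphi$ for all $\varphi\in\mathscr D$, which is immediate since $CA\varphi=\varphi$ by (\ref{eq:s7}) and $CA\varphi=BA\varphi$ by (\ref{eq:s8}); from this identity everything follows by unwinding definitions. First I would observe that $ran(B)\supseteq\mathscr D$, so $ran(B)$ is dense in $\mathscr H$; since $B=B^{*}$ this gives $\ker B=ran(B)^{\perp}\subseteq\mathscr D^{\perp}=\{0\}$, i.e.\ $B$ is injective. Hence $B^{-1}$ is a well-defined operator with $dom(B^{-1})=ran(B)$, a dense subspace of $\mathscr H$.

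Next I would verify $A\subset B^{-1}$. Fix $\varphi\in\mathscr D=dom(A)$. The relation $\varphi=BA\varphi$ exhibits $\varphi$ as an element of $ran(B)=dom(B^{-1})$ whose unique preimage under the injective operator $B$ is $A\varphi$; by definition of the inverse, $B^{-1}\varphi=A\varphi$. Thus $dom(A)\subseteq dom(B^{-1})$ and the two operators agree on $dom(A)$, which is precisely the containment $A\subset B^{-1}$.

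It remains to check that $B^{-1}$ is selfadjoint and unbounded. For selfadjointness I would invoke the standard fact that, for a densely defined injective operator with dense range, $(B^{-1})^{*}=(B^{*})^{-1}$; since $B^{*}=B$ this reads $(B^{-1})^{*}=B^{-1}$. (Equivalently, apply \thmref{vN} to $B$ viewed as a closed operator, or see \cite{DS88b}.) For unboundedness, note that $B^{-1}$ extends $A$, so it suffices that $A$ itself be unbounded. But $A$ is symmetric (its numerical range lies in $[1,\infty)$) and semibounded; were it bounded, its closure would be an everywhere-defined bounded selfadjoint operator, making $A$ essentially selfadjoint. This is excluded in the present setting, because the contraction $C$ of (\ref{eq:s7}) --- and hence the set $\mathscr B_A$ --- is only introduced under the hypothesis, via \lemref{essA}, that $A$ is not essentially selfadjoint. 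Therefore $B^{-1}$ is unbounded.

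I expect the only point needing care to be the unboundedness assertion, since it leans on the standing hypothesis that $A$ fails to be essentially selfadjoint; a self-contained alternative is to argue directly that if $B^{-1}$ were bounded then $B$, being a contraction, would be boundedly invertible with $ran(B)=\mathscr H$, so $B^{-1}$ would be a bounded selfadjoint extension of $A$ defined on all of $\mathscr H$, forcing $\overline A=B^{-1}=A^{*}$ and again contradicting the failure of essential selfadjointness of $A$. Everything else in the argument is a routine consequence of the identity $BA\varphi=\varphi$ and of $B=B^{*}$.
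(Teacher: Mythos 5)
Your proof is correct, and it is worth noting that the paper states this corollary with no proof at all, so your argument supplies what the authors omit; it is the standard argument one would expect here. The chain is exactly right: $C\subset B$ gives $BA\varphi=\varphi$ for all $\varphi\in\mathscr{D}$, hence $ran\left(B\right)\supseteq\mathscr{D}$ is dense, $B=B^{*}$ forces $\ker B=ran\left(B\right)^{\perp}=\left\{ 0\right\} $, so $B^{-1}$ is well defined and $B^{-1}\varphi=A\varphi$ by injectivity, with selfadjointness of $B^{-1}$ from $\left(B^{-1}\right)^{*}=\left(B^{*}\right)^{-1}$. You are also right to single out the unboundedness assertion as the only delicate point: it is not a formal consequence of $A\subset B^{-1}$ alone, but of the standing hypothesis made just before $\mathscr{B}_{A}$ is introduced (via \lemref{essA}) that $A$ is not essentially selfadjoint, hence in particular unbounded; your self-contained fallback argument for this is also sound.
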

\begin{rem}
Krein studied $\mathscr{B}_{A}$ as an order lattice. Define $B_{1}\leq B_{2}$
meaning $\left\langle h,B_{1}h\right\rangle \leq\left\langle h,B_{2}h\right\rangle $,
$\forall h\in\mathscr{H}$. In the previous discussions we proved
that $JJ^{*}\in\mathscr{B}_{A}$.
\end{rem}

\section{Application to graph Laplacians, infinite networks}

We now turn to a family of semibounded operators from mathematical
physics. They arose first in the study of large (infinite) networks;
and in these studies entail important choices of Hilbert spaces, and
of selfadjoint realizations. The best known instance is perhaps systems
of resistors on infinite graphs, see e.g., \cite{MR920811,MR2735315,MR2862151,MR3096586,MR3246982,MR3441734}.
An early paper is \cite{MR0436847} which uses an harmonic analysis
of infinite systems of resistors in dealing with spin correlations
of states of finite energy of the isotropic ferromagnetic Heisenberg
model.

For the discussion of the graph Laplacian $\Delta$, we first introduce
the following setting of infinite networks:
\begin{itemize}
\item $V$: the vertex set, a given infinite countable discrete set.
\item $E\subset V\times V\backslash\left\{ \mbox{diagonal}\right\} $ the
edges, such that $\left(xy\right)\in E\Longleftrightarrow\left(yx\right)\in E$,
and for all $x\in V$, $\#\left\{ y\sim x\right\} <\infty$, where
$x\sim y$ means $\left(xy\right)\in E$.
\item $c:E\rightarrow\mathbb{R}_{+}$ a given conductance function. 
\item Set 
\begin{equation}
\left(\Delta u\right)\left(x\right):=\sum_{y\sim x}c_{xy}\left(u\left(x\right)-u\left(y\right)\right),\label{eq:s9}
\end{equation}
defined for all functions $u$ on $V$, and let 
\begin{equation}
c\left(x\right)=\sum_{y\sim x}c_{xy},\quad x\in V.\label{eq:tc}
\end{equation}
\item $\mathscr{H}_{E}$ will be the Hilbert space of finite-energy functions
on $V$; more precisely, 
\begin{equation}
u\in\mathscr{H}_{E}\underset{\mbox{Def.}}{\Longleftrightarrow}\left\Vert u\right\Vert _{\mathscr{H}_{E}}^{2}=\frac{1}{2}\sum_{\left(xy\right)\in E}c_{xy}\left|u\left(x\right)-u\left(y\right)\right|^{2}<\infty.\label{eq:he}
\end{equation}
Set 
\begin{equation}
\left\langle u,v\right\rangle _{\mathscr{H}_{E}}=\frac{1}{2}\sum_{\left(xy\right)\in E}c_{xy}(\overline{u\left(x\right)}-\overline{u\left(y\right)})\left(v\left(x\right)-v\left(y\right)\right).\label{eq:s10}
\end{equation}
\item We assume that $\left(V,E,c\right)$ is connected: For all pairs $x,y\in V$,
$\exists$ $\left(x_{i}\right)_{i=0}^{n}\subset V$ s.t. $x_{0}=x$,
$\left(x_{i}x_{i+1}\right)\in E$, $x_{n}=y$. 
\end{itemize}
\begin{lem}
\label{lem:dipole}Fix a base-point $o\in V$. Then for all $x\in V$,
there is a unique $v_{x}\in\mathscr{H}_{E}$ such that 
\begin{equation}
f\left(x\right)-f\left(o\right)=\left\langle v_{x},f\right\rangle _{\mathscr{H}_{E}},\quad\forall f\in\mathscr{H}_{E};\label{eq:g4}
\end{equation}
The vertex $v_{x}$ is called a \uline{dipole}. 
\end{lem}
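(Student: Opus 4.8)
The plan is to recognize $f\mapsto f(x)-f(o)$ as a bounded linear functional on $\mathscr{H}_{E}$ and then invoke the Riesz representation theorem; uniqueness of $v_{x}$ is then automatic. First I would note that, since the norm (\ref{eq:he}) only depends on differences $u(p)-u(q)$, the quantity $f(x)-f(o)$ is insensitive to adding a constant to $f$, so the functional $L_{x}(f):=f(x)-f(o)$ is well defined on $\mathscr{H}_{E}$ (viewed modulo constants, or equivalently on representatives normalized by $f(o)=0$) and is plainly linear.

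The crux is boundedness of $L_{x}$, and this is precisely where connectedness of $(V,E,c)$ is used. Choose a finite path $o=x_{0}\sim x_{1}\sim\cdots\sim x_{n}=x$ with $(x_{i}x_{i+1})\in E$ for each $i$. Write $f(x)-f(o)$ as the telescoping sum $\sum_{i=0}^{n-1}\big(f(x_{i+1})-f(x_{i})\big)$, and observe that for a single edge $(yz)\in E$ the definition (\ref{eq:he}) gives $c_{yz}\,|f(y)-f(z)|^{2}\le 2\left\Vert f\right\Vert _{\mathscr{H}_{E}}^{2}$, hence $|f(y)-f(z)|\le\sqrt{2/c_{yz}}\,\left\Vert f\right\Vert _{\mathscr{H}_{E}}$. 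Summing along the path,
\[
|L_{x}(f)|\le\sum_{i=0}^{n-1}|f(x_{i+1})-f(x_{i})|\le\Big(\sqrt{2}\sum_{i=0}^{n-1}c_{x_{i}x_{i+1}}^{-1/2}\Big)\left\Vert f\right\Vert _{\mathscr{H}_{E}}.
\]
Since the path is finite, the constant $C_{x}:=\sqrt{2}\sum_{i=0}^{n-1}c_{x_{i}x_{i+1}}^{-1/2}$ is finite and independent of $f$, so $L_{x}$ is continuous on $\mathscr{H}_{E}$.

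As $\mathscr{H}_{E}$ is a Hilbert space and $L_{x}\in\mathscr{H}_{E}^{*}$, the Riesz representation theorem yields a unique $v_{x}\in\mathscr{H}_{E}$ with $L_{x}(f)=\left\langle v_{x},f\right\rangle _{\mathscr{H}_{E}}$ for all $f\in\mathscr{H}_{E}$, which is exactly (\ref{eq:g4}), and uniqueness is part of that theorem. I expect the only substantive step to be the boundedness estimate — specifically, using connectedness to produce a finite path and the per-edge inequality $c_{yz}|f(y)-f(z)|^{2}\le 2\left\Vert f\right\Vert _{\mathscr{H}_{E}}^{2}$; the remaining points (well-definedness modulo constants, and the appeal to Riesz, which relies on $\mathscr{H}_{E}$ being complete as assumed in the standing setup) are routine. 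I would also remark in passing that the same estimate is the standard device for proving that $\mathscr{H}_{E}$ is in fact complete, should one wish to verify that hypothesis rather than assume it.
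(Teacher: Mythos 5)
Your proof is correct and is exactly the standard argument that the paper omits by deferring to the references \cite{MR3096586,JT14c}: connectedness gives a finite path, the per-edge bound from the energy form gives boundedness of the functional $f\mapsto f(x)-f(o)$, and Riesz representation supplies existence and uniqueness of $v_{x}$. The only cosmetic point is that with the symmetrized sum and the factor $\tfrac12$ in (\ref{eq:he}) one actually gets $c_{yz}\left|f(y)-f(z)\right|^{2}\leq\left\Vert f\right\Vert _{\mathscr{H}_{E}}^{2}$, so your constant is safe but not sharp.
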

\begin{proof}
see \cite{MR3096586,JT14c}. 
\end{proof}
\begin{lem}
\label{lem:del}In $\mathscr{H}_{E}$, we have $\delta_{x}=c\left(x\right)v_{x}-\sum_{y\sim x}c_{xy}v_{y}$,
and 
\[
\left|\left\langle \varphi,v_{x}\right\rangle _{\mathscr{H}_{E}}\right|=\left|\varphi\left(x\right)-\varphi\left(o\right)\right|\leq\sqrt{2}\left\Vert \varphi\right\Vert _{l^{2}},\quad\forall\varphi\in\mathscr{D}.
\]
\end{lem}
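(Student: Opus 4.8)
The plan is to establish the two assertions separately, both by direct computation with the energy inner product (\ref{eq:s10}) and the reproducing property (\ref{eq:g4}) of the dipole vectors $v_x$.

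For the identity $\delta_{x}=c\left(x\right)v_{x}-\sum_{y\sim x}c_{xy}v_{y}$, I would first check that $\delta_{x}\in\mathscr{H}_{E}$: by local finiteness only the finitely many edges incident to $x$ contribute to $\left\Vert \delta_{x}\right\Vert _{\mathscr{H}_{E}}^{2}$, and (\ref{eq:s10}) together with (\ref{eq:tc}) gives $\left\Vert \delta_{x}\right\Vert _{\mathscr{H}_{E}}^{2}=c\left(x\right)<\infty$. Since $\mathscr{H}_{E}$ is a Hilbert space, it then suffices to verify that $\left\langle \delta_{x},f\right\rangle _{\mathscr{H}_{E}}=\left\langle c\left(x\right)v_{x}-\sum_{y\sim x}c_{xy}v_{y},f\right\rangle _{\mathscr{H}_{E}}$ for every $f\in\mathscr{H}_{E}$. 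Expanding the left side straight from (\ref{eq:s10}), only the ordered edges $(x,y)$ and $(y,x)$ with $y\sim x$ survive, and after accounting for the factor $\tfrac12$ they collapse to $\sum_{y\sim x}c_{xy}\left(f\left(x\right)-f\left(y\right)\right)=\left(\Delta f\right)\left(x\right)$ by (\ref{eq:s9}). The right side, via (\ref{eq:g4}), equals $c\left(x\right)\left(f\left(x\right)-f\left(o\right)\right)-\sum_{y\sim x}c_{xy}\left(f\left(y\right)-f\left(o\right)\right)$; the base-point terms $f\left(o\right)$ cancel because $\sum_{y\sim x}c_{xy}=c\left(x\right)$, leaving again $\sum_{y\sim x}c_{xy}\left(f\left(x\right)-f\left(y\right)\right)=\left(\Delta f\right)\left(x\right)$. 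The two agree for all $f$, so the vectors are equal.

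For the estimate, use that $\left\langle \cdot,\cdot\right\rangle _{\mathscr{H}_{E}}$ is linear in the second variable, so (\ref{eq:g4}) gives $\left\langle \varphi,v_{x}\right\rangle _{\mathscr{H}_{E}}=\overline{\left\langle v_{x},\varphi\right\rangle _{\mathscr{H}_{E}}}=\overline{\varphi\left(x\right)-\varphi\left(o\right)}$, hence $\left|\left\langle \varphi,v_{x}\right\rangle _{\mathscr{H}_{E}}\right|=\left|\varphi\left(x\right)-\varphi\left(o\right)\right|$. If $x=o$ this is $0$; if $x\neq o$, the triangle inequality together with the elementary bound $\left|a\right|+\left|b\right|\leq\sqrt{2}\sqrt{\left|a\right|^{2}+\left|b\right|^{2}}$ yields $\left|\varphi\left(x\right)-\varphi\left(o\right)\right|\leq\sqrt{2}\sqrt{\left|\varphi\left(x\right)\right|^{2}+\left|\varphi\left(o\right)\right|^{2}}\leq\sqrt{2}\left\Vert \varphi\right\Vert _{l^{2}}$, the last step because $x$ and $o$ are distinct vertices and $\varphi\in\mathscr{D}$ is finitely supported.

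I do not expect a genuine obstacle here; the only care required is bookkeeping — confirming $\delta_{x}\in\mathscr{H}_{E}$, handling the "sum over ordered edges with a factor $\tfrac12$" convention in (\ref{eq:s10}) when expanding $\left\langle \delta_{x},f\right\rangle _{\mathscr{H}_{E}}$, and noticing the cancellation of the $f\left(o\right)$ terms. The conceptual content is simply that the graph Laplacian $\Delta$ is the operator that intertwines pointwise evaluation with the dipole vectors, and that $\delta_x$ is the energy-space representative of a single row of $\Delta$.
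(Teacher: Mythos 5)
Your proof is correct, and it is essentially the standard argument; the paper itself gives no proof of this lemma (it simply cites \cite{MR2735315}), so your write-up supplies the computation the paper omits. Both verifications are sound: $\left\langle \delta_{x},f\right\rangle _{\mathscr{H}_{E}}=\left(\Delta f\right)\left(x\right)$ by the ordered-edge bookkeeping with the factor $\tfrac12$, and the dipole expansion of the right-hand side collapses to the same quantity after the $f\left(o\right)$ terms cancel via $\sum_{y\sim x}c_{xy}=c\left(x\right)$; testing against all $f\in\mathscr{H}_{E}$ then gives equality of the vectors (correctly, as elements of $\mathscr{H}_{E}$, i.e.\ modulo the constants, which is what ``in $\mathscr{H}_{E}$'' means here). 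The only bookkeeping item you might add: when $o\sim x$ the sum $\sum_{y\sim x}c_{xy}v_{y}$ includes $v_{o}$, which by (\ref{eq:g4}) with $x=o$ is the zero vector of $\mathscr{H}_{E}$, so the formula is still well-posed. Your estimate $\left|a-b\right|\leq\sqrt{2}\left(\left|a\right|^{2}+\left|b\right|^{2}\right)^{1/2}$ is the same bound the authors obtain later (in the proof of (\ref{eq:d3})) by writing $\varphi\left(x\right)-\varphi\left(o\right)=\left\langle \delta_{x}-\delta_{o},\varphi\right\rangle _{l^{2}}$ and applying Cauchy--Schwarz with $\left\Vert \delta_{x}-\delta_{o}\right\Vert _{l^{2}}=\sqrt{2}$; the two are equivalent.
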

\begin{proof}
See \cite{MR2735315}.
\end{proof}
\begin{rem}
Let $\mathscr{H}=l^{2}\left(V\right)$, $\mathscr{D}=span\left\{ \delta_{x}\mid x\in V\right\} $.
Define the \emph{graph} \emph{Laplacian} $\Delta$ by (\ref{eq:s9}).
Let $\mathscr{H}_{E}$ be the energy-Hilbert space in (\ref{eq:he}).
Then (\ref{eq:s1}), (\ref{eq:s3}) translate into:
\begin{align}
\left\langle \delta_{x},\Delta\delta_{x}\right\rangle _{2} & =c\left(x\right)=\left\Vert \delta_{x}\right\Vert _{\mathscr{H}_{E}}^{2}\mbox{, and}\\
\left\langle \delta_{x},\Delta\delta_{y}\right\rangle _{2} & =-c_{xy}=\left\langle \delta_{x},\delta_{y}\right\rangle _{\mathscr{H}_{E}},\;\forall\left(xy\right)\in E,\;x\neq y.
\end{align}
Let $\mathscr{H}_{\Delta}$ be the completion of $\mathscr{D}=span\left\{ \delta_{x}\right\} $
with respect to $\left\langle \varphi,\Delta\varphi\right\rangle _{l^{2}}$,
$\varphi\in\mathscr{D}$. (We have $\left\langle \varphi,\Delta\varphi\right\rangle _{l^{2}}=\left\Vert \varphi\right\Vert _{\mathscr{H}_{E}}^{2}$,
valid for $\forall\varphi\in\mathscr{D}$.)

\textbf{Conclusion.} $\mathscr{H}_{\Delta}\hookrightarrow\mathscr{H}_{E}$
is an isometric inclusion, but as a subspace. The closure is $F_{in}=\mathscr{H}_{E}\ominus Harm$,
where $Harm$ is the subspace of Harmonic functions $h\in\mathscr{H}_{E}$,
i.e., $\Delta h=0$. 
\end{rem}
\begin{defn}
\emph{Two unbounded closable operators}:

The graph Laplacian is denoted by $\Delta_{2}$, as an operator in
$l^{2}$; and by $\Delta_{E}$ when acting in $\mathscr{H}_{E}$.
In both cases, $\Delta$ is given by (\ref{eq:s9}), defined for all
functions $u$ on $V$ . 
\end{defn}
\begin{defn}
Let $\left(V,E,c\right)$ be as before. Fix a base-point $o\in V$,
and let $v_{x}=v_{xo}=$ dipole (see \lemref{dipole}). Let 
\begin{align}
\mathscr{D}_{2} & =span\left\{ \delta_{x}\right\} \subset l^{2}\label{eq:d1}\\
\mathscr{D}_{E} & =span\left\{ v_{x}\right\} _{x\in V\backslash\left\{ o\right\} }\subset\mathscr{H}_{E}.\label{eq:d2}
\end{align}
Set 
\begin{alignat}{2}
l^{2} & \supset\mathscr{D}_{2}\xrightarrow{\;K\;}\mathscr{H}_{E}, & \quad & K(\delta_{x})=\delta_{x},\label{eq:s16}\\
\mathscr{H}_{E} & \supset\mathscr{D}_{E}\xrightarrow{\;L\;}l^{2}, & \quad & L(v_{x})=\delta_{x}-\delta_{o}.\label{eq:s17}
\end{alignat}
\end{defn}
\begin{lem}
We have 
\begin{equation}
\left\langle K\varphi,h\right\rangle _{\mathscr{H}_{E}}=\left\langle \varphi,Lh\right\rangle _{l^{2}},\quad\forall\varphi\in\mathscr{D}_{2},\forall h\in\mathscr{D}_{E}.\label{eq:KL1}
\end{equation}
\begin{figure}[H]
\[
\xymatrix{\mathscr{D}_{2}\subset l^{2}\ar@/^{2pc}/[rr]^{K}\ar@/^{1pc}/[rr]^{L^{*}} &  & \mathscr{H}_{E}\supset\mathscr{D}_{E}\ar@/^{2pc}/[ll]^{K^{*}}\ar@/^{1pc}/[ll]^{L}}
\]

\caption{\label{fig:KL}$dom\left(K\right)=\mathscr{D}_{2}$, $dom\left(L\right)=\mathscr{D}_{E}$,
$K\subset L^{*}$, and $L\subset K^{*}$.}
\end{figure}
\end{lem}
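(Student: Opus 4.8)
The plan is to check the identity (\ref{eq:KL1}) on the spanning vectors of $\mathscr{D}_2$ and $\mathscr{D}_E$ and then extend by sesquilinearity. Both sides of (\ref{eq:KL1}) are conjugate-linear in $\varphi$ (since $K$ is linear, and $\langle\cdot,\cdot\rangle_{\mathscr{H}_E}$, $\langle\cdot,\cdot\rangle_{l^2}$ are conjugate-linear in the first slot) and linear in $h$ (since $L$ is linear); hence it suffices to prove
\[
\langle K\delta_x, v_y\rangle_{\mathscr{H}_E} = \langle \delta_x, L v_y\rangle_{l^2}, \qquad x \in V,\ y \in V\setminus\{o\}.
\]

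For the right-hand side, (\ref{eq:s17}) gives $Lv_y = \delta_y - \delta_o$, so
\[
\langle \delta_x, L v_y\rangle_{l^2} = \langle \delta_x, \delta_y - \delta_o\rangle_{l^2} = \delta_{x,y} - \delta_{x,o}.
\]
For the left-hand side, first note that $\delta_x \in \mathscr{H}_E$ by \lemref{del}, and that $K\delta_x = \delta_x$ in $\mathscr{H}_E$ by (\ref{eq:s16}). Applying the dipole identity (\ref{eq:g4}) of \lemref{dipole} with $f := \delta_x$ yields
\[
\langle v_y, \delta_x\rangle_{\mathscr{H}_E} = \delta_x(y) - \delta_x(o) = \delta_{x,y} - \delta_{x,o};
\]
since this number is real, taking complex conjugates gives $\langle \delta_x, v_y\rangle_{\mathscr{H}_E} = \delta_{x,y} - \delta_{x,o}$, which matches the right-hand side and proves (\ref{eq:KL1}) on the spanning vectors.

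There is essentially no obstacle here. The only points deserving a word of care are that $L$ is well-defined on $\mathscr{D}_E$ because the spanning family $\{v_x\}$ in (\ref{eq:d2}) excludes $x = o$ (consistently with $v_o = 0$), and that one is entitled to insert $\delta_x$ into the dipole identity (\ref{eq:g4}), i.e., that $\delta_x$ genuinely lies in $\mathscr{H}_E$ — both of which are supplied by \lemref{del}. Once the identity holds on the spanning vectors, its extension to all of $\mathscr{D}_2 \times \mathscr{D}_E$ is immediate by (conjugate-)linearity in each argument.
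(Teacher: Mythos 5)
Your proof is correct and follows essentially the same route as the paper's: reduce by (sesqui)linearity to the case $\varphi=\delta_{x}$, $h=v_{y}$, evaluate the right side via $Lv_{y}=\delta_{y}-\delta_{o}$, and evaluate the left side via the dipole identity of \lemref{dipole}, both sides giving $\delta_{xy}-\delta_{xo}$. Your added remarks on the conjugation step and on why $\delta_{x}\in\mathscr{H}_{E}$ are slightly more careful than the paper's write-up but do not change the argument.
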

\begin{proof}
Note $K:l^{2}\rightarrow\mathscr{H}_{E}$ has dense domain $\mathscr{D}_{2}$
in $l^{2}$; and $J:\mathscr{H}_{E}\rightarrow l^{2}$ has dense domain
in $\mathscr{H}_{E}$. Moreover, it follows from (\ref{eq:KL1}) that 

(i) $K\subset L^{*}$, hence $dom(L^{*})$ is dense in $l^{2}$; and 

(ii) $L\subset K^{*}$, so $dom(K^{*})$ is dense in $\mathscr{H}_{E}$.
Also, both $K$ and $L$ are closable. See \figref{KL}. 

\emph{Proof of (\ref{eq:KL1})}: Use (\ref{eq:s9}) and linearity
to see that it is enough to consider the special case when $\varphi=\delta_{x}$,
$h=v_{y}$, so we must prove that the following holds ($x,y\in V$):
\begin{equation}
\left\langle K\delta_{x},v_{y}\right\rangle _{\mathscr{H}_{E}}=\left\langle \delta_{x},Lv_{y}\right\rangle _{2}.\label{eq:s14}
\end{equation}
Note that 
\begin{alignat*}{2}
\mbox{LHS}_{\left(\ref{eq:s14}\right)} & =\left\langle \delta_{x},v_{y}\right\rangle _{\mathscr{H}_{E}} &  & \text{by \ensuremath{\left(\ref{eq:s16}\right)}}\\
 & =\delta_{x}\left(y\right)-\delta_{x}\left(o\right) & \quad & \mbox{using the dipole property of \ensuremath{v_{y}}}\\
 & =\delta_{xy}-\delta_{xo};\\
\mbox{RHS}_{\left(\ref{eq:s14}\right)} & =\left\langle \delta_{x},\delta_{y}-\delta_{o}\right\rangle _{2} &  & \text{by \ensuremath{\left(\ref{eq:s17}\right)}}\\
 & =\delta_{xy}-\delta_{xo}.
\end{alignat*}
Thus (\ref{eq:s14}) holds. 
\end{proof}
\begin{cor}
\label{cor:KL}The two operators below are well-defined, and selfadjoint:
\begin{align}
 & \mbox{\ensuremath{K^{*}\overline{K}} is s.a. in \ensuremath{l^{2}}, and}\label{eq:st15}\\
 & \mbox{\ensuremath{L^{*}\overline{L}} is s.a. in \ensuremath{\mathscr{H}_{E}},}\label{eq:st16}
\end{align}
\textup{and both with dense domains}. Here, $\overline{\,\cdot\,}$
refers to the respective graph closures, and $*$ to adjoint operators,
i.e., $K^{*}:\mathscr{H}_{E}\longrightarrow l^{2}$, and $L^{*}:l^{2}\longrightarrow\mathscr{H}_{E}$;
both operators with dense domains, by (\ref{eq:KL1}). 

Moreover, (\ref{eq:st15})-(\ref{eq:st16}) are selfadjoint extensions
\begin{equation}
\Delta_{2}\subset K^{*}\overline{K}\mbox{ in \ensuremath{l^{2}}},\quad\mbox{and}\quad\Delta_{E}\subset L^{*}\overline{L}\:\mbox{in \ensuremath{\ensuremath{\mathscr{H}_{E}}}}.\label{eq:st17}
\end{equation}
In fact, $\overline{\Delta}_{2}=K^{*}\overline{K}$ (non-trivial;
see \cite{MR2432048,MR2799579}.)
\end{cor}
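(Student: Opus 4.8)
I would treat $\left(K,L\right)$ exactly as the Friedrichs-type pair of \thmref{Fe}, with $l^{2}$ and $\mathscr{H}_{E}$ swapping the roles of ``small'' and ``large'' Hilbert space in the two halves of the statement. First, the preceding lemma — precisely the identity (\ref{eq:KL1}) — gives $K\subset L^{*}$ and $L\subset K^{*}$, so $dom\left(K^{*}\right)\supseteq\mathscr{D}_{E}$ is dense in $\mathscr{H}_{E}$ and $dom\left(L^{*}\right)\supseteq\mathscr{D}_{2}$ is dense in $l^{2}$. By \thmref{in1} this makes $K$ and $L$ closable, with $\left(\overline{K}\right)^{*}=K^{*}$ and $\left(\overline{L}\right)^{*}=L^{*}$. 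Applying von Neumann's theorem \thmref{vN} to the closed densely defined operator $\overline{K}:l^{2}\rightarrow\mathscr{H}_{E}$ shows $K^{*}\overline{K}=\left(\overline{K}\right)^{*}\overline{K}$ is selfadjoint in $l^{2}$ with dense domain; the same argument for $\overline{L}:\mathscr{H}_{E}\rightarrow l^{2}$ gives that $L^{*}\overline{L}$ is selfadjoint in $\mathscr{H}_{E}$ with dense domain. This disposes of the first block of assertions.

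\textbf{The extension statements.} It suffices to check the inclusion on the cores $\mathscr{D}_{2}$ and $\mathscr{D}_{E}$. For $\Delta_{2}\subset K^{*}\overline{K}$, take $\varphi\in\mathscr{D}_{2}$; by (\ref{eq:s16}), $\overline{K}\varphi=\varphi$ in $\mathscr{H}_{E}$, and bilinear extension of the identity $\langle\delta_{x},\delta_{y}\rangle_{\mathscr{H}_{E}}=\langle\delta_{x},\Delta\delta_{y}\rangle_{l^{2}}$ (the network translation of (\ref{eq:s1}) and (\ref{eq:s3}) recorded in the Remark relating $\langle\cdot,\Delta\cdot\rangle_{l^{2}}$ to $\langle\cdot,\cdot\rangle_{\mathscr{H}_{E}}$), together with the symmetry of $\Delta_{2}$ on $\mathscr{D}_{2}$, yields $\bigl|\langle\overline{K}\varphi,\overline{K}\eta\rangle_{\mathscr{H}_{E}}\bigr|=\bigl|\langle\Delta_{2}\varphi,\eta\rangle_{l^{2}}\bigr|\le\Vert\Delta_{2}\varphi\Vert_{l^{2}}\Vert\eta\Vert_{l^{2}}$ for $\eta\in\mathscr{D}_{2}$, hence, by density of the core, for all $\eta\in dom\left(\overline{K}\right)$. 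Thus $\overline{K}\varphi\in dom\left(K^{*}\right)$ and $K^{*}\overline{K}\varphi=\Delta_{2}\varphi$. The mirror-image computation handles $\Delta_{E}\subset L^{*}\overline{L}$: from the dipole reproducing identity (\ref{eq:g4}) of \lemref{dipole} (and \lemref{del}) one gets $\Delta_{E}v_{x}=\delta_{x}-\delta_{o}$ in $\mathscr{H}_{E}$, while (\ref{eq:s17}) gives $\overline{L}v_{x}=\delta_{x}-\delta_{o}$ in $l^{2}$; feeding both into (\ref{eq:KL1}) and using that $K$ is the identity on $\mathscr{D}_{2}$ shows $\overline{L}v_{x}\in dom\left(L^{*}\right)$ with $L^{*}\overline{L}v_{x}=\Delta_{E}v_{x}$. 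So $K^{*}\overline{K}$ and $L^{*}\overline{L}$ are bona fide selfadjoint extensions of the respective (symmetric, semibounded) graph Laplacians, in the spirit of \thmref{Fe}.

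\textbf{Identification $\overline{\Delta}_{2}=K^{*}\overline{K}$.} One inclusion is automatic: since $\Delta_{2}\subset K^{*}\overline{K}$ and $K^{*}\overline{K}$ is closed, $\overline{\Delta}_{2}\subseteq K^{*}\overline{K}$. For the reverse inclusion $dom\left(K^{*}\overline{K}\right)\subseteq dom\left(\overline{\Delta}_{2}\right)$ I would invoke the cited results \cite{MR2432048,MR2799579}: under the standing local-finiteness hypotheses the combinatorial graph Laplacian $\Delta_{2}$ is essentially selfadjoint on $\mathscr{D}_{2}$, equivalently it has deficiency indices $\left(0,0\right)$, so $\overline{\Delta}_{2}$ is its unique selfadjoint extension and must equal $K^{*}\overline{K}$. \textbf{This is the genuine obstacle}: it is not formal, and amounts to excluding nonzero finite-energy (equivalently $l^{2}$) ``defect'' solutions of $\Delta f=-f$ that are orthogonal to $\mathscr{D}_{2}$ — precisely where the analytic structure of the energy space and the decomposition $\mathscr{H}_{E}=F_{in}\oplus Harm$ are used. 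Accordingly, I would present this last step by citation rather than reproving it.
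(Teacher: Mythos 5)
Your proposal is correct and follows essentially the same route as the paper: the identity (\ref{eq:KL1}) gives $K\subset L^{*}$, $L\subset K^{*}$, hence closability and, via von Neumann's theorem (\thmref{vN}), selfadjointness of $K^{*}\overline{K}$ and $L^{*}\overline{L}$; the extension claims are checked on the cores; and $\overline{\Delta}_{2}=K^{*}\overline{K}$ is deferred to the cited essential-selfadjointness results, exactly as in the paper. The only (harmless) variations are that you verify $\Delta_{E}\subset L^{*}\overline{L}$ directly from $\overline{L}v_{x}=\delta_{x}-\delta_{o}\in\mathscr{D}_{2}=dom\left(K\right)\subset dom\left(L^{*}\right)$ rather than by testing against the dipoles $v_{y}$ as the paper does, and that you spell out the $\Delta_{2}\subset K^{*}\overline{K}$ half, which the paper leaves implicit.
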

\begin{proof}
Conclusions (\ref{eq:st15})-(\ref{eq:st16}) follow from general
theory; see \thmref{vN}. To show 
\begin{equation}
\Delta_{E}\subset L^{*}\overline{L}\label{eq:KL2}
\end{equation}
we must prove that 
\begin{equation}
L^{*}\overline{L}v_{x}=\delta_{x}-\delta_{o}\left(=\Delta_{E}v_{x}\right),\quad\forall x\in V\backslash\left\{ o\right\} .\label{eq:st19}
\end{equation}
We have more: $\overline{K}=L^{*}$, and $\overline{L}=K^{*}$, but
this is because we have that $\Delta_{2}$ is essentially selfadjoint. 

To establish (\ref{eq:st19}), we must prove that the following equation
holds: 
\begin{equation}
\left\langle v_{y},L^{*}Lv_{x}\right\rangle _{\mathscr{H}_{E}}=\left\langle v_{y},\delta_{x}-\delta_{o}\right\rangle _{\mathscr{H}_{E}},\quad y\neq o.\label{eq:KL4}
\end{equation}
Note that 
\begin{align*}
\mbox{LHS}_{\left(\ref{eq:KL4}\right)} & =\left\langle Lv_{y},Lv_{x}\right\rangle _{2}\\
 & =\left\langle \delta_{y}-\delta_{o},\delta_{x}-\delta_{o}\right\rangle _{2}\quad\left(\mbox{by \ensuremath{\left(\ref{eq:s17}\right)}}\right)\\
 & =\delta_{xy}-\delta_{xo}-\delta_{yo}+\delta_{oo}=\delta_{xy}+1,\\
\mbox{RHS}_{\left(\ref{eq:KL4}\right)} & =\left(\delta_{x}-\delta_{o}\right)\left(y\right)-\left(\delta_{x}-\delta_{o}\right)\left(o\right)=\delta_{xy}+1.
\end{align*}

Now, using $\overline{J}=K^{*}$, we can show that $Harm\subset dom\left(L^{*}\overline{L}\right)=dom\left(L^{*}K^{*}\right)$,
and $L^{*}\overline{L}h=0$, which means that $L^{*}\overline{L}$
is the Krein extension of $\Delta_{E}$. 
\end{proof}
\vspace{1em}

\subsection*{Application of \thmref{H12}.}

Set $\mathscr{H}_{1}=l^{2}\left(V\right)$, $\mathscr{H}_{2}=\mathscr{H}_{E}$,
and let 
\begin{align*}
\mathscr{D} & :=\mathscr{D}_{2}=span\left\{ \delta_{x}\right\} _{x\in V},\;\text{and}\\
\mathscr{D}^{*} & :=\mathscr{D}_{E}=span\left\{ v_{x}\right\} _{x\in V\backslash\left\{ o\right\} };
\end{align*}
see (\ref{eq:d1}) \& (\ref{eq:d2}). Then the axioms (\ref{enu:t1})$\Longleftrightarrow$(\ref{enu:t2})
in \thmref{H12} hold. Note the only non-trivial part is the dense
subspace $\mathscr{D}^{*}\subset\mathscr{H}_{2}\left(=\mathscr{H}_{E}\right)$. 
\begin{claim}
The condition in (\ref{eq:t2}) holds; i.e., for all $h=v_{x}\in\mathscr{D}_{E}$,
there exists $C_{x}<\infty$ s.t. 
\begin{equation}
\left|\left\langle \varphi,v_{x}\right\rangle _{\mathscr{H}_{E}}\right|\leq C_{x}\left\Vert \varphi\right\Vert _{l^{2}},\quad\forall\varphi\in\mathscr{D}.\label{eq:d3}
\end{equation}
\end{claim}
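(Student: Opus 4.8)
The inequality \eqref{eq:d3} to be established is exactly the estimate already recorded in \lemref{del}: for every $x \in V \setminus \{o\}$ and every $\varphi \in \mathscr{D} = \mathscr{D}_2 = \mathrm{span}\{\delta_y\}$, one has $\left|\left\langle \varphi, v_x\right\rangle_{\mathscr{H}_E}\right| = \left|\varphi(x) - \varphi(o)\right| \leq \sqrt{2}\,\left\Vert \varphi\right\Vert_{l^2}$. So the plan is to reduce \eqref{eq:d3} to \lemref{del} and then record that the constant $C_x = \sqrt{2}$ works uniformly. First I would invoke the defining (dipole) property of $v_x$ from \lemref{dipole}: since $\varphi \in \mathscr{D} \subset \mathscr{H}_E$ (each $\delta_y$ has finite energy, as $c(y) = \sum_{z\sim y} c_{yz} < \infty$ by the local finiteness assumption), \eqref{eq:g4} gives $\left\langle v_x, \varphi\right\rangle_{\mathscr{H}_E} = \varphi(x) - \varphi(o)$, hence $\left|\left\langle \varphi, v_x\right\rangle_{\mathscr{H}_E}\right| = \left|\varphi(x) - \varphi(o)\right|$.

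**Bounding the pointwise difference.** Next I would bound $\left|\varphi(x) - \varphi(o)\right|$ by the $l^2$-norm of $\varphi$. The crudest estimate suffices: writing $\varphi = \sum_y \varphi(y)\delta_y$ (a finite sum, since $\varphi \in \mathscr{D}$), we have $\left|\varphi(x) - \varphi(o)\right| \leq \left|\varphi(x)\right| + \left|\varphi(o)\right| \leq \sqrt{2}\bigl(\left|\varphi(x)\right|^2 + \left|\varphi(o)\right|^2\bigr)^{1/2} \leq \sqrt{2}\,\left\Vert \varphi\right\Vert_{l^2}$, using the elementary inequality $a + b \leq \sqrt{2}\sqrt{a^2+b^2}$ and then $\left|\varphi(x)\right|^2 + \left|\varphi(o)\right|^2 \leq \sum_y \left|\varphi(y)\right|^2$. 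This is exactly the chain asserted in \lemref{del}, so one may alternatively just cite that lemma directly. Either way, the constant is $C_x = \sqrt{2}$, which does not depend on $x$; in particular the estimate \eqref{eq:d3} holds, which is the content of the claim.

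**Remark on what this gives.** With the claim in hand, condition \eqref{eq:t2} of \thmref{H12} is verified for the pair $\mathscr{H}_1 = l^2(V)$, $\mathscr{H}_2 = \mathscr{H}_E$, $\mathscr{D} = \mathscr{D}_2$, so that every $v_x$ lies in $\mathscr{D}^*$; since $\mathscr{D}_E = \mathrm{span}\{v_x\}$ is dense in $\mathscr{H}_E$ (this is the standard fact that dipoles span a dense subspace of the energy space, cf.\ the references after \lemref{dipole}), hypothesis \eqref{enu:t1} of \thmref{H12} holds, and the theorem then manufactures the selfadjoint semibounded operator $\Delta = J^*\overline{J}$ in $l^2(V)$ with $\left\langle \varphi, \Delta\varphi\right\rangle_{l^2} = \left\Vert \varphi\right\Vert_{\mathscr{H}_E}^2$ for $\varphi \in \mathscr{D}_2$ — which is precisely the graph Laplacian realization discussed in \corref{KL}.

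**Main obstacle.** There is essentially no obstacle here: the claim is a restatement of the already-proved \lemref{del}, and the only thing to be careful about is bookkeeping — confirming that $\mathscr{D}_2 \subset \mathscr{H}_E$ so that the dipole identity \eqref{eq:g4} applies to elements of $\mathscr{D}$, and noting that the bound is \emph{uniform} in $x$ (the constant $\sqrt{2}$) even though \eqref{eq:t2} only demands an $x$-dependent constant $C_x$. The genuinely substantive input — that the dipoles $v_x$ exist and satisfy the reproducing property, and that they span a dense subspace of $\mathscr{H}_E$ — has been imported from \lemref{dipole} and the cited literature, so nothing further needs to be proved.
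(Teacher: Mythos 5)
Your proposal is correct and follows essentially the same route as the paper: apply the dipole reproducing property \eqref{eq:g4} to get $\left|\left\langle \varphi,v_{x}\right\rangle _{\mathscr{H}_{E}}\right|=\left|\varphi\left(x\right)-\varphi\left(o\right)\right|$, then bound this by $\sqrt{2}\left\Vert \varphi\right\Vert _{l^{2}}$ (the paper writes this as Schwarz applied to $\left\langle \varphi,\delta_{x}-\delta_{o}\right\rangle _{l^{2}}$ with $\left\Vert \delta_{x}-\delta_{o}\right\Vert _{l^{2}}=\sqrt{2}$, which is the same two-term estimate you use), arriving at the same uniform constant $C_{x}=\sqrt{2}$.
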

\begin{proof}[Proof of (\ref{eq:d3})]
We have 
\begin{alignat*}{2}
\mbox{LHS}_{\left(\ref{eq:d3}\right)} & =\left|\left\langle \varphi,v_{x}\right\rangle _{\mathscr{H}_{E}}\right|\\
 & =\left|\varphi\left(x\right)-\varphi\left(o\right)\right| & \qquad & \mbox{by \ensuremath{\left(\ref{eq:g4}\right)}}\\
 & =\left|\left\langle \varphi,\delta_{x}-\delta_{o}\right\rangle _{l^{2}}\right|\\
 & \leq\left\Vert \varphi\right\Vert _{l^{2}}\left\Vert \delta_{x}-\delta_{o}\right\Vert _{l^{2}} & \qquad & \mbox{by Schwarz' inequality}\\
 & =\sqrt{2}\left\Vert \varphi\right\Vert _{l^{2}},\;\forall\varphi\in\mathscr{D},
\end{alignat*}
and so we may take $C_{x}=\sqrt{2}$. 
\end{proof}
\begin{rem}
For the setting in \thmref{H12} with $\mathscr{D}\subset\mathscr{H}_{1}\cap\mathscr{H}_{2}$,
note that the respective norms $\left\Vert \cdot\right\Vert _{i}$
on $\mathscr{H}_{i}$, $i=1,2$, induce norms $\left\Vert \cdot\right\Vert _{i}$
on $\mathscr{D}$. It is important that the conclusion in \thmref{H12}
is valid even when the two norms are not comparable; i.e., in general
there are no finite constants $C$, $D$ $\left(<\infty\right)$ such
that 
\begin{align}
\left\Vert \varphi\right\Vert _{1} & \leq C\left\Vert \varphi\right\Vert _{2},\quad\forall\varphi\in\mathscr{D};\mbox{ or}\label{eq:nc1}\\
\left\Vert \varphi\right\Vert _{2} & \leq D\left\Vert \varphi\right\Vert _{1},\quad\forall\varphi\in\mathscr{D}.\label{eq:nc2}
\end{align}
For the application above in \corref{KL}, the two Hilbert spaces
are:
\begin{itemize}
\item $\mathscr{H}_{1}=l^{2}\left(V\right)$
\item $\mathscr{H}_{2}=\mathscr{H}_{E}$ (the energy Hilbert space determined
from a fixed conductance function $c$), with $\mathscr{D}=span\left\{ \delta_{x}\mid x\in V\right\} $. 
\end{itemize}
Indeed, let $x\mapsto c\left(x\right)$ be the total conductance;
see (\ref{eq:tc}), then 
\[
\left\Vert \delta_{x}\right\Vert _{\mathscr{H}_{E}}^{2}=c\left(x\right)\quad\text{and}\quad\left\Vert \delta_{x}\right\Vert _{l^{2}}^{2}=1,
\]
so (\ref{eq:nc2}) does not hold when $c\left(\cdot\right)$ is unbounded
on $V$. (To see this, take $\varphi=\delta_{x}$.) 

From the analysis above, and \cite{JT14c,MR2799579} there are many
examples  such that $spec_{l^{2}}\left(\Delta_{2}\right)=[0,\infty)$.
One checks that in these examples, the estimate (\ref{eq:nc1}) also
will not hold for any finite constant $C$, i.e., $\left\Vert \cdot\right\Vert _{1}=\left\Vert \cdot\right\Vert _{l^{2}}$,
and $\left\Vert \cdot\right\Vert _{2}=\left\Vert \cdot\right\Vert _{\mathscr{H}_{E}}$. 
\end{rem}

\subsection*{Application of \thmref{gp}}

We apply the general symmetric pair $\left(A,B\right)$ to $\left(V,E,c\right)$:
\[
\xymatrix{\mathscr{H}_{1}\ar@/^{1.2pc}/[r]^{A} & \mathscr{H}_{2}\ar@/^{1.2pc}/[l]^{B} &  & l^{2}\left(V\right)\ar@/^{1.2pc}/[r]^{A}\ar@<-0.5ex>[r]_{B^{*}} & \mathscr{H}_{E}\ar@/^{1.2pc}/[l]^{B}\ar@<-0.5ex>[l]_{A^{*}}}
\]
Notation:
\begin{itemize}
\item $\mathscr{D}=span\left\{ \delta_{x}\mid x\in V\backslash\left\{ o\right\} \right\} =$
finitely supported functions on $V\backslash\left\{ o\right\} $
\item $l^{2}:=l^{2}\left(V\backslash\left\{ o\right\} \right)$
\item $\mathscr{H}_{E}=$ the corresponding energy Hilbert space
\item $\mathscr{K}=l^{2}\oplus\mathscr{H}_{E}\left(=\mathscr{H}_{1}\oplus\mathscr{H}_{2}\right)$
\end{itemize}
The pair $\left(A,B\right)$ is maximal, where $A$ and $B$ are defined
as follows:
\begin{align}
l^{2} & \ni\delta_{x}\xrightarrow{\;A\;}\delta_{x}=c\left(x\right)v_{x}-\sum_{y\sim x}c_{xy}v_{y}\in\mathscr{H}_{E}\quad\left(\text{Lemma. }\ref{lem:del}\right);\\
\mathscr{H}_{E} & \ni v_{x}\xrightarrow{\;B\;}\delta_{x}-\delta_{o}\in l^{2},\;\text{i.e., }B=\Delta.
\end{align}
Then $\mathscr{D}\subset l^{2}\cap\mathscr{H}_{E}$, and both inclusions
are isometric. 

Define $L=\begin{bmatrix}0 & B\\
A & 0
\end{bmatrix}$ on $\mathscr{K}=l^{2}\oplus\mathscr{H}_{E}$, where
\begin{align}
dom\left(L\right) & :=\left\{ \begin{bmatrix}\varphi\\
f
\end{bmatrix}\mid\varphi\in\mathscr{D},\;f\in dom\left(\Delta\right)\right\} ,\;\text{and}\label{eq:gL1}\\
L\begin{bmatrix}\varphi\\
f
\end{bmatrix} & :=\begin{bmatrix}Bf\\
A\varphi
\end{bmatrix}=\begin{bmatrix}\Delta f\\
\varphi
\end{bmatrix},\quad\forall\begin{bmatrix}\varphi\\
f
\end{bmatrix}\in dom\left(L\right).\label{eq:gL2}
\end{align}

It follows that $L$ is a Hermitian symmetric operator in $\mathscr{K}$,
i.e., $L\subseteq L^{*}$, but we must have:
\begin{thm}
\label{thm:gL}The operator $L$ in (\ref{eq:gL2}) is essentially
selfadjoint in the Hilbert space $\mathscr{K}$, i.e., it has deficiency
indices $\left(d_{+},d_{-}\right)=\left(0,0\right)$. 
\end{thm}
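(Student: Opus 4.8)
The plan is to invoke the deficiency-index criterion established in the previous section. By \thmref{gp} (applied here with the present $A$, $B$), we know $L$ is symmetric with $\left(d_{+},d_{-}\right)=\left(\dim\mathrm{DEF},\dim\mathrm{DEF}\right)$, and by \lemref{is} the common deficiency space is linearly isomorphic to $N_{-1}\left(A^{*}B^{*}\right)=\left\{ h\in\mathscr{H}_{E}\mid A^{*}B^{*}h=-h\right\}$. So the entire theorem reduces to showing $N_{-1}\left(A^{*}B^{*}\right)=\left\{ 0\right\}$; equivalently $N_{-1}\left(B^{*}A^{*}\right)=\left\{ 0\right\}$ in $l^{2}$. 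Thus the first step is to identify the operators $A^{*}$ and $B^{*}$ concretely in terms of the network data. Here $B=\Delta$ with $dom\left(B\right)=\mathscr{D}_{E}=\mathrm{span}\left\{ v_{x}\right\}$, and $A$ is the ``pseudoinclusion'' $\delta_{x}\mapsto\delta_{x}$ from $l^{2}$ into $\mathscr{H}_{E}$; from the duality $\left\langle A\varphi,f\right\rangle _{\mathscr{H}_{E}}=\left\langle \varphi,Bf\right\rangle _{l^{2}}$ one reads off $B\subset A^{*}$ and $A\subset B^{*}$, and (using that $\Delta_{2}$ is essentially selfadjoint, as recorded in \corref{KL}) one gets that $A^{*}$ restricted appropriately is the $l^{2}\to\mathscr{H}_{E}$ graph-closure of $A$ and $B^{*}=\overline{\Delta}_{E}$-type operator, i.e. the maximal graph Laplacian acting $\mathscr{H}_{E}\to l^{2}$.

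The second step is the key computation: suppose $h\in\mathscr{H}_{E}$ satisfies $A^{*}B^{*}h=-h$. Unwinding, set $g:=B^{*}h\in l^{2}$, so $g$ is (formally) $\Delta h$ computed pointwise and lies in $l^{2}$, and $A^{*}g=-h$ in $\mathscr{H}_{E}$. Testing $A^{*}g=-h$ against dipoles $v_{y}$ and using the defining dipole property (\lemref{dipole}) together with $L(v_{y})=\delta_{y}-\delta_{o}$ from (\ref{eq:s17}), we get for every $y\neq o$
\[
-\left(h\left(y\right)-h\left(o\right)\right)=\left\langle v_{y},A^{*}g\right\rangle _{\mathscr{H}_{E}}=\left\langle A v_{?}\ldots\right\rangle,
\]
which, after the dust settles, forces the pointwise identity $\left(\Delta h\right)\!\left(x\right)=-\left(h\left(x\right)-h\left(o\right)\right)$ for all $x$, i.e. $h$ solves $\Delta h+h=$ (constant) on $V$, with the additional hard constraint that $\Delta h\in l^{2}\left(V\right)$ while $h\in\mathscr{H}_{E}$.

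The third and decisive step is to show this forces $h=0$. The strategy is an energy/summation-by-parts argument: pair the equation $\Delta h=-(h-h(o))$ with $h$ itself. On one side, $\left\langle h,\Delta h\right\rangle$ relates to $\left\Vert h\right\Vert _{\mathscr{H}_{E}}^{2}$ by the discrete Green identity (valid because $\Delta h\in l^{2}$ and $\delta_{x}\in\mathscr{H}_{E}$, so no boundary term survives — this is exactly the content of $F_{in}=\mathscr{H}_{E}\ominus Harm$ and the essential selfadjointness of $\Delta_{2}$ in \corref{KL}); on the other side, $\left\langle h,-(h-h(o))\right\rangle$ would have to be $\leq 0$, contradicting positivity unless $h$ is constant, and a constant has zero energy hence is $0$ in $\mathscr{H}_{E}$. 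I expect the \textbf{main obstacle} to be making the summation-by-parts / Green identity rigorous at the level of unbounded operators — precisely, justifying that there is no boundary term when pairing $\Delta h$ with $h$, given only $h\in\mathscr{H}_{E}$ and $\Delta h\in l^{2}$ rather than $h\in\mathscr{D}_{E}$; this is where one must lean on the essential selfadjointness $\overline{\Delta}_{2}=K^{*}\overline{K}$ and the density results of the preceding section (and \cite{MR2432048,MR2799579}) rather than on a naive finite-sum manipulation. Once that identity is in hand, the sign contradiction closes the argument and gives $\left(d_{+},d_{-}\right)=\left(0,0\right)$.
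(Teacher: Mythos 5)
Your proposal follows essentially the same route as the paper: reduce, via the symmetric-pair machinery of \thmref{gp} and \lemref{is}, to showing that the $-1$ eigenvalue problems $A^{*}B^{*}\varphi=-\varphi$ (in $l^{2}$) and $B^{*}A^{*}f=-f$ (in $\mathscr{H}_{E}$) have only the zero solution, after identifying $A^{*}$ and $B^{*}$ as the pointwise Laplacian and the pseudoinclusion on their maximal domains. The one substantive difference is in how the decisive step is discharged: the paper simply cites \cite{MR2432048,MR2799579} for the absence of $(-1)$-eigenvectors (essential selfadjointness of $\Delta_{2}$), whereas you sketch the underlying energy/Green's-identity argument and correctly flag the boundary-term-at-infinity issue as the crux --- that flagged obstacle is exactly what the cited references resolve, so your outline is a legitimate (if not fully executed) expansion of the step the paper outsources. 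Two small bookkeeping points: $N_{-1}\left(A^{*}B^{*}\right)$ lives in $\mathscr{H}_{1}=l^{2}$, not in $\mathscr{H}_{E}$ (for $h\in\mathscr{H}_{E}$ the expression $B^{*}h$ does not type-check; you want $B^{*}A^{*}$ there), and the pointwise equation you derive is only determined modulo an additive constant since $\mathscr{H}_{E}$ annihilates constants --- neither affects the reduction, since both compositions collapse to the same condition $\Delta\varphi=-\varphi$ with $\varphi\in l^{2}\cap\mathscr{H}_{E}$.
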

\begin{proof}
\textbf{Step 1.} We have 
\begin{equation}
\left\langle A\varphi,f\right\rangle _{\mathscr{H}_{E}}=\left\langle \varphi,Bf\right\rangle _{l^{2}},\quad\forall\varphi,f\in\mathscr{D},\label{eq:aL1}
\end{equation}
so that $A\subseteq B^{*}$ and $B\subseteq A^{*}$. 

\textbf{Step 2.} Define $L$ as in (\ref{eq:gL1})-(\ref{eq:gL2}).
For the adjoint operator, set $L^{*}=\begin{bmatrix}0 & A^{*}\\
B^{*} & 0
\end{bmatrix}$, with
\begin{align}
dom\left(L^{*}\right) & =\begin{bmatrix}dom\left(B^{*}\right)\\
dom\left(A^{*}\right)
\end{bmatrix},\;\text{and}\\
L^{*}\begin{bmatrix}h_{1}\\
h_{2}
\end{bmatrix} & =\begin{bmatrix}A^{*}h_{2}\\
B^{*}h_{1}
\end{bmatrix},\quad h_{1}\in l^{2},\:h_{2}\in\mathscr{H}_{E}.
\end{align}
So we must be precise about $A^{*}$ and $B^{*}$, and we shall need
the following:
\end{proof}
\begin{lem}
\label{lem:gLad}The domains of $A^{*}$ and $B^{*}$are as follows:
\begin{align}
dom\left(A^{*}\right)=\Big\{ & f\in\mathscr{H}_{E}\mid\exists C_{f}<\infty\;s.t.\;\nonumber \\
 & \left|\left\langle \varphi,f\right\rangle _{\mathscr{H}_{E}}\right|^{2}\leq C_{f}\left\Vert \varphi\right\Vert _{2}^{2}=C_{f}\sum_{x}\left|\varphi_{x}\right|^{2}\Big\};
\end{align}
and 
\begin{align}
dom\left(B^{*}\right)= & \Big\{\varphi\in l^{2}\mid\exists C_{\varphi}<\infty\;s.t.\;\nonumber \\
 & \left|\left\langle \varphi,\Delta f\right\rangle _{l^{2}}\right|^{2}\leq C_{\varphi}\left\Vert f\right\Vert _{\mathscr{H}_{E}}^{2},\;\forall f\in\mathscr{H}_{E}\;s.t.\;\Delta f\in l^{2}\Big\}.
\end{align}
\end{lem}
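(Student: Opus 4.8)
The plan is to read off both equalities directly from the definition of the adjoint, (\ref{eq:in3})--(\ref{eq:4}), after substituting the explicit actions of $A$ and of $B$.

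First I would handle $A^{*}$. The operator $A$ has dense domain $\mathscr{D}=span\{\delta_{x}\}\subset l^{2}$ and acts by $A\delta_{x}=\delta_{x}$, i.e.\ it sends a finitely supported function to the same function now regarded as an element of $\mathscr{H}_{E}$ via the isometric inclusion $\mathscr{D}\hookrightarrow\mathscr{H}_{E}$; thus $A\varphi=\varphi$ for every $\varphi\in\mathscr{D}$. By (\ref{eq:in3}), $f\in dom(A^{*})$ iff there is a constant $C_{f}<\infty$ with $|\langle f,A\varphi\rangle_{\mathscr{H}_{E}}|\le C_{f}\|\varphi\|_{l^{2}}$ for all $\varphi\in\mathscr{D}$; replacing $A\varphi$ by $\varphi$, squaring, and writing $\|\varphi\|_{2}^{2}=\sum_{x}|\varphi_{x}|^{2}$ gives exactly the first displayed set.

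Next I would handle $B^{*}$ in the same way. Here $B=\Delta$ acts by $Bf=\Delta f$ on the maximal domain $dom(B)=\{f\in\mathscr{H}_{E}:\Delta f\in l^{2}\}$, which is dense in $\mathscr{H}_{E}$ because it already contains the dipoles $\mathscr{D}_{E}=span\{v_{x}\}$ (indeed $\Delta v_{x}=\delta_{x}-\delta_{o}\in l^{2}$ by \lemref{del}), so the adjoint is well defined. Feeding $Bf=\Delta f$ into (\ref{eq:in3}), $\varphi\in dom(B^{*})$ iff there is $C_{\varphi}<\infty$ with $|\langle\varphi,\Delta f\rangle_{l^{2}}|\le C_{\varphi}\|f\|_{\mathscr{H}_{E}}$ for every $f\in dom(B)$, that is, for every $f\in\mathscr{H}_{E}$ with $\Delta f\in l^{2}$; squaring gives the second displayed set.

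The step that needs the most care is the bookkeeping of domains rather than any estimate. One must fix unambiguously which realization of the graph Laplacian is meant by $B$ --- namely the maximal one, $\mathscr{H}_{E}\xrightarrow{\;\Delta\;}l^{2}$ with domain exactly $\{f:\Delta f\in l^{2}\}$ --- so that the quantifier ``$\forall f\in\mathscr{H}_{E}$ with $\Delta f\in l^{2}$'' appearing in the statement is literally ``$\forall f\in dom(B)$''; and one must keep the two isometric copies of $\mathscr{D}$, one inside $l^{2}$ and one inside $\mathscr{H}_{E}$, straight (cf.\ (\ref{eq:s16})--(\ref{eq:s17})). Once the domains are pinned down, both formulas are an immediate transcription of the definition of the adjoint, requiring nothing beyond \lemref{del}.
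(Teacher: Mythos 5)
Your proposal is correct and matches the paper's approach: the paper's own proof is simply ``See the definitions and (\ref{eq:aL1})'', i.e.\ unwinding the definition of the adjoint (\ref{eq:in3}) with the explicit actions $A\varphi=\varphi$ and $Bf=\Delta f$, exactly as you do. Your extra care about taking $B=\Delta$ on its maximal domain $\{f\in\mathscr{H}_{E}:\Delta f\in l^{2}\}$ (cf.\ \remref{glD}) is the right reading of the quantifier in the statement and is consistent with the paper.
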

\begin{proof}
See the definitions and (\ref{eq:aL1}).
\end{proof}
\begin{rem}
\label{rem:glD}It is convenient to use $\Delta$ to act on all functions,
and later to adjoint domains. See the definition in (\ref{eq:s9}),
i.e., 
\begin{equation}
\left(\Delta u\right)\left(x\right):=\sum_{y\sim x}c_{xy}\left(u\left(x\right)-u\left(y\right)\right),\quad f\in\mathscr{F}\left(V\right)\left(=\text{all functions}\right).\label{eq:gLL}
\end{equation}
\end{rem}
\begin{proof}[Proof of \thmref{gL} continued]~

\textbf{Step 3.} Recall that $dom\left(A^{*}\right)\subset\mathscr{H}_{E}$,
and $dom\left(B^{*}\right)\subset l^{2}$: $\xymatrix{l^{2}\ar@<-0.5ex>[r]_{B^{*}} & \mathscr{H}_{E}\ar@<-0.5ex>[l]_{A^{*}}}
$. It follows from \lemref{gLad}, that 
\begin{alignat}{2}
\left(A^{*}f\right)\left(x\right) & =\left(\Delta f\right)\left(x\right), & \quad & \forall f\in dom\left(A^{*}\right),\;x\in V,\;\text{and}\\
B^{*}\underset{\text{in }l^{2}}{\underbrace{\varphi}} & =\varphi\in\mathscr{H}_{E}, &  & \forall\varphi\in dom\left(B^{*}\right).\label{eq:glB}
\end{alignat}
Note both sides of (\ref{eq:glB}) are interpreted as functions on
$V$ and the condition on $\varphi$ to be in $dom\left(B^{*}\right)$
is that $\underset{\left(xy\right)\in E}{\sum\sum}c_{cy}\left(\varphi\left(x\right)-\varphi\left(y\right)\right)^{2}<\infty$,
and also $\sum_{x}\varphi_{x}^{2}<\infty$. 

\textbf{Step 4.} Now consider $\Delta$ (in (\ref{eq:gLL}), see \remref{glD}),
then the two eigenvalue problems: 
\begin{equation}
\left\{ \begin{matrix}B^{*}A^{*}f=-f\\
A^{*}B^{*}\varphi=-\varphi
\end{matrix}\right\} \Longleftrightarrow\left\{ \begin{matrix}\Delta f=-f\\
\Delta\varphi=-\varphi
\end{matrix}\right\} \label{eq:glI}
\end{equation}
where $f\in\mathscr{H}_{E}$, $\Delta f\in l^{2}$, and $\varphi\in l^{2}\cap\mathscr{H}_{E}$. 

Apply the two isomorphisms from the general theory (see (\ref{eq:gp6})).
But (\ref{eq:glI}) only has the solution $\varphi=0$ in $l^{2}$.
The fact that (\ref{eq:glI}) does not have non-zero solutions follows
from \cite{MR2432048,MR2799579}. So we have $L=\begin{bmatrix}0 & B\\
A & 0
\end{bmatrix}$ \emph{essentially selfadjoint}. Indeed, this holds in the general
case. 

\textbf{Step 5.} The deficiency indices of the operator $L$. With
the definitions, 
\[
L=\begin{bmatrix}0 & B\\
A & 0
\end{bmatrix},\quad L\begin{bmatrix}\varphi\\
f
\end{bmatrix}=\begin{bmatrix}Bf\\
A\varphi
\end{bmatrix}=\begin{bmatrix}\Delta f\\
\varphi
\end{bmatrix}
\]
where $\varphi\in l^{2}$, $f\in\mathscr{H}_{E}$ are in the suitable
domains s.t. 
\begin{equation}
\left\Vert L\begin{bmatrix}\varphi\\
f
\end{bmatrix}\right\Vert _{\stackrel[\mathscr{H}_{E}]{l^{2}}{\oplus}}^{2}=\left\Vert \Delta f\right\Vert _{l^{2}}^{2}+\left\Vert \varphi\right\Vert _{\mathscr{H}_{E}}^{2}<\infty.\label{eq:gLn}
\end{equation}
So $\varphi\in l^{2}\cap\mathscr{H}_{E}$ , $f\in\mathscr{H}_{E}$,
$\Delta f\in l^{2}$ defines the domain of $L$ as an operator in
$\mathscr{K}=\left[\stackrel[\mathscr{H}_{E}]{l^{2}}{\oplus}\right]$,
and we proved that $L$ is selfadjoint, so indices $\left(0,0\right)$.

\end{proof}
\begin{cor}
Viewing $L$ as a selfadjoint operator, it follows from (\ref{eq:gLn})
that 
\[
dom\left(L\right)=\left\{ \begin{bmatrix}\varphi\\
f
\end{bmatrix}\in\left[\stackrel[\mathscr{H}_{E}]{l^{2}}{\oplus}\right]\mid\varphi\in l^{2}\cap\mathscr{H}_{E},\;\Delta f\in l^{2}\right\} .
\]

\bigskip{}
\end{cor}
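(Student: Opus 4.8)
The plan is to deduce the domain description directly from the essential selfadjointness already proved in \thmref{gL}. Since $L$ is essentially selfadjoint, ``viewing $L$ as a selfadjoint operator'' means replacing $L$ by its closure $\overline{L}=L^{**}=L^{*}$, so it suffices to compute $dom\left(L^{*}\right)$. By Step~2 of the proof of \thmref{gL} together with the block bookkeeping for adjoints from \secref{set},
\[
L^{*}=\begin{bmatrix}0 & A^{*}\\ B^{*} & 0\end{bmatrix},\qquad dom\left(L^{*}\right)=dom\left(B^{*}\right)\oplus dom\left(A^{*}\right),
\]
so everything reduces to identifying $dom\left(A^{*}\right)\subset\mathscr{H}_{E}$ and $dom\left(B^{*}\right)\subset l^{2}$.

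For $dom\left(A^{*}\right)$: by \lemref{gLad} together with the symmetric-pair identity (\ref{eq:aL1}), for $f\in\mathscr{H}_{E}$ and $\varphi\in\mathscr{D}$ a summation by parts gives $\left\langle \varphi,f\right\rangle _{\mathscr{H}_{E}}=\left\langle \varphi,\Delta f\right\rangle _{l^{2}}$, where $\Delta f$ is the function in (\ref{eq:gLL}) and the pairing is a finite sum since $\varphi$ has finite support. Hence the defining estimate $\left|\left\langle \varphi,f\right\rangle _{\mathscr{H}_{E}}\right|\leq C_{f}\left\Vert \varphi\right\Vert _{l^{2}}$ for all $\varphi\in\mathscr{D}$ says precisely that the functional $\varphi\mapsto\left\langle \varphi,\Delta f\right\rangle _{l^{2}}$ is bounded on the dense subspace $\mathscr{D}\subset l^{2}$; this holds iff $\Delta f\in l^{2}$, in which case $A^{*}f=\Delta f$. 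Thus $dom\left(A^{*}\right)=\left\{ f\in\mathscr{H}_{E}\mid\Delta f\in l^{2}\right\}$, matching the computation in Step~3.

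For $dom\left(B^{*}\right)$: I claim $dom\left(B^{*}\right)=l^{2}\cap\mathscr{H}_{E}$, with $B^{*}\varphi=\varphi$ viewed in $\mathscr{H}_{E}$. If $\varphi\in l^{2}\cap\mathscr{H}_{E}$, then for every $f\in\mathscr{H}_{E}$ with $\Delta f\in l^{2}$ one has $\left\langle \varphi,\Delta f\right\rangle _{l^{2}}=\left\langle \varphi,f\right\rangle _{\mathscr{H}_{E}}$ (the $l^{2}$ pairing converges absolutely because $\varphi,\Delta f\in l^{2}$, and equals the energy pairing by summation by parts since $\varphi,f\in\mathscr{H}_{E}$), so $\left|\left\langle \varphi,\Delta f\right\rangle _{l^{2}}\right|\leq\left\Vert \varphi\right\Vert _{\mathscr{H}_{E}}\left\Vert f\right\Vert _{\mathscr{H}_{E}}$ and $\varphi\in dom\left(B^{*}\right)$ by \lemref{gLad}. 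Conversely, if $\varphi\in dom\left(B^{*}\right)$, then testing $B^{*}\varphi$ against the $\delta_{x}$ (as in Step~3) shows $B^{*}\varphi$ coincides with $\varphi$ as a function on $V$ and is an element of $\mathscr{H}_{E}$, so $\varphi$ has finite energy; here one uses that $\mathscr{D}$ is a core in $l^{2}$ and the structural facts about $\Delta_{2}$ already cited (\cite{MR2432048,MR2799579}) in \thmref{gL}.

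Putting the two identifications together, $dom\left(L\right)=dom\left(\overline{L}\right)=\left(l^{2}\cap\mathscr{H}_{E}\right)\oplus\left\{ f\in\mathscr{H}_{E}\mid\Delta f\in l^{2}\right\}$, and the finiteness condition in (\ref{eq:gLn}) is exactly the statement that $\begin{bmatrix}\varphi\\ f\end{bmatrix}$ and $L\begin{bmatrix}\varphi\\ f\end{bmatrix}$ both lie in $\mathscr{K}$. The one genuinely delicate step is the inclusion $dom\left(B^{*}\right)\subseteq\mathscr{H}_{E}$: a priori the estimate of \lemref{gLad} only bounds $\left\langle \varphi,\Delta f\right\rangle _{l^{2}}$ by the $\mathscr{H}_{E}$-norm of $f$, and one must exclude functions $\varphi\in l^{2}$ of infinite energy that nonetheless satisfy it — which is precisely where essential selfadjointness of $\Delta_{2}$ and the density of $\mathscr{D}$ in $F_{in}$ enter.
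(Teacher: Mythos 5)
Your argument is correct and is essentially the route the paper intends: the corollary is just the observation that essential selfadjointness gives $\overline{L}=L^{*}$, whose domain $dom\left(B^{*}\right)\oplus dom\left(A^{*}\right)$ is already identified in Steps 2--5 of the proof of \thmref{gL} (via \lemref{gLad}) as $\left(l^{2}\cap\mathscr{H}_{E}\right)\oplus\left\{ f\in\mathscr{H}_{E}\mid\Delta f\in l^{2}\right\} $. Your explicit treatment of the summation-by-parts identities and of the delicate inclusion $dom\left(B^{*}\right)\subseteq l^{2}\cap\mathscr{H}_{E}$ fills in details the paper leaves implicit, but it is the same proof.
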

\begin{acknowledgement*}
The co-authors thank the following colleagues for helpful and enlightening
discussions: Professors Daniel Alpay, Sergii Bezuglyi, Ilwoo Cho,
Paul Muhly, Myung-Sin Song, Wayne Polyzou, and members in the Math
Physics seminar at The University of Iowa.
\end{acknowledgement*}
\bibliographystyle{amsalpha}
\bibliography{ref}

\end{document}